\documentclass[12pt,reqno]{amsart}
\usepackage{amssymb, latexsym, color}

\parskip .5ex
\textwidth = 17truecm \textheight = 22truecm \topmargin = -0.5truecm\voffset=-0.5cm \hoffset=-2.5cm
\baselineskip=13truept
\parindent=0.6truecm
\parskip=0truept

\newcommand{\beqa}{\begin{eqnarray*}}
\newcommand{\eeqa}{\end{eqnarray*}}
\newcommand{\beqn}{\begin{eqnarray}}
\newcommand{\eeqn}{\end{eqnarray}}

\newcommand{\bQ}{\mathbb Q}

\newcommand{\D}{\mathbb D}
\newcommand{\G}{\mathbb G}
\newcommand{\R}{\mathbb R}

\newcommand{\N}{\mathbb N}

\newcommand{\mcA}{\mathcal A}

\newcommand{\mcB}{\mathcal B}

\newcommand{\mcD}{\mathcal D}
\newcommand{\mcE}{\mathcal E}
\newcommand{\mcF}{\mathcal F}
\newcommand{\mcP}{\mathcal P}

\newcommand{\mcS}{\mathcal S}

\newcommand{\mfA}{\mathfrak A}
\newcommand{\mfB}{\mathfrak B}

\newcommand{\mfD}{\mathfrak D}

\newcommand{\mfU}{\mathfrak U}

\newcounter{cnt1}
\newcounter{cnt2}
\newcounter{cnt3}
\newcommand{\blr}{\begin{list}{$($\roman{cnt1}$)$}
 {\usecounter{cnt1} \setlength{\topsep}{0pt}
 \setlength{\itemsep}{0pt}}}
\newcommand{\bla}{\begin{list}{$($\alph{cnt2}$)$}
 {\usecounter{cnt2} \setlength{\topsep}{0pt}
 \setlength{\itemsep}{0pt}}}
\newcommand{\bln}{\begin{list}{$($\arabic{cnt3}$)$}
 {\usecounter{cnt3} \setlength{\topsep}{0pt}
 \setlength{\itemsep}{0pt}}}
\newcommand{\el}{\end{list}}
\newtheorem{thm}{Theorem}[section]
\newtheorem{lem}[thm]{Lemma}
\newtheorem{cor}[thm]{Corollary}
\newtheorem{ex}[thm]{Example}

\newtheorem{Def}[thm]{Definition}
\newtheorem{prop}[thm]{Proposition}
\newtheorem{rem}[thm]{Remark}
\newcommand{\Rem}{\begin{rem} \rm}
\newcommand{\bdfn}{\begin{Def} \rm}
\newcommand{\edfn}{\end{Def}}

\newcommand{\ba}{\begin{array}}
\newcommand{\ea}{\end{array}}

\sloppy
\begin{document}
			\begin{center}\large{{\bf{HK-Sobolev spaces  $W{S^{k,p}}$  and Bessel Potential}}} 
		\vspace{0.05cm}
		
		Bipan Hazarika$^{a}$ \footnote{Corresponding author} and  Hemanta Kalita$^a$
		
				\vspace{0.05cm}
				$^a$Department of Mathematics, Gauhati University, Guwahati 781014,\\ Assam, India\\	
	Email :  bh\_rgu@yahoo.co.in; bh\_gu@gauhati.ac.in; 
 hemanta30kalita@gmail.com\\
	
\end{center}
	\title{}
	\author{}
	\thanks{\today} 
	\begin{abstract} 
Our goal in this article is to construct HK-Sobolev  spaces on $\R^\infty$ which contains Sobolev spaces as dense embedding. We show that weakly convergent sequences in Sobolev spaces are strongly convergent in HK-Sobolev  spaces.  Also, we obtain that    the Sobolev space through Bessel potential is densely contained in HK-Sobolev  spaces. Finally we find sufficient  conditions for the solvability of the divergence equation  $\nabla\cdot F= f,$ when $f$ is an element of the subspace $K{S^p}[\R_I^n]$  of the HK-Sobolev  space $WS^{k,p}[\R_I^n] $  with the help  of the Fourier transformation.\\
		\noindent{\footnotesize {\bf{Keywords and phrases:}}}  Sobolev space; continuous embedding;  Kuelbs-Steadman space.\\
		AMS Subject Classification No(2020): 26A39, 46B25, 46E35, 46E39, 46F25.
	\end{abstract}
	\maketitle
	
	
	\pagestyle{myheadings}
	\markboth{\rightline {\scriptsize  HK}}
	{\leftline{\scriptsize  HK-Sobolev  space\dots }}
	
	\maketitle


\section{Introduction and Preliminaries} 
One of the most important problems of mathematical physics in the 20th century was  to find the solution to Dirichlet and Neuman problems for Laplace equation  (see for instance \cite{WM}). This problem  attracted  famous scientists of that period, namely  Hilbert, Courant, Weyl and many more. Russian Mathematician Sergei Sobolev in 1930 overcame the main difficulty of this problem and introduced a functional space  called Sobolev space, given by functions in $L^p[\R^n]$ whose distributional derivatives of order upto to $k $ exist and are  in $L^p[\R^n].$ Today there are many information about  Sobolev spaces $W^{k,p}[\R^n]$, where $p>1 $ and $k=0,1,2, \dots$,  (see \cite{JH,GL,WM,MV}).  
In  \cite{GZ}, Gill and Zachary  built the $KS^p$-spaces (Kuelbs-Steadman spaces). See also \cite{VS}. The Kuelbs-Steadman spaces $K{S^p}[\R^n]$ were introduced to cover Feynman path integral formulation, an alternative approach to Quantum Mechanics. This spaces have been useful in this approach since they contain Henstock-Kurzweil integrable functions, that are fundamental in order to prove the convergent of highly oscillatory integrals that appear in Feynman approach. Also the space $K{S^p}[\R^n] $ is the completion of $L^p[\R^n]$ for $1 \leq p \leq \infty.$ Even more interesting is that $L^p[\R^n]$ is continuous densely contained in $K{S^p}[\R^n]$ and this spaces contains the spaces of distribution as dense subset.  Gill and Myers   \cite{GM} discussed about   a new theory of Lebesgue measure on $\R^\infty;$ the construction of which is virtually the same as the development of Lebesgue measure on $\R^n.$ This theory can be useful in formulating a new class of spaces which will provide a Banach spaces  structure for Henstock-Kurzweil (HK) integrable functions. For details of Henstock-Kurzweil integral (in short HK integral) the readers can see  \cite{PrezBecerra,PrezBecerra1,SB,AB,RA,BH,RH, SnchezPerales,SnchezPeralesTorres,VST,VST1, CS,TY}.  Motivated by the concept of \cite{GZ}, the fact $L^p \subset K{S^p}$ as continuous dense embedding and parallel approach of $\R^n $ and $\R^\infty$ (see \cite{GM, YY}), we introduce our function spaces $W{S^{k,p}}[\R_I^n]$, $W{S^{k,p}}[\R_I^\infty]$, which we will call as HK-Sobolev  spaces.

\begin{Def}\cite{GM,YY}
Let $\mathcal{B}[\mathbb{R}^n]$ be the Borel $\sigma$-algebra for $\mathbb{R}^n$,  $I=[-\frac{1}{2},\frac{1}{2}]$ and $I_n=\prod_{i=n+1}^\infty I$. For $\mfA\in\mathcal{B}[\mathbb{R}^n]$  the set $ \mfA_n = \mfA \times I_n$ is called $n^{th}$ order box set in $ \mathbb{R}^\infty$. We define 
   \begin{enumerate}
   \item $ \mfA_n \cup \mfB_n =(\mfA\cup \mfB) \times I_n;$
   \item $ \mfA_n \cap \mfB_n = (\mfA \cap \mfB ) \times I_n; $
   \item $ \mfB_{n}^c = \mfB^c \times I_n.$
   \end{enumerate}
   \end{Def}
 \begin{Def}
  \cite{GM,YY} Define $\mathbb{ R}_{I}^{n} =\mathbb{ R}^n \times I_n.$  We denote $ \mcB[\mathbb{R}_{I}^{n}]$ to be the Borel $\sigma$-algebra for $ \mathbb{R}_{I}^{n},$ where the topology for $ \mathbb{R}_{I}^{n} $ is defined via the class of open sets $ \mfD_n = \{ \mfU \times I_n : \mfU $ is open in $ \mathbb{R}^n\}.$ For any $ \mfA \in \mcB[\mathbb{R}^n], $ we define $\lambda_\infty(\mfA_n) $ on $ \mathbb{R}_{I}^{n} $ by product measure $ \lambda_\infty(\mfA_n) = \lambda_n(\mfA) \times \Pi_{i= n +1 }^{\infty} \lambda_{1}(I) = \lambda_n(\mfA),$ where $\lambda_n$ is Lebesgue measure on $\mathbb{R}^n$.
  \end{Def}
   \begin{thm} \cite{GZ,GM}
 $ \lambda_{\infty}(.) $ is a measure on $ \mcB[\mathbb{R}_{I}^{n}],$ which  is equivalent to $n$-dimensional Lebesgue measure on $ \mathbb{R}^n.$   
 \end{thm}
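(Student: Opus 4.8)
The plan is to transport all the measure-theoretic structure of $n$-dimensional Lebesgue measure $\la_n$ on $\R^n$ to $\R_I^n$ through the box map $\Phi\colon \mfA \mapsto \mfA_n = \mfA \times I_n$, and to read off both assertions — that $\la_\infty$ is a measure and that it is equivalent to $\la_n$ — from the fact that $\Phi$ is a Boolean, and in fact a $\sigma$-algebra, isomorphism of $\mcB[\R^n]$ onto $\mcB[\R_I^n]$.

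First I would verify that $\Phi$ is a bijection from $\mcB[\R^n]$ onto $\mcB[\R_I^n]$ respecting all the set operations. The three identities in the first Definition say precisely that $\Phi$ commutes with finite unions, finite intersections, and complementation; since $\Phi(\bu_k \mfA^{(k)}) = (\bu_k \mfA^{(k)}) \times I_n = \bu_k \Phi(\mfA^{(k)})$ by distributivity of the product over arbitrary unions, $\Phi$ is a $\sigma$-homomorphism. By definition the topology on $\R_I^n$ is generated by the open sets $\mfU \times I_n = \Phi(\mfU)$ with $\mfU$ open in $\R^n$, so the image $\Phi(\mcB[\R^n])$ is a $\sigma$-algebra that contains every open set of $\R_I^n$ and is contained in $\mcB[\R_I^n]$; conversely it is itself generated by those open sets, whence $\Phi(\mcB[\R^n]) = \mcB[\R_I^n]$. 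Injectivity of $\Phi$ follows from $I_n \neq \es$, since $\mfA \times I_n = \mfB \times I_n$ then forces $\mfA = \mfB$. This simultaneously shows that every element of $\mcB[\R_I^n]$ carries a unique Borel base $\mfA \ci \R^n$, so that $\la_\infty(\mfA_n) := \la_n(\mfA)$ is unambiguously defined.

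Next I would check the measure axioms for $\la_\infty$ by pulling them back through $\Phi$. Nonnegativity and $\la_\infty(\es) = \la_n(\es) = 0$ are immediate. For countable additivity, let $\{(\mfA^{(k)})_n\}_{k\ge 1}$ be pairwise disjoint in $\mcB[\R_I^n]$. The intersection identity gives $(\mfA^{(j)})_n \cap (\mfA^{(k)})_n = (\mfA^{(j)} \cap \mfA^{(k)})_n$, which is empty precisely when $\mfA^{(j)} \cap \mfA^{(k)} = \es$ (again using $I_n \neq \es$), so the bases $\mfA^{(k)}$ are pairwise disjoint in $\R^n$. Then the union identity together with the $\sigma$-additivity of $\la_n$ yields
\[
\la_\infty\Big(\bu_k (\mfA^{(k)})_n\Big) = \la_n\Big(\bu_k \mfA^{(k)}\Big) = \sum_k \la_n(\mfA^{(k)}) = \sum_k \la_\infty\big((\mfA^{(k)})_n\big),
\]
so $\la_\infty$ is a measure on $\mcB[\R_I^n]$.

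Finally, the equivalence with $\la_n$ is essentially built into the definition: the tail product is $\prod_{i=n+1}^{\iy}\la_I(I) = 1$ under the Gill--Myers normalization $\la_I(I)=1$, so $\la_\infty \circ \Phi = \la_n$ identically on $\mcB[\R^n]$. Combined with the first step, $\Phi$ is a measure-preserving $\sigma$-algebra isomorphism of $(\R^n,\mcB[\R^n],\la_n)$ onto $(\R_I^n,\mcB[\R_I^n],\la_\infty)$; in particular $\la_\infty(\mfA_n)=0 \Lra \la_n(\mfA)=0$, which is the asserted equivalence. I expect the only genuinely delicate point to be the $\sigma$-algebra identification $\Phi(\mcB[\R^n]) = \mcB[\R_I^n]$ — one must confirm that the collection of box sets is already closed under countable operations, so that the flat topology $\mfD_n$ creates no Borel sets outside the cylindrical form $\mfA \times I_n$, and that the normalization of the tail factor keeps $\la_\infty$ finite-dimensional rather than degenerate. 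Everything else is a routine transfer through $\Phi$.
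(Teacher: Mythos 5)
The paper does not prove this theorem at all --- it is imported from \cite{GZ,GM} as a preliminary fact --- so there is no internal proof to compare against; your argument is correct and is exactly the transfer that the paper's own definitions set up. Specifically, you show the box map $\Phi\colon \mfA \mapsto \mfA \times I_n$ is an injective $\sigma$-homomorphism carrying $\mcB[\R^n]$ onto $\mcB[\R_I^n]$, so that $\la_\infty(\mfA\times I_n)=\la_n(\mfA)$ is unambiguously defined, countably additive, and has precisely the same null sets as $\la_n$ under this identification, which is the asserted equivalence; this matches the construction in the cited references, where the theory on $\R_I^n$ is built to parallel Lebesgue measure on $\R^n$. The one step you state tersely --- that $\Phi(\mcB[\R^n])\ci\mcB[\R_I^n]$ --- is the routine good-sets argument (the class $\{\mfA\ci\R^n : \mfA\times I_n\in\mcB[\R_I^n]\}$ is a $\sigma$-algebra containing the open sets of $\R^n$), and your closing caveat correctly flags this $\sigma$-algebra identification as the only genuinely delicate point.
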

\begin{cor}\cite{GZ,GM}
  The measure $ \lambda_{\infty}(.) $ is both translationally and rotationally invariant on $ (\mathbb{R}_{I}^{n}, \mcB[\mathbb{R}_{I}^{n}]) $ for each $ n \in \mathbb{N}.$
\end{cor}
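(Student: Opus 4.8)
The plan is to reduce both invariance statements to the corresponding classical invariance properties of the $n$-dimensional Lebesgue measure $\la_n$ on $\R^n$, using the fact (established in the preceding theorem) that $\la_\infty$ restricted to box sets coincides with $\la_n$ on the $\R^n$-factor. Recall that for $\mfA \in \mcB[\R^n]$ we have $\la_\infty(\mfA_n) = \la_n(\mfA)\times \Pi_{i=n+1}^\infty \la_I(I) = \la_n(\mfA)$, so the infinite-dimensional tail $I_n$ contributes only a unit factor and plays no role in the value of the measure.

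First I would make precise the two families of transformations on $\R_I^n$. A translation by $a\in\R^n$ lifts to the map $\tau_{a,I}[\mfA_n] = (\mfA+a)\times I_n$, while an orthogonal transformation $\mcR\in O(n)$ lifts to $\mcR_I[\mfA_n] = \mcR[\mfA]\times I_n$, in accordance with the definition $\mcT_I[\mfA_n]=\mcT[\mfA]$; in both cases the transformation acts only on the $\R^n$-coordinate and fixes the $I_n$-factor. Since each underlying map on $\R^n$ is a homeomorphism and the topology on $\R_I^n$ is the product topology with $I_n$ held fixed, the lifts $\tau_{a,I}$ and $\mcR_I$ are homeomorphisms of $\R_I^n$ and hence carry $\mcB[\R_I^n]$ bijectively onto itself; this guarantees that the push-forward measures $\la_\infty\circ\tau_{a,I}$ and $\la_\infty\circ\mcR_I$ are well defined on $\mcB[\R_I^n]$.

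Next I would verify invariance on the generating class of box sets and then extend. For a box set $\mfA_n$,
\[
\la_\infty(\tau_{a,I}[\mfA_n]) = \la_\infty((\mfA+a)\times I_n) = \la_n(\mfA+a) = \la_n(\mfA) = \la_\infty(\mfA_n),
\]
where the middle equality is the translational invariance of $\la_n$; likewise
\[
\la_\infty(\mcR_I[\mfA_n]) = \la_n(\mcR[\mfA]) = \la_n(\mfA) = \la_\infty(\mfA_n),
\]
using $|\det \mcR|=1$ for $\mcR\in O(n)$. Thus $\la_\infty$ and its push-forwards agree on the $\pi$-system of box sets $\{\mfA_n:\mfA\in\mcB[\R^n]\}$, which generates $\mcB[\R_I^n]$. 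A standard uniqueness-of-measures (Dynkin $\pi$--$\lambda$, or monotone class) argument then extends the equality to all of $\mcB[\R_I^n]$, yielding full translational and rotational invariance for each $n\in\N$.

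The hard part will be the measure-theoretic bookkeeping rather than any deep idea: under a rotation $\mcR$ the image $\mcR[\mfA]$ of a box is generally no longer a box, so invariance cannot be checked directly on boxes but must be phrased on the Borel $\sigma$-algebra, where the homeomorphism property of $\mcR_I$ together with the extension argument does the work. One should also confirm that the relevant measures are $\sigma$-finite so that the uniqueness theorem applies; this follows from the $\sigma$-finiteness of $\la_n$ and the equivalence $\la_\infty\equiv\la_n$ furnished by the preceding theorem.
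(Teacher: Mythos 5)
Your proposal is correct, and it is the intended argument: the paper itself offers no proof of this corollary (it is quoted from \cite{GZ,GM}), but it is framed as an immediate consequence of the preceding theorem identifying $\la_\infty(\mfA_n)=\la_n(\mfA)$, which is exactly the reduction you carry out. One remark that simplifies your write-up: in this construction the class $\{\mfA\times I_n:\mfA\in\mcB[\R^n]\}$ is not merely a generating $\pi$-system but is \emph{all} of $\mcB[\R_I^n]$, since the topology on $\R_I^n$ is by definition $\{\mfU\times I_n:\mfU \mbox{ open in } \R^n\}$ and the set operations of Definition 1.1 act only on the first factor, so this class is already a $\sigma$-algebra. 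Consequently the Dynkin/monotone-class extension step is unnecessary, and your worry about rotations destroying the box structure is moot: here $\mfA$ ranges over arbitrary Borel sets of $\R^n$ (the ``box'' refers to the tail factor $I_n$, not to $\mfA$ being a rectangle), so $\mcR[\mfA]\times I_n$ is again a set of the same form and invariance on this class is already invariance on all of $\mcB[\R_I^n]$.
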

   We can construct a theory on $ \mathbb{R}_{I}^{n} $ that completely parallels that on $ \mathbb{R}^n. $     Since $ \mathbb{R}_{I}^{n} \subset \mathbb{R}_{I}^{n+1},$ we have an increasing sequence, so we define $ \widehat{\mathbb{R}}_{I}^{\infty} = \lim\limits_{ n \to \infty} \mathbb{R}_{I}^{n} = \bigcup\limits_{n=1}^{\infty} \mathbb{R}_{I}^{n}.$ In \cite {GM} it is shown that the measure $ \lambda_{\infty}(.) $ can be extended to $ \mathbb{R}^\infty.$  
     Let $ x =(x_1,x_2,\dots) \in \mathbb{R}_{I}^{\infty}.$ Also let $ I_n = \Pi_{k=n+1}^{\infty}[\frac{-1}{2}, \frac{1}{2}]$ and let $ h_n(\widehat{x})= \chi_{I_n}(\widehat{x}),$ where $\widehat{x} = (x_i)_{i=n+1}^{\infty}.$ 
   Recalling  $\mathbb{R}_{I}^{\infty} $ is the closure of $\widehat{\mathbb{R}}_{I}^{\infty} $ in the induced topology from $ \mathbb{R}^{\infty}.$ From our construction, it is clear that a set of the form $ \mfA= \mfA_n \times (\Pi_{k=n+1}^{\infty}\mathbb{R}) $ is not in $\widehat{\mathbb{R}}_{I}^{\infty}$ for any $n.$ So, $ \widehat{\mathbb{R}}_{I}^{\infty} \neq \mathbb{R}^{\infty}$. The natural topology for $\mathbb{R}_{I}^{\infty}$ is that induced as a closed subspace of $\mathbb{R}^{\infty}.$ Thus if $x= (x_n),~y=(y_n) $ are sequences in $\mathbb{R}_{I}^{\infty},$   a metric $d$ on $\mathbb{R}_{I}^{\infty},$ is defined as  
   $$d(x, y) = \sum\limits_{n=1}^{\infty} \frac{1}{2^n} \frac{|x_n - y_n|}{1+|x_n -y_n|}.$$
   \begin{thm}
  $\mathbb{ R}_{I}^{\infty}= \mathbb{R}^{\infty}$ as sets but not as topological spaces.
   \end{thm}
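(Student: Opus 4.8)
\noindent\emph{Proof idea.} The plan is to separate the set-theoretic equality from the topological distinction. First I would unwind $\widehat{\R}_I^\infty=\bu_{n=1}^\infty\R_I^n$: since $\R_I^n=\R^n\times I_n$ with $I_n=\Pi_{k=n+1}^\infty[-\tf12,\tf12]$, a sequence $x=(x_k)$ lies in $\widehat{\R}_I^\infty$ exactly when all but finitely many of its coordinates lie in $[-\tf12,\tf12]$. Thus $\widehat{\R}_I^\infty$ is a proper subset of $\R^\infty$ (for example $\mathbf 1=(1,1,1,\dots)\notin\widehat{\R}_I^\infty$), and because $\R_I^\infty=\ov{\widehat{\R}_I^\infty}$ is by definition a subset of $\R^\infty$, the inclusion $\R_I^\infty\ci\R^\infty$ is immediate.

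For the reverse inclusion I would show $\widehat{\R}_I^\infty$ is $d$-dense in $\R^\infty$. Given $y=(y_k)\in\R^\infty$ and $\e>0$, choose $N$ with $\sum_{k>N}2^{-k}<\e$ and put $x^{(N)}_k=y_k$ for $k\le N$, $x^{(N)}_k=0$ for $k>N$. Then $x^{(N)}\in\R_I^N\ci\widehat{\R}_I^\infty$, and since $\tf{|t|}{1+|t|}\le 1$ we obtain $d(x^{(N)},y)=\sum_{k>N}2^{-k}\tf{|y_k|}{1+|y_k|}\le\sum_{k>N}2^{-k}<\e$. Hence every $y\in\R^\infty$ is a $d$-limit of points of $\widehat{\R}_I^\infty$, so $y\in\R_I^\infty$; this gives $\R^\infty\ci\R_I^\infty$, and therefore $\R_I^\infty=\R^\infty$ as sets.

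For the topological statement I would argue that the metric $d$ induces only the product topology, which is strictly coarser than the finer (box or uniform) topology $\R^\infty$ carries, so the two are not the same topological space on their common underlying set. The witness is again $\mathbf 1$: the density estimate above shows $\mathbf 1\in\R_I^\infty$ is a $d$-limit of $\widehat{\R}_I^\infty$, yet the neighbourhood $\Pi_{k=1}^\infty(\tf34,\tf54)$ of $\mathbf 1$, which is open in the finer topology, is disjoint from $\widehat{\R}_I^\infty$ because every point of $\widehat{\R}_I^\infty$ has infinitely many coordinates in $[-\tf12,\tf12]$, none lying in $(\tf34,\tf54)$. So $\mathbf 1$ is a limit point of $\widehat{\R}_I^\infty$ for $d$ but not for the finer topology; the two closure operators disagree, the topologies are distinct, and indeed $(\R_I^\infty,d)$ is separable whereas $\R^\infty$ in the finer topology is not, so they cannot be homeomorphic.

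I expect the main obstacle to be this topological half. Because $d$ makes $\widehat{\R}_I^\infty$ dense, the statement only becomes meaningful after one is explicit that $\R^\infty$ is compared under a topology strictly refining the product topology induced by $d$; one then has to produce a neighbourhood separating $\mathbf 1$ (or any point with a coordinate outside $[-\tf12,\tf12]$ infinitely often) from $\widehat{\R}_I^\infty$. The set-equality half, by contrast, is routine once the geometric tail $\sum_{k>N}2^{-k}$ is controlled.
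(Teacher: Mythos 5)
You should first know that the paper itself states this theorem with no proof at all (it is quoted from \cite{GM} and immediately followed by prose), so there is no in-paper argument to measure you against; your attempt has to stand on its own. Your set-theoretic half does stand: since $\R_I^\infty$ is by definition the closure of $\widehat{\R}_I^\infty=\bu_{n=1}^\infty \R_I^n$ inside $\R^\infty$ in the topology metrized by $d$, your truncation estimate $d(x^{(N)},y)\le \sum_{k>N}2^{-k}$ correctly shows $\widehat{\R}_I^\infty$ is $d$-dense, hence $\R_I^\infty=\R^\infty$ as sets, and your observation that $(1,1,1,\dots)\notin\widehat{\R}_I^\infty$ correctly shows the union itself is proper. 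This is surely the intended argument for that half.

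The topological half, however, has a genuine gap: you have put the nonstandard topology on the wrong space. Nothing in the paper (or in standard usage) equips $\R^\infty$ with a box or uniform topology; $\R^\infty$ carries its usual product topology, which is exactly the topology induced by $d$. Worse, your two halves are mutually inconsistent: the closure defining $\R_I^\infty$ is taken in whatever topology $\R^\infty$ carries, so if $\R^\infty$ really carried a topology in which $\Pi_{k=1}^\infty(\frac34,\frac54)$ is a neighbourhood of $(1,1,1,\dots)$ disjoint from $\widehat{\R}_I^\infty$, then $(1,1,1,\dots)$ would not lie in that closure and the set equality you proved in the first half would fail. The asymmetry actually runs the other way. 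By Definition 1.2 of the paper, the topology of $\R_I^n$ is generated by the class $\mfD_n=\{\mfU\times I_n : \mfU \mbox{ open in } \R^n\}$, and it is this family (over all $n$) that generates the natural topology of $\R_I^\infty$; it is $\R_I^\infty$, not $\R^\infty$, that carries the non-product topology. A concrete witness: $\R_I^1=\R\times I_1$ belongs to $\mfD_1$ and so is open in $\R_I^\infty$, but it has empty interior in the product topology of $\R^\infty$, since every nonempty product-open set contains a basic set $U\times\prod_{k>n}\R$ and hence contains points whose coordinates beyond position $n$ are arbitrarily large, so it cannot be contained in $\R\times I_1$. That single example proves the two topologies on the common underlying set differ; with it, your appeal to separability (and to box/uniform topologies) can be discarded entirely.
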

    We call $\mathbb{R}_{I}^{\infty} $ the essentially bounded version of $\mathbb{R}^{\infty}.$ There are certain pathologies of $ \mathbb{R}^{\infty}$ that are preserved to $\mathbb{R}_{I}^{\infty}$, for example, if $\mcA_i$ has measure $ 1+ \epsilon $ for all $i$ then $\lambda_{\infty}(\mcA)= \Pi_{i=1}^{\infty}\lambda(\mcA_i)= \infty.$ On the other hand, if each $ \mcA_i$ has measure $ 1-\epsilon,$ then $\lambda_{\infty}(\mcA)= \Pi_{i=1}^{\infty}\lambda(\mcA_i)=0$. Thus the class of sets $ \mcA \in \mathcal{B}[\mathbb{R}_{I}^{\infty}] $ for which $ 0 < \lambda_{\infty}(\mcA) < \infty$ is relatively small. It follows that the sets of measure zero need not be small nor sets of infinite measure be large.
    \subsection{Measurable function}
  We discuss about measurable function on $\R_I^\infty$ as follows:\\
   Let $ x =(x_1,x_2,\dots) \in \R_I^\infty,$  $ I_n = \Pi_{k=n+1}^{\infty}[\frac{-1}{2}, \frac{1}{2}] $ and let $ h_n(\widehat{x})= \chi_{I_n}(\widehat{x}),$ where $\widehat{x} = (x_i)_{i=n+1}^{\infty}.$
     \begin{Def}
     \cite{GM} Let $M^n $ be represented the class of measurable functions on $\R^n.$ If $ x \in \R_I^\infty$ and $f^n \in M^n,$ Let $\overline{x}= (x_i)_{i=1}^{n} $ and define an essentially tame measurable function of order $n $ (or $ e_n-$tame ) on $\R_I^\infty$ by $$f(x)= f^n(\overline{x}) \otimes h_n(\widehat{x}).$$ We let $M_I^n = \{f(x)~: f(x) = f^n( \overline{x}) \otimes h_n(\widehat{x}),~x \in \R_I^\infty \}$ be the class of all $e_n-$ tame functions.
     \end{Def}
       \begin{Def}
     A function $f: \R_I^\infty \to \R$ is said to be measurable and we write $ f \in M_I$, if there is a sequence $\{f_n \in M_I^n \} $ of $e_n-$ tame functions, such that $$\lim\limits_{ n \to \infty} f_n(x) \to f(x)~\mu_\infty-(a.e.).$$
     \end{Def}
  \subsection{ $L^1$-Theory in $\R_I^\infty$}
  Let $L^1[\R_I^n] $ be the class of integrable functions on $\R_I^n.$ Since $\R_I^n \subset \R_I^{n+1}$ we define $L^1[\widehat{\R}_I^\infty]= \bigcup\limits_{n=1}^{\infty}L^1[\R_I^n].$ We say that a measurable function $ f \in L^1[\R_I^\infty]$ if there exists a Cauchy sequence $\{f_n\}\subset L^1[\widehat{\R}_I^\infty]$ with $f_n \in L^1[\R_I^n]$ and $\lim\limits_{ n \to \infty}f_n(x)=f(x)~, \lambda_\infty $-(a.e.).\\
   The integral of $f \in L^1[\R_I^\infty]$ is defined by $$\int_{\R_I^\infty}f(x)d\lambda_\infty(x)= \lim\limits_{ n \to \infty}\int_{\R_I^\infty}f_n(x)d\lambda_\infty,$$ where $\{f_n\} \subset L^1[\R_I^\infty]$ is any Cauchy-sequence converges to $f(x)$-a.e.
  \begin{thm}\label{th18}
  $L^1[\widehat{\R}_I^\infty]= L^1[\R_I^\infty].$
  \end{thm}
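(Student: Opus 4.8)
The plan is to prove the two set inclusions separately; the inclusion $L^1[\widehat{\R}_I^\infty]\subseteq L^1[\R_I^\infty]$ is essentially formal, while the reverse inclusion carries all of the content and amounts to showing that the union $\bigcup_n L^1[\R_I^n]$ is already complete. For the easy inclusion I would start from an arbitrary $f\in L^1[\widehat{\R}_I^\infty]=\bigcup_n L^1[\R_I^n]$, so that $f\in L^1[\R_I^m]$ for some fixed $m$. Setting $f_n=0$ for $n<m$ and $f_n=f$ (viewed in $L^1[\R_I^n]$ through the inclusion $\R_I^m\subset\R_I^n$) for $n\ge m$ produces a sequence $\{f_n\}\subset L^1[\widehat{\R}_I^\infty]$ with $f_n\in L^1[\R_I^n]$ that is eventually constant, hence trivially $L^1$-Cauchy and convergent to $f$ pointwise $\lambda_\infty$-a.e. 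Since such an $f$ is measurable (it lies in some $M_I^m\subseteq M_I$), the definition of $L^1[\R_I^\infty]$ yields $f\in L^1[\R_I^\infty]$.

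For the reverse inclusion $L^1[\R_I^\infty]\subseteq L^1[\widehat{\R}_I^\infty]$ I would take $f\in L^1[\R_I^\infty]$ together with a witnessing sequence $\{f_n\}$, $f_n\in L^1[\R_I^n]$, that is $L^1$-Cauchy and satisfies $f_n\to f$ $\lambda_\infty$-a.e. The first task is to upgrade Cauchyness in norm to an honest limit: passing to a subsequence $\{f_{n_j}\}$ with $\sum_j\|f_{n_{j+1}}-f_{n_j}\|_1<\infty$, the series $\sum_j|f_{n_{j+1}}-f_{n_j}|$ converges in $L^1$ and $\lambda_\infty$-a.e. (by monotone/dominated convergence, using the equivalence of $\lambda_\infty$ with $n$-dimensional Lebesgue measure on each $\R_I^n$ established above). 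Thus $\{f_{n_j}\}$ converges $\lambda_\infty$-a.e. and in $L^1$-norm to some $g\in L^1(\R_I^\infty,\lambda_\infty)$, and matching this a.e. limit against the given one forces $f=g$ a.e. Consequently $f\in L^1(\R_I^\infty,\lambda_\infty)$ with $\|f_n-f\|_1\to 0$.

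The decisive step is then to show that this limit actually sits at a finite level, i.e. $f\in L^1[\R_I^m]$ for some $m$, for only then does $f$ belong to $\bigcup_n L^1[\R_I^n]=L^1[\widehat{\R}_I^\infty]$. Here I would exploit the $e_n$-tame structure of Definition: each $f_n$ is, up to $\lambda_\infty$-null sets, determined by its finite-dimensional trace $f_n^{(n)}\in L^1(\R^n)$ tensored against the characteristic tail $h_n=\chi_{I_n}$. Under the isometric identification $L^1[\R_I^n]\hookrightarrow L^1[\R_I^{n+1}]$ given by $f^n\otimes h_n=f^n\otimes\chi_{[-1/2,1/2]}\otimes h_{n+1}$, each increment can be written as $\|f_{n+1}-f_n\|_1=\int_{\R^{n+1}}\bigl|f_{n+1}^{(n+1)}-f_n^{(n)}\otimes\chi_{[-1/2,1/2]}\bigr|\,d\lambda_{n+1}$. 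Summability of these increments is meant to pin the dependence of the limit on the coordinates beyond a finite block down to exactly the tail $\chi_{I_m}$, placing $f$ in $L^1[\R_I^m]$.

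I expect this last identification to be the main obstacle, and the point where the argument must be made quantitative rather than formal. One has to rule out the possibility that $f$ carries positive $\lambda_\infty$-mass spread over infinitely many coordinate directions lying outside the tail box $I_m$; equivalently, one must prove that the $\lambda_\infty$-measure of the region on which the a.e. limit can deviate from a genuine finite-level tame function is zero. I would attempt this by controlling each increment through its single new-coordinate contribution and summing, so that the tail profile of $f_n^{(n)}$ stabilizes to $\chi_{[-1/2,1/2]}$ in every coordinate beyond some index; verifying that this stabilization is uniform enough to conclude $f=f^{(m)}\otimes h_m$ a.e. is the crux on which the whole theorem rests.
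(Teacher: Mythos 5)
Your first two paragraphs are correct but routine: the forward inclusion via an eventually constant sequence, and the Riesz--Fischer argument upgrading the witnessing Cauchy sequence to genuine convergence in $L^1(\lambda_\infty)$, both go through. The decisive third step, however, is not merely left open in your proposal --- it cannot be carried out, because the possibility you say must be ruled out (that the limit ``carries positive $\lambda_\infty$-mass spread over infinitely many coordinate directions'') actually occurs. Take $B_k=\bigl([-\tfrac{1}{2},\tfrac{1}{2}]^{k-1}\times[1,2]\bigr)\times I_k$, so that $\chi_{B_k}\in L^1[\R_I^k]$ is $e_k$-tame, $\lambda_\infty(B_k)=1$, and the $B_k$ are pairwise disjoint (a point of $B_k$ has $x_j\in[-\tfrac{1}{2},\tfrac{1}{2}]$ for $j<k$ and $x_k\in[1,2]$); set
$$f_n=\sum_{k=1}^{n}2^{-k}\chi_{B_k}\in L^1[\R_I^n].$$
Then $\|f_{n+1}-f_n\|_{1}=2^{-(n+1)}$, so $\{f_n\}$ is $L^1$-Cauchy, and $f_n\to f:=\sum_{k=1}^{\infty}2^{-k}\chi_{B_k}$ pointwise everywhere and in $L^1$-norm; hence $f\in L^1[\R_I^\infty]$ by the paper's definition. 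But every element of $L^1[\R_I^m]$ is, by the definition of measurability on $\R_I^m$, an $e_m$-tame function and so vanishes off $\R_I^m=\R^m\times I_m$, while $f=2^{-(m+1)}>0$ on $B_{m+1}$, a set of measure one disjoint from $\R_I^m$. Thus $f\notin L^1[\R_I^m]$ for any $m$, i.e.\ $f\in L^1[\R_I^\infty]\setminus L^1[\widehat{\R}_I^\infty]$: the union is dense in the completion but is a proper subset of it, so no quantitative refinement of your ``stabilization'' argument in the third and fourth paragraphs can close the gap.

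For comparison, the paper offers no proof of this theorem at all --- it is imported from the Gill--Myers construction --- and the only gloss given later (``functions in $L^p[\widehat{\R}_I^\infty]$ differ from functions in its closure $L^p[\R_I^\infty]$ by sets of measure zero'') is precisely the claim your crux would require, and precisely what the example above refutes under the paper's own definitions of $\mcB[\R_I^n]$ and of $e_n$-tame functions. So your proposal correctly isolates where all the content would have to lie; what your first two paragraphs actually establish is the correct, weaker statement that $L^1[\R_I^\infty]$ is the $L^1$-completion of $L^1[\widehat{\R}_I^\infty]$, in which the union sits densely and whose elements are honest $L^1(\lambda_\infty)$-functions. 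The asserted set equality should be read as a definition of $L^1[\R_I^\infty]$ up to closure, not as a theorem admitting a proof.
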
 
    Let $C_c[\R_I^n]$ be the class of continuous function on $\R_I^n$ which vanish outside compact sets. We say that a measurable function $f \in C_c[\R_I^\infty],$ if there exists a Cauchy-sequence$\{f_n\} \subset \bigcup\limits_{n=1}^{\infty}C_c[\R_I^n]=C_c[\widehat{\R}_I^\infty]$ such that $\lim\limits_{n \to \infty}||f_n -f||_\infty=0.$  We define $C_c[\R_I^\infty]$, the continuous functions that vanish at $\infty$, and  $C_0^\infty[\R_I^\infty]$ the bounded functions,  in similar way. 
    \begin{thm}
    $C_0^\infty[\widehat{\R}_I^\infty]= C_0^\infty[\R_I^\infty].$
    \end{thm}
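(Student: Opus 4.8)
The plan is to mimic the proof of Theorem~\ref{th18} ($L^1[\widehat{\R}_I^\infty] = L^1[\R_I^\infty]$), since the statement $C_0^\infty[\widehat{\R}_I^\infty] = C_0^\infty[\R_I^\infty]$ has exactly the same flavor: one side is defined as a bare union $\bigcup_{n=1}^\infty C_0^\infty[\R_I^n]$, and the other is its completion under the sup-norm Cauchy-sequence procedure. The inclusion $C_0^\infty[\widehat{\R}_I^\infty] \subseteq C_0^\infty[\R_I^\infty]$ should be the easy direction: any $f \in C_0^\infty[\widehat{\R}_I^\infty]$ lies in some $C_0^\infty[\R_I^n]$, and I would exhibit it as the limit of the constant sequence $f_m = f$ (for all $m \geq n$), which is trivially Cauchy in $\|\cdot\|_\infty$ and converges to $f$, so $f \in C_0^\infty[\R_I^\infty]$ by definition.

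The substantive direction is the reverse inclusion $C_0^\infty[\R_I^\infty] \subseteq C_0^\infty[\widehat{\R}_I^\infty]$. First I would take $f \in C_0^\infty[\R_I^\infty]$, so by definition there is a Cauchy sequence $\{f_n\}$ with $f_n \in C_0^\infty[\R_I^n]$ and $\|f_n - f\|_\infty \to 0$. The goal is to show $f$ itself already belongs to some $C_0^\infty[\R_I^m]$, i.e.\ that the union is already closed and no genuinely new functions are produced by completion. Here the crucial structural fact is the nesting $\R_I^n \subset \R_I^{n+1}$ together with the tame-function picture from the measurability subsection: each $f_n$, viewed on $\R_I^\infty$, factors as $f^n(\overline{x}) \otimes h_n(\widehat{x})$ with $h_n = \chi_{I_n}$, and the essential boundedness built into $\R_I^\infty$ (every tail coordinate is confined to $[-\tfrac12,\tfrac12]$) is what prevents the completion from leaving the union. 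I would argue that uniform convergence forces the limit $f$ to inherit smoothness and compact support: since $\{f_n\}$ is uniformly Cauchy, $f$ is continuous, and the compact-support/vanishing-at-infinity condition is preserved in the sup-norm limit, so $f \in C_0^\infty[\R_I^\infty]$ is itself realizable on a finite stage $\R_I^m$.

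The main obstacle will be justifying that uniform ($\|\cdot\|_\infty$) convergence of smooth functions yields a \emph{smooth} limit that is genuinely tame of some finite order $m$, rather than merely a continuous function. Uniform convergence alone controls only the $0$-th order values, not the derivatives, so to stay inside $C_0^\infty$ I would need to invoke the underlying structure of $\R_I^\infty$ as the essentially bounded version of $\R^\infty$ (Theorem stating $\R_I^\infty = \R^\infty$ as sets) and the fact that the tail factor $h_n$ stabilizes: because $I_n \supset I_{n+1}$ and the characteristic functions $h_n$ agree with $h_m$ on the relevant bounded tail once $n$ is large, the sequence $\{f_n\}$ is eventually supported on a fixed finite-dimensional slab. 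I would make this precise by showing the supports stabilize, so that $f = f^m(\overline{x}) \otimes h_m(\widehat{x})$ for some fixed $m$, placing $f$ in $C_0^\infty[\R_I^m] \subseteq C_0^\infty[\widehat{\R}_I^\infty]$. Combining both inclusions yields the claimed equality.
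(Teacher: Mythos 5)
First, a point of reference: the paper states this theorem with \emph{no proof at all} (it is one of several assertions imported bare from the Gill--Myers $\R_I^\infty$ framework), so your proposal can only be judged on its own terms. Your easy direction is fine. The hard direction, however, has two genuine gaps. The first is the claim that ``the supports stabilize, so that $f = f^m(\overline{x})\otimes h_m(\widehat{x})$ for some fixed $m$.'' This is false as stated: take $f_k \in C_0^\infty[\R_I^k]$ of exact order $k$ with $\|f_k\|_\infty \le 2^{-k}$; this sequence is Cauchy, its supports never stabilize, and the limit is $0$ (which does lie in the union, but not for the reason you give). What is actually true is a dichotomy --- either the limit is $0$, or the orders are eventually constant --- and proving it requires the one estimate your sketch never isolates: if $f = g_n \otimes \chi_{I_n}$ and $g = g_m \otimes \chi_{I_m}$ are continuous tame functions of orders $n < m$, then $\|f\|_\infty \le 2\,\|f - g\|_\infty$. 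Indeed, at points with $|x_{n+1}| > \frac12$ (remaining coordinates in $I$) one has $f = 0$, so $|g_m| \le \|f-g\|_\infty$ there; continuity of $g_m$ pushes this bound onto the slice $x_{n+1} = \frac12$, where also $|g_n - g_m| \le \|f-g\|_\infty$, whence $\|g_n\|_\infty \le 2\|f-g\|_\infty$. The mechanism is precisely the jump of the indicator tail $\chi_{I_n}$ across $|x_{n+1}| = \frac12$, i.e.\ the region where $h_n$ and $h_m$ \emph{disagree} --- the opposite of your appeal to the tails ``agreeing on the relevant bounded tail once $n$ is large.''

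Second, the obstacle you yourself flag --- that sup-norm convergence does not preserve smoothness --- is never actually overcome; invoking the set equality $\R_I^\infty = \R^\infty$ is irrelevant to it. Once the orders stabilize, you are left with a sup-norm Cauchy sequence inside a fixed $C_0^\infty[\R_I^n]$, whose limit is merely in $C_0[\R_I^n]$; with the bare sup-norm definition (the only one the paper gives, by analogy with $C_c[\R_I^\infty]$), the nontrivial inclusion $C_0^\infty[\R_I^\infty] \ci C_0^\infty[\widehat{\R}_I^\infty]$ is then simply false. The statement becomes true, and your outline repairable, only if $C_0^\infty[\R_I^\infty]$ is defined by requiring uniform convergence of all derivatives $\D^\alpha f_m \to \D^\alpha f$, as the paper later does when defining $C^\infty[\R_I^\infty]$; then the rigidity dichotomy applied at order zero, together with the standard fact that uniform convergence of a sequence and of all its derivatives yields a smooth limit with $\D^\alpha f = \lim_m \D^\alpha f_m$, closes the argument. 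As it stands, your proof has the right skeleton (easy inclusion plus ``the completion adds nothing'') but is missing both the rigidity estimate and the strengthened definition without which the smoothness step fails.
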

    \begin{thm}
    $C_c[\R_I^\infty] $ is dense in $L^1[\R_I^\infty].$
    \end{thm}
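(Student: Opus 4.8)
The plan is to exploit the finite-dimensional layered structure of $L^1[\R_I^\infty]$ and reduce the claim to the classical fact that $C_c[\R^n]$ is dense in $L^1[\R^n]$. First I would observe that it suffices to show that the union $C_c[\widehat{\R}_I^\infty] = \bigcup_{n=1}^{\infty} C_c[\R_I^n]$ is $L^1$-dense: since every finite-level function in $C_c[\R_I^n]$ is trivially a uniform Cauchy limit of itself, we have $C_c[\widehat{\R}_I^\infty] \ci C_c[\R_I^\infty]$, and these finite-level functions already lie in $L^1[\R_I^\infty]$ because $C_c[\R_I^n] \ci L^1[\R_I^n] \ci L^1[\widehat{\R}_I^\infty] = L^1[\R_I^\infty]$ by Theorem \ref{th18}. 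This sidesteps the question of whether a general uniform limit of compactly supported functions remains integrable.

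Fix $f \in L^1[\R_I^\infty]$ and $\e > 0$. By the very definition of $L^1[\R_I^\infty]$ there is a Cauchy sequence $\{f_n\}$ with $f_n \in L^1[\R_I^n]$ and $f_n \ra f$ $\lambda_\infty$-a.e. A sequence that is Cauchy in $L^1$ and converges almost everywhere converges to that same limit in $L^1$-norm, so $\|f_n - f\|_1 \ra 0$; hence I can choose $N$ and set $g := f_N \in L^1[\R_I^N]$ with $\|f - g\|_1 < \e/2$.

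Next I would transfer the classical density statement to level $N$. By the equivalence of $\lambda_\infty$ restricted to $\R_I^N = \R^N \times I_N$ with $N$-dimensional Lebesgue measure (established above), together with the fact that the bounded factor $I_N$ has total measure one, the map $g^N \mapsto g^N \otimes h_N$ identifies $L^1[\R^N]$ isometrically with $L^1[\R_I^N]$ and carries $C_c[\R^N]$ onto $C_c[\R_I^N]$. Applying the classical theorem that $C_c[\R^N]$ is dense in $L^1[\R^N]$, I obtain $\varphi \in C_c[\R_I^N]$ with $\|g - \varphi\|_1 < \e/2$. Then $\varphi \in C_c[\widehat{\R}_I^\infty] \ci C_c[\R_I^\infty]$ and, by the triangle inequality, $\|f - \varphi\|_1 \le \|f - g\|_1 + \|g - \varphi\|_1 < \e$, which proves the density.

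The main obstacle I anticipate is bookkeeping rather than analytic: one must check that the $L^1$-norms are genuinely consistent along the chain $\R_I^N \subset \R_I^{N+1} \subset \cdots$, so that neither the identification $L^1[\R_I^N] \cong L^1[\R^N]$ nor the embedding of a level-$N$ function into $L^1[\R_I^\infty]$ alters its norm. This rests on the fact that each adjoined factor is the unit box $[-\tfrac{1}{2},\tfrac{1}{2}]$ with $\lambda_I$-measure one, so that integrating out the tame tail $h_N = \chi_{I_N}$ leaves the integral unchanged; once this consistency is recorded, the remaining steps are routine.
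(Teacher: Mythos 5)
Your proof is correct and follows essentially the same route the paper takes: reduce to the finite-dimensional levels $\R_I^n$, invoke the classical density of $C_c[\R^n]$ in $L^1[\R^n]$ via the isometric identification $L^1[\R_I^n]\cong L^1[\R^n]$, and pass to the limit through the union $C_c[\widehat{\R}_I^\infty]$. Note that the paper actually states this particular theorem without proof (its proof of the adjacent $C_0^\infty$ density theorem is exactly this union-of-levels argument, done loosely), so your write-up --- in particular the explicit step that an $L^1$-Cauchy sequence converging $\lambda_\infty$-a.e.\ converges in norm, which justifies approximating $f$ by a finite-level $f_N$ --- supplies the details the paper omits.
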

    \begin{thm}
    $C_0^\infty[\R_I^\infty] $ is dense in $L^1[\R_I^\infty].$
    \end{thm}
    \begin{proof}
    Since $C_0^\infty[\R_I^n] \subset L^1[\R_I^n]$ as dense. So $\cup C_0^\infty[\R_I^n] \subset \cup L^1[\R_I^n]$ as dense. This gives $C_0^\infty[\cup \R_I^n] \subset L^1[\cup \R_I^n]$ as dense. Now $\lim\limits_{n \to \infty}C_0^\infty[\cup \R_I^n] \subset \lim\limits_{n \to \infty}  L^1[\cup \R_I^n].$ This implies $ C_0^\infty[\lim\limits_{n \to \infty}\cup \R_I^n] \subset L^1[\lim\limits_{n \to \infty}\cup \R_I^n].$ So, $C_0^\infty[\widehat{\R}_I^\infty] \subset L^1[\widehat{\R}_I^\infty]=L^1[\R_I^\infty]$ as dense.
    \end{proof}
    \begin{rem}
    In a similar fashion we can define the $L_{loc}^{1}[\R_I^\infty].$
    \end{rem}
    \subsection{$L^p$-Theory in $\R_I^\infty$:} The $L^p$ spaces are function spaces defined using a natural generalization of the $p$-norm 
    for finite-dimensional vector spaces. They are sometimes called Lebesgue spaces. $L^p$ spaces form an 
    important class of Banach spaces in functional analysis and  topological vector spaces. Because of     their key role in the mathematical analysis of measure and probability spaces. Lebesgue spaces are also  used  in the theoretical discussion of problems in physics, statistics, finance, engineering, and
    other disciplines.
      We  now construct the spaces $L^p[\R_I^\infty],~1<p<\infty$, using the same approach that led to $L^1[\R_I^\infty].$ Since $L^p[\R_I^n] \subset L^p[\R_I^{n+1}],~ $ we define $L^p[\widehat{\R}_I^\infty] = \cup_{n=1}^{\infty}L^p[\R_I^n].$ We say that a measurable function $ f \in L^p[\R_I^\infty],$ if there is a Cauchy-sequence $\{f_n\} \subset L^p[\widehat{\R}_I^\infty]$ such that $\lim\limits_{ n \to \infty}||f_n -f||_p = 0.$\\
    Similar to Theorem \ref{th18}, we have that functions in $L^p[\widehat{\R}_I^\infty]$ differ from functions in its closure $L^p[\R_I^\infty], $ by sets of measure zero.
    \begin{thm}
     $L^p[\widehat{\R}_I^\infty]= L^p[\R_I^\infty].$
    \end{thm}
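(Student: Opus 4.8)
The plan is to follow the template of Theorem~\ref{th18}, realising $L^p[\R_I^\infty]$ as the $\|\cdot\|_p$-closure of the increasing union $L^p[\widehat{\R}_I^\infty]=\bu_{n=1}^{\infty}L^p[\R_I^n]$ and showing that passing to this closure introduces nothing beyond sets of $\lambda_\infty$-measure zero. The inclusion $L^p[\widehat{\R}_I^\infty]\ci L^p[\R_I^\infty]$ is immediate: if $f\in L^p[\R_I^N]$ for some $N$, the constant sequence $f_n\equiv f$ is Cauchy and converges to $f$ in $\|\cdot\|_p$, so $f\in L^p[\R_I^\infty]$. The substance lies in the reverse inclusion.

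First I would record the isometry that renders the tail coordinates harmless. For an $e_n$-tame function $f=f^n(\ov{x})\otimes h_n(\widehat{x})$ with $h_n=\chi_{I_n}$, the product structure of $\lambda_\infty$---which restricts to $n$-dimensional Lebesgue measure on $\R_I^n$---together with the fact that the tail box $I_n=\Pi_{k>n}[-\frac12,\frac12]$ has total mass $1$, yields $\|f\|_p=\|f^n\|_{L^p[\R^n]}$. Consequently each inclusion $L^p[\R_I^n]\hookrightarrow L^p[\R_I^{n+1}]$, $g\mapsto g\otimes\chi_{[-\frac12,\frac12]}$, is an isometric embedding, and each $L^p[\R_I^n]$ is complete, being isometrically isomorphic to $L^p[\R^n]$.

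Next, given $f\in L^p[\R_I^\infty]$ with a Cauchy sequence $\{f_n\}\ci L^p[\widehat{\R}_I^\infty]$ and $\|f_n-f\|_p\to0$, I would extract a subsequence converging $\lambda_\infty$-almost everywhere to $f$ (the standard Riesz extraction, legitimate here because every $\|\cdot\|_p$-computation reduces, via the isometry above, to a finite-dimensional Lebesgue integral). Since each $f_n$ is $e_{k_n}$-tame, the a.e.\ limit $f$ is measurable in the sense of $M_I$, and $\|f\|_p<\iy$ follows from Fatou's lemma together with $\sup_n\|f_n\|_p<\iy$. This identifies $f$, up to a $\lambda_\infty$-null set, with a measurable $L^p$ function on $\R_I^\infty$.

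The hard part will be the final identification: showing that $f$ already lies in the union $\bu_{n=1}^{\infty}L^p[\R_I^n]$ rather than only in its abstract closure. This is precisely where the \emph{essentially bounded} nature of $\R_I^\infty$ must enter, since on the tail the approximants satisfy $h_{k_n}\equiv1$; one would then try to recover $f$ from its finite-order sections and conclude that $f$ agrees $\lambda_\infty$-almost everywhere with an $e_N$-tame function of some finite order $N$. I regard this as the genuine obstacle, because for an increasing union of Banach spaces completeness is not automatic: the argument rests entirely on the triviality of the tail factor in the $\lambda_\infty$-norm, and one must verify with care that a Cauchy sequence of ever-increasing order cannot concentrate a nonzero limiting dependence on infinitely many tail coordinates. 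Once this is secured, $f\in L^p[\widehat{\R}_I^\infty]$ up to a null set, which with the trivial inclusion gives $L^p[\widehat{\R}_I^\infty]=L^p[\R_I^\infty]$.
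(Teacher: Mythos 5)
Your first three paragraphs are sound, and in fact they already contain everything the paper itself has behind this statement (the paper offers no proof: it only remarks that ``the same proof as Theorem~\ref{th18}'' applies --- Theorem~\ref{th18} being itself imported from \cite{GM} without proof --- and glosses the equality as holding up to sets of measure zero). The gap is exactly the step you flag and then defer: that the norm/a.e.\ limit $f$ must agree a.e.\ with an $e_N$-tame function of some finite order $N$. This step is not merely the hard part; it is false, so no amount of care with the tail factor can secure it. Concretely, fix $u\in L^p[\R]$ with $0<\|u\|_p<\iy$ and set, in your notation,
\begin{equation*}
f_n=f^n\otimes h_n,\qquad f^n(\ov{x})=u(x_1)\Bigl(\sum_{k=2}^{n}2^{-k}\sgn(x_k)\Bigr)\prod_{j=2}^{n}\chi_{[-1/2,1/2]}(x_j)\in L^p[\R^n].
\end{equation*}
Each $f_n\in L^p[\R_I^n]$, and for $n>m$ one has $\|f_n-f_m\|_p\le 2^{-m}\|u\|_p$ because the tail factor carries total mass $1$; so $\{f_n\}$ is Cauchy and converges in norm and pointwise to the function $f$ equal to $u(x_1)\bigl(\sum_{k=2}^{\iy}2^{-k}\sgn(x_k)\bigr)$ on $\R\times I_1$ and $0$ elsewhere. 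Thus $f\in L^p[\R_I^\infty]$. But $f$ is not a.e.\ equal to any $g^N\otimes h_N$: by Fubini, on each slice with $u(x_1)\neq 0$ and all coordinates other than $x_{N+1}$ fixed, $f$ takes two values differing by $2^{-N}|u(x_1)|$ as $\sgn(x_{N+1})$ changes, each on a set of $x_{N+1}$-measure $1/2$, whereas $g^N\otimes h_N$ is constant in $x_{N+1}$ on that slice. So $f$ lies in the closure but not in $L^p[\widehat{\R}_I^\infty]$, even modulo null sets.

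This failure is also forced abstractly: by your own isometry each $L^p[\R_I^n]$ is complete, hence a closed, proper subspace of $L^p[\R_I^{n+1}]$, hence nowhere dense in $L^p[\R_I^\infty]$; Baire's theorem forbids a complete space from being a countable union of nowhere dense sets, so the union can never equal its closure (under either reasonable reading of $L^p[\R_I^n]$, tame functions or all measurable functions on the slab $\R^n\times I_n$). What your argument does prove --- and what is the defensible content here, apparently the intended one given the paper's ``up to measure zero'' gloss --- is that $L^p[\widehat{\R}_I^\infty]$ is dense in $L^p[\R_I^\infty]$ and that every element of the closure is the $\lambda_\infty$-a.e.\ limit of a norm-Cauchy sequence from the union, so that defining $L^p[\R_I^\infty]$ via norm limits or via a.e.\ limits (as was done for $L^1$) produces the same space. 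You should prove and assert that density/identification statement; the literal set equality cannot be rescued.
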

    \begin{Def}\label{def114}
    If $f \in L^p[\R_I^\infty],$ we define the integral of $f$ by \begin{equation}\label{eq111}
    \int_{\R_I^\infty}f(x)d\lambda_\infty(x)= \lim\limits_{n \to \infty}\int_{\R_I^\infty}f_n(x)d\lambda_\infty(x).
    \end{equation}
    \end{Def}
    \begin{thm}
    If $f \in L^p[\R_I^\infty],$ then above integral exists.
    \end{thm}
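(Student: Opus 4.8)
The plan is to read the defining limit in Definition~\ref{def114} as the value at $f$ of the integration functional obtained by continuous extension from the dense subspace $L^p[\widehat{\R}_I^\infty]$ to its completion $L^p[\R_I^\infty]$; the existence of the limit in (\ref{eq111}) is then precisely the assertion that this functional is well defined on the completion. Concretely, fix a Cauchy sequence $\{f_n\} \subset L^p[\widehat{\R}_I^\infty]$ with $f_n \in L^p[\R_I^n]$ and $\|f_n - f\|_p \to 0$, and set $a_n = \int_{\R_I^\infty} f_n \, d\lambda_\infty$. Since each $f_n$ is $e_n$-tame and the tame tail $I_n$ carries unit $\lambda_\infty$-measure, the value reduces to $a_n = \int_{\R^n} f^n \, d\lambda_n$; using the density results already established (so that one may, after a diagonal argument, take the $f_n$ with compact support, hence in $L^1 \cap L^p$) each $a_n$ is a genuine finite real number. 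Because $\R$ is complete, it then suffices to prove that $\{a_n\}$ is a Cauchy sequence of reals.

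The crux is a single estimate. By linearity, for $m,n$ with $N = \max\{m,n\}$ the difference $f_n - f_m$ is $e_N$-tame, and
\[ |a_n - a_m| = \Big| \int_{\R_I^\infty} (f_n - f_m)\, d\lambda_\infty \Big| \le \int_{\R_I^\infty} |f_n - f_m|\, d\lambda_\infty. \]
I would bound the last integral by the $L^p$-distance via H\"older's inequality with conjugate exponent $q$, $\frac1p + \frac1q = 1$, obtaining $|a_n - a_m| \le C\,\|f_n - f_m\|_p$ for a constant $C$ governed by the $\lambda_\infty$-measure of the region supporting $f_n - f_m$. This is exactly where the essentially bounded structure of $\R_I^\infty$ enters: on the tame tail $I_N$ the measure is finite (equal to $1$), mirroring the clean inequality $|a_n - a_m| \le \|f_n - f_m\|_1$ that underlies the integral in the $L^1$ theory. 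Controlling the remaining $\R^N$-direction \emph{uniformly} in $m,n$ — so that $C$ does not blow up as the supports spread out — is the main obstacle, and is what forces one to work with the compactly supported (equivalently $L^1 \cap L^p$) representatives selected above and to lean on the $L^1$-completeness recorded in Theorem~\ref{th18}.

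Granting this estimate, $\{a_n\}$ is Cauchy in $\R$ because $\|f_n - f_m\|_p \to 0$, so completeness of $\R$ yields $a := \lim_n a_n$, which is the integral in (\ref{eq111}). Finally I would verify that $a$ is independent of the chosen Cauchy sequence: if $\{g_n\}$ is another $L^p$-Cauchy sequence with $g_n \to f$, then the interleaved sequence $f_1, g_1, f_2, g_2, \dots$ is again $L^p$-Cauchy and converges to $f$, so by the same estimate its sequence of integrals converges; since $\{a_n\}$ and $\{\int g_n\, d\lambda_\infty\}$ are subsequences of one convergent sequence, their limits coincide. This establishes both existence and well-definedness of the integral, completing the proof.
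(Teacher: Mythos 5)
Your strategy is in essence the same one the paper itself uses: reduce the existence of $\int_{\R_I^\infty} f\,d\lambda_\infty$ to showing that $a_n=\int f_n\,d\lambda_\infty$ is Cauchy in $\R$, on the strength of $\{f_n\}$ being $L^p$-Cauchy (the paper's entire proof is the one-line assertion that this suffices). The difference is that you make explicit the estimate this requires, namely $|a_n-a_m|\le C\,\|f_n-f_m\|_p$ with $C$ independent of $n,m$, and you correctly flag obtaining such a uniform $C$ as ``the main obstacle.'' But your proposal never overcomes that obstacle, and it cannot be overcome: on a set of infinite $\lambda_\infty$-measure the integration functional is unbounded (indeed not even everywhere defined) on $L^p$ for $p>1$, and passing to compactly supported representatives does not help, because the H\"older constant $\lambda_\infty\bigl(\mathrm{supp}(f_n-f_m)\bigr)^{1/q}$ necessarily grows as the supports spread. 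Concretely, take $p=2$ and $f_m = \bigl(\sum_{k=1}^{m}k^{-3/4}\chi_{[k,k+1]}\bigr)\otimes h_1 \in L^2[\R_I^1]$. Each $f_m$ is compactly supported, hence in $L^1\cap L^2$ exactly as your diagonal argument would select; the sequence is $L^2$-Cauchy and converges a.e.\ and in $L^2$ to $f=\bigl(\sum_{k=1}^{\infty}k^{-3/4}\chi_{[k,k+1]}\bigr)\otimes h_1\in L^2[\R_I^\infty]$, yet $a_m=\sum_{k=1}^{m}k^{-3/4}\to\infty$. So the limit in (\ref{eq111}) fails to exist, your interleaving argument for well-definedness has nothing to apply to, and the statement read literally is false for $p>1$.

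The inequality you were reaching for is available exactly when $p=1$, where $|a_n-a_m|\le\|f_n-f_m\|_1$ holds with constant $1$; that is why the $L^1$ theory goes through and why your appeal to ``the clean inequality that underlies the $L^1$ theory'' does not transfer. For $p>1$ the theorem can only be salvaged by reinterpreting it --- for instance restricting to $f\in L^1\cap L^p$ with approximating sequences Cauchy in both norms, or reading ``the integral exists'' as the existence of $\lim_{n}\int|f_n|^p\,d\lambda_\infty$ (which does follow from $L^p$-Cauchyness and recovers the norm). To be clear, this defect is shared with the paper: its proof is the same appeal to $L^p$-Cauchyness and is equally silent on the missing uniform bound. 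Your write-up is more honest in that it names the gap; it just does not close it, and no choice of representatives can.
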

    \begin{proof}
    The proof follows from the fact that the sequence in the Definition \ref{def114} is of $L^p$-Cauchy.
    \end{proof}
    If $f$ is a measurable function on $\R_I^\infty$ and $1<p<\infty,$ we define $$||f||_p=\left[ \int_{\R_I^\infty}|f|^pd\lambda_\infty(x)\right]^\frac{1}{p}.$$ 
    \begin{thm}
    If $f \in L^p[\R_I^\infty],$ then the integral of (\ref{eq111}) exists and all theorems that are true for $f \in L^p[\R_I^n],$ also hold for $f \in L^p[\R_I^\infty].$
    \end{thm}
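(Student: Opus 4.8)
The statement has two parts, and the plan is to treat both through the completion structure that defines $L^p[\R_I^\infty]$. For the existence of the integral in (\ref{eq111}), I would start from the defining Cauchy sequence: given $f \in L^p[\R_I^\infty]$ there is $\{f_n\}$ with $f_n \in L^p[\R_I^n]$ and $\|f_n - f\|_p \to 0$, and each integral $\int_{\R_I^\infty} f_n\, d\lambda_\infty$ is already meaningful by the preceding theorem. The key point is that the integral functional $g \mapsto \int_{\R_I^\infty} g\, d\lambda_\infty$ is continuous on $L^p[\widehat{\R}_I^\infty]$ with respect to $\|\cdot\|_p$, which I would deduce from H\"older's inequality exploiting that $\lambda_\infty$ restricts to a probability measure in the essentially bounded tail directions carried by $h_n$. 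The resulting bound on $\left| \int_{\R_I^\infty} f_n\, d\lambda_\infty - \int_{\R_I^\infty} f_m\, d\lambda_\infty \right|$ by a multiple of $\|f_n - f_m\|_p$ forces the real sequence $\{\int_{\R_I^\infty} f_n\, d\lambda_\infty\}$ to be Cauchy, hence convergent, and the same bound shows the limit is independent of the approximating sequence, so (\ref{eq111}) is well defined.

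For the second part, the mechanism I would use is that the inclusions $L^p[\R_I^n] \subset L^p[\R_I^{n+1}]$ are isometric for $\|\cdot\|_p$ (because $h_n$ integrates to $1$ over the tail box), so $L^p[\widehat{\R}_I^\infty] = \bigcup_{n=1}^\infty L^p[\R_I^n]$ inherits every property holding simultaneously on all finite levels, and $L^p[\R_I^\infty]$ is its closure. Concretely, for a standard $L^p$ result $P$ — H\"older, Minkowski, Riesz--Fischer completeness, density of $C_c[\R_I^\infty]$, Clarkson's inequalities, the duality $\bigl(L^p\bigr)^\ast \cong L^{p'}$, and the convergence theorems — I would verify $P$ on each $L^p[\R_I^n]$, transfer it verbatim to the nested union, and then push it to the completion $L^p[\R_I^\infty]$ by approximating each $f \in L^p[\R_I^\infty]$ by $f_n \in L^p[\R_I^n]$ and using continuity of the operations appearing in $P$ with respect to $\|\cdot\|_p$. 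An equivalent and cleaner route, which I would state explicitly, is to invoke the earlier identification $L^p[\widehat{\R}_I^\infty] = L^p[\R_I^\infty]$ together with the fact that $\lambda_\infty$ is a genuine measure on $\mcB[\R_I^\infty]$ equivalent to Lebesgue measure on each $\R_I^n$: this exhibits $L^p[\R_I^\infty]$ as an honest Lebesgue space $L^p(\lambda_\infty)$, whereupon every theorem of abstract $L^p$ theory over a measure space applies at once.

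The main obstacle is that ``all theorems that are true for $f \in L^p[\R_I^n]$'' is a schema rather than a single proposition, so the real content is articulating and justifying the transfer mechanism rather than performing one computation. The delicate cases are results whose statements involve operations that are not manifestly continuous in the $\|\cdot\|_p$ norm — most notably pointwise almost-everywhere assertions and any result quantifying tail behaviour at $\infty$. For these the level-by-level-plus-density argument alone is insufficient, and I would fall back on the identification with the genuine Lebesgue space $L^p(\lambda_\infty)$, where such statements are already part of the abstract theory; making that identification fully rigorous, rather than merely asserting the parallel with $\R^n$, is where the bulk of the care is needed.
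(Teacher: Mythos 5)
Your second part (the transfer schema) is in the same spirit as the paper, which in fact offers no real proof of this theorem at all --- only a remark that the proofs "are similar as for $L^p[\R^n]$" --- so your articulation of the union-plus-closure mechanism, and the fallback identification of $L^p[\R_I^\infty]$ with an honest Lebesgue space $L^p(\lambda_\infty)$, is if anything more careful than the source. The genuine gap is in your first part. The claimed continuity of $g \mapsto \int_{\R_I^\infty} g\, d\lambda_\infty$ on $L^p[\widehat{\R}_I^\infty]$ with respect to $\|\cdot\|_p$ is false for $p>1$: H\"older would require the constant function $1$ to lie in $L^q$, but $\lambda_\infty(\R_I^n) = \lambda_n(\R^n) = \infty$, so $\|1\|_{L^q[\R_I^n]} = \infty$ for every $n$. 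The fact that the tail factor $\Pi_{k=n+1}^{\infty}[\frac{-1}{2},\frac{1}{2}]$ carries a probability measure does not help, because the first $n$ coordinates still range over all of $\R^n$ with infinite Lebesgue measure. Concretely, take $n=1$ and $f = f^1 \otimes h_1$ with $f^1(x_1) = (1+|x_1|)^{-1}$: then $f \in L^p[\R_I^1]$ for every $p>1$, yet $\int_{\R_I^\infty} f\, d\lambda_\infty = \int_{\R}(1+|x_1|)^{-1}\,dx_1 = \infty$. So the integral functional is not even everywhere defined on $L^p[\widehat{\R}_I^\infty]$, let alone bounded, and the estimate $\left|\int f_n\,d\lambda_\infty - \int f_m\,d\lambda_\infty\right| \leq C\|f_n - f_m\|_p$, which is the engine of your Cauchy argument, fails.

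This gap cannot be closed within your framework (nor within the paper's): for $p>1$ an $L^p$-Cauchy sequence can consist of functions whose integrals are infinite or diverge, so the limit in (\ref{eq111}) genuinely need not exist unless one adds a hypothesis such as $f \in L^1[\R_I^\infty] \cap L^p[\R_I^\infty]$, or demands that the approximating sequence be Cauchy in the $L^1$-norm as well. Your argument is valid exactly when $p=1$, where $\left|\int g\, d\lambda_\infty\right| \leq \|g\|_1$ holds trivially. For what it is worth, the paper's own one-line justification of the earlier existence theorem ("the sequence in Definition \ref{def114} is $L^p$-Cauchy") suffers from the same defect; but since the entire point of your first paragraph was to supply the missing continuity, and that continuity is false, the proposal as written does not establish the statement.
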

   
    \begin{thm}
    $C_c[\R_I^\infty] $ is dense in $L^p[\R_I^\infty].$
    \end{thm}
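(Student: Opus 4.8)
The plan is to reduce the statement to the finite-dimensional density of $C_c[\R_I^n]$ in $L^p[\R_I^n]$, which is available here because, by the Remark above, every theorem valid for $L^p[\R^n]$ remains valid for $L^p[\R_I^n]$. I would argue directly with an $\e$-approximation. Given $f \in L^p[\R_I^\infty]$ and $\e > 0$, the very definition of $L^p[\R_I^\infty]$ furnishes a Cauchy sequence $\{f_n\}$ with $f_n \in L^p[\R_I^n]$ and $||f_n - f||_p \to 0$; I would choose $N$ so large that $||f_N - f||_p < \e/2$.

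Next, since $C_c[\R_I^N]$ is dense in $L^p[\R_I^N]$, I would select $g \in C_c[\R_I^N]$ with $||g - f_N||_{L^p[\R_I^N]} < \e/2$. The function $g$, viewed as the $e_N$-tame function $g(x) = g^N(\overline{x}) \otimes h_N(\widehat{x})$, lies in $C_c[\widehat{\R}_I^\infty] \subset C_c[\R_I^\infty]$. By the triangle inequality $||g - f||_p \le ||g - f_N||_p + ||f_N - f||_p < \e$, which shows that $g$ approximates $f$ within $\e$ in $L^p[\R_I^\infty]$; since $\e$ and $f$ are arbitrary, $C_c[\R_I^\infty]$ is dense. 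Equivalently, one may phrase the whole argument as a telescoping of inclusions, passing from $C_c[\R_I^n] \subset L^p[\R_I^n]$ (dense) to $C_c[\widehat{\R}_I^\infty] \subset L^p[\widehat{\R}_I^\infty]$ by taking unions and then to $C_c[\R_I^\infty] \subset L^p[\R_I^\infty]$ by taking $L^p$-closures, exactly as in the proof that $C_0^\infty[\R_I^\infty]$ bounded functions are dense in $L^1[\R_I^\infty]$.

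The one point requiring genuine care, and the only place where the special structure of $\R_I^\infty$ enters, is the consistency of the $L^p$-norm across dimensions: I must check that the norm of an $e_N$-tame function computed on $\R_I^\infty$ coincides with its norm computed on $\R_I^N$, so that the estimate $||g - f_N||_{L^p[\R_I^N]} < \e/2$ transfers to $||g - f_N||_p < \e/2$. This is exactly where the product-measure identity $\lambda_\infty(\mfA_n) = \lambda_n(\mfA)$ is used, together with $h_N = \chi_{I_N}$ and $\lambda_I(I) = 1$, which give $\int |h_N|^p \, d\lambda_\infty = 1$ so that the infinite tail contributes the factor $1$ and the two norms agree. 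Beyond this consistency check the argument is a routine triangle-inequality estimate, and no analytic difficulty arises that was not already present in the $L^1$ case.
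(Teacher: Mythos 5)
Your proof is correct, and it is essentially the approach the paper takes: the paper actually states this theorem with no proof at all, but its model for such statements is the proof that $C_0^\infty[\R_I^\infty]$ bounded functions are dense in $L^1[\R_I^\infty]$, which is exactly the union-and-closure telescope you describe as the ``equivalent phrasing'' of your argument. Your explicit $\e/2$ unwinding, and in particular the check that the $L^p$-norm of an $e_N$-tame function over $\R_I^\infty$ agrees with its norm over $\R_I^N$ (via $\lambda_\infty(\mfA_n)=\lambda_n(\mfA)$ and $\int |h_N|^p\,d\lambda_\infty = 1$), supplies precisely the consistency point that the paper's telescoping style glosses over, so your write-up is if anything more complete than the source.
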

    \begin{thm}
    $C_0^\infty[\R_I^\infty] $ is dense in $L^p[\R_I^\infty].$
    \end{thm}
    \subsection{ Theory of $K{S^p}[\R_I^\infty]$}
   Gill and Zachary in \cite{GZ} introduced a new Banach space of non-absolute integrable funcions. Kuelbs lemma (see \cite{KB}) was the main tool of their work with approach of \cite{VS}, they found a kind of Banach spaces  called Kuelbs-Steadman spaces that contain  Henstock-Kurzweil integrable functions. Construction of $K{S^p}[\R^n]$ was discussed in detailed in their work. We adopt their approach with virtual concept of Lebesgue measure in $\R^\infty.$ We now construct the spaces $K{S^p}[\R_I^\infty],~ 1 \leq p \leq \infty,$ using the same approach that led to the construction of $L^1[\R_I^\infty].$ Since $K{S^p}[\R_I^n] \subset K{S^p}[\R_I^{n+1}],$ we define $K{S^p}[\widehat{\R}_I^\infty] = \bigcup\limits_{n=1}^{\infty}K{S^p}[\R_I^n].$ 
   \begin{Def}
       We say that a measurable function $ f \in K{S^p}[\R_I^\infty],$ for $ 1 \leq p \leq \infty,$ if there is a Cauchy sequence $\{f_n\} \subset K{S^p}[\widehat{\mathbb{R}}_{I}^{\infty}] $ with $ f_n \in K{S^p}[\R_I^n] $ and $\lim\limits_{n \to \infty}f_n(x) = f(x)~ \mu_\infty$-a.e.
       \end{Def}
       The functions in $K{S^p}[\widehat{\R}_I^\infty] $ differ from functions in its closure $K{S^p}[\R_I^\infty],$ by sets of measure zero.
       \begin{thm}
       $K{S^p}[\widehat{\R}_I^\infty]= K{S^p}[\R_I^\infty] $
       \end{thm}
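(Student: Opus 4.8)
The plan is to prove the two set inclusions separately, following the template of Theorem \ref{th18} and its $L^p$ analogue; the only genuinely new point is that the pairing functionals defining the Kuelbs--Steadman norm respect the tower of embeddings $K{S^p}[\R_I^n]\hookrightarrow K{S^p}[\R_I^{n+1}]$. First I would dispose of the trivial inclusion $K{S^p}[\widehat{\R}_I^\infty]\ci K{S^p}[\R_I^\infty]$. If $f\in K{S^p}[\widehat{\R}_I^\infty]=\bu_{n=1}^{\infty}K{S^p}[\R_I^n]$, then $f\in K{S^p}[\R_I^m]$ for some $m\in\N$, and since $K{S^p}[\R_I^m]\ci K{S^p}[\R_I^n]$ for every $n\ge m$, the eventually constant sequence $f_n:=f$ (for $n\ge m$) lies in $K{S^p}[\widehat{\R}_I^\infty]$, is Cauchy, and converges to $f$ both in norm and $\mu_\infty$-a.e. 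Hence $f\in K{S^p}[\R_I^\infty]$.

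The content is in the reverse inclusion $K{S^p}[\R_I^\infty]\ci K{S^p}[\widehat{\R}_I^\infty]$. Take $f\in K{S^p}[\R_I^\infty]$ with a Cauchy sequence $\{f_n\}$, $f_n\in K{S^p}[\R_I^n]$ and $f_n\to f$ $\mu_\infty$-a.e. The structural fact I would establish first is that each inclusion $K{S^p}[\R_I^n]\hookrightarrow K{S^p}[\R_I^{n+1}]$ is \emph{isometric}: writing an order-$n$ tame function as $f^n(\ov{x})\otimes h_n(\widehat{x})$ and using $I_n=[-\tf12,\tf12]\times I_{n+1}$, its image at level $n+1$ is $(f^n\otimes\chi_{[-1/2,1/2]})\otimes h_{n+1}$, and since $\lambda([-\tf12,\tf12])=1$ this map is literally the identity on functions regarded on $\R_I^\infty$ and preserves every pairing $\int_{\R_I^\infty} f\,\ov{\mcE_k}\,d\lambda_\infty$ that enters the Kuelbs--Steadman norm. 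Consequently the tower $\{K{S^p}[\R_I^n]\}$ is isometrically nested, so a Cauchy sequence whose terms stay inside a fixed level $K{S^p}[\R_I^N]$ has its limit there by completeness of that level.

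It then remains to rule out that $f$ depends genuinely on infinitely many coordinates. Here I would use that the Kuelbs--Steadman norm is dominated by the $L^p$-norm and that $L^p[\R_I^n]$ is $K{S^p}$-dense in $K{S^p}[\R_I^n]$, together with the already established identity $L^p[\widehat{\R}_I^\infty]=L^p[\R_I^\infty]$. A diagonal approximation reduces the claim for $\{f_n\}$ to the corresponding statement for $L^p$ tame functions, where the tail factor of every term is pinned to $\chi_{I_n}$. Because replacing the pinned indicator $\chi_{[-1/2,1/2]}$ in a fresh coordinate by any profile bounded away from it costs a fixed, non-vanishing amount of norm (the $L^p$-isometry above makes this cost transparent), a Cauchy sequence cannot activate infinitely many new coordinates; the limiting profile must stabilize at some finite order $N$. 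Thus $f$ agrees $\lambda_\infty$-a.e. with an element of $K{S^p}[\R_I^N]\ci K{S^p}[\widehat{\R}_I^\infty]$, which is the desired inclusion.

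The step I expect to be the main obstacle is making this last assertion precise: quantifying the non-vanishing cost of activating a new coordinate at the level of the weaker Kuelbs--Steadman norm rather than the $L^p$-norm, and converting ``the profile stabilizes'' into an honest a.e. identification of $f$ with a finite-order tame function. I would handle it by first proving stabilization in $L^p$ via $L^p[\widehat{\R}_I^\infty]=L^p[\R_I^\infty]$ and then transporting it to $K{S^p}$ through the continuous dense embedding $L^p[\R_I^\infty]\hookrightarrow K{S^p}[\R_I^\infty]$ and the isometry of the tower; the remaining bookkeeping (passing to an a.e.\ convergent subsequence and matching the $\mu_\infty$- and $\lambda_\infty$-null sets) is routine and parallels the proof of Theorem \ref{th18}.
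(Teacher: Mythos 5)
First, a structural remark: the paper contains no proof of this theorem at all --- it is stated bare, resting on the sentence preceding it and on the analogy with Theorem \ref{th18}, whose proof is deferred to \cite{GM}. Your proposal therefore has to stand on its own, and its central step has a genuine gap. The forward inclusion via eventually constant sequences is fine. The problem is the stabilization claim: it is simply not true that activating a fresh coordinate costs ``a fixed, non-vanishing amount of norm,'' because the cost is proportional to how far the new factor deviates from $\chi_{[-1/2,1/2]}$, and nothing prevents a Cauchy sequence from activating the $k$-th coordinate at cost $2^{-k}$. Concretely, take
$$f_n(x)=\chi_{[0,1]}(x_1)\prod_{k=2}^{n}\left(1+2^{-k}\sin(2\pi x_k)\right)\otimes h_n(\widehat{x}).$$
Each $f_n$ is an order-$n$ tame function in $L^p[\R_I^n]\subseteq K{S^p}[\R_I^n]$, the sequence is Cauchy even in the $L^p[\R_I^\infty]$ norm (hence in $K{S^p}$, whose norm is dominated by the $L^p$ norm), and it converges pointwise to an infinite product whose tail factor $\prod_{k>N}\left(1+2^{-k}\sin(2\pi x_k)\right)$ is non-constant on $I_N$ for every $N$; the limit is therefore not $\lambda_\infty$-a.e.\ equal to any finite-order tame function. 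So Cauchyness plus a norm-cost argument cannot force stabilization at a finite order; whatever content the theorem has must come from the measure-zero identification machinery of \cite{GM}, which your argument never engages.

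The repair you anticipate --- prove stabilization in $L^p$ first via $L^p[\widehat{\R}_I^\infty]=L^p[\R_I^\infty]$ and transport it through the dense embedding --- fails for a direction-of-inequality reason: since $\|\cdot\|_{K{S^p}}$ is weaker than $\|\cdot\|_{L^p}$, a $K{S^p}$-Cauchy sequence (or a diagonal $L^p$-approximation of one) need not be $L^p$-Cauchy, so the $L^p$ identity has nothing to apply to; the dense embedding $L^p\hookrightarrow K{S^p}$ transfers statements in the opposite direction from the one you need. Separately, your isometry claim for the tower is unjustified as written: the test functions at levels $n$ and $n+1$ are indicators of different cube families (edges $\frac{1}{2^{l-1}\sqrt{n}}$ versus $\frac{1}{2^{l-1}\sqrt{n+1}}$), and pairing $f^n\otimes\chi_{[-1/2,1/2]}$ against a level-$(n+1)$ cube integrates the new coordinate over a proper subinterval of $[-1/2,1/2]$, producing functionals that are not among the level-$n$ pairings; at best one gets a continuous embedding. (If instead one fixes a single family $\{\zeta_r\}$ on $\R_I^\infty$ for all levels, as the paper's construction section suggests, the isometry is trivial --- but then that entire paragraph of your proposal does no work.)
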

       \begin{Def}\label{Def17}
     If $ f \in K{S^p}[\R_I^\infty],$ we define the integral of $ f$ by $$ \int_{\R_I^\infty} f(x)d\lambda_\infty(x) = \lim\limits_{n \to \infty}\int_{\R_I^n} f_n(x)d\lambda_\infty(x),$$ where $ f_n \in K{S^p}[\R_I^n]$ is any  Cauchy sequence converging to $f.$
     \end{Def}
     \begin{thm}
   If $ f \in K{S^p}[\R_I^\infty]$, then the integral of $ f$ defined in Definition \ref{Def17} exists and is unique for every $f \in K{S^p}[\R_I^\infty]$.
\end{thm}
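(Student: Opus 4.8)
The plan is to prove the two claims separately: existence of the limit in Definition~\ref{Def17}, and its independence of the chosen Cauchy sequence. Throughout, the element $f\in KS^p[\R_I^\infty]$ comes equipped with a sequence $\{f_n\}$, $f_n\in KS^p[\R_I^n]$, that is $KS^p$--Cauchy and converges $\mu_\infty$--a.e.\ to $f$.

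\textbf{Existence.} For each fixed $n$ the map $g \mapsto \int_{\R_I^n} g\, d\lambda_\infty$ is a bounded linear functional on $KS^p[\R_I^n]$; this is the finite--dimensional property inherited from the Gill--Zachary construction, where the Kuelbs--Steadman norm is assembled from pairings against a countable family that dominates the integration pairing. Under the norm--nonincreasing inclusions $KS^p[\R_I^n]\hookrightarrow KS^p[\R_I^{n+1}]$ these functionals are consistent, so they assemble into a single functional $\Lambda$ on $KS^p[\widehat{\R}_I^\infty]$ with $\Lambda(g)=\int_{\R_I^n} g\, d\lambda_\infty$ for $g\in KS^p[\R_I^n]$ and $|\Lambda(g)|\le M\,\|g\|_{KS^p}$ for a constant $M$ not depending on $n$. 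First I would apply this estimate to the differences $f_n-f_m$, obtaining $\bigl|\int_{\R_I^n} f_n\,d\lambda_\infty-\int_{\R_I^m} f_m\,d\lambda_\infty\bigr|=|\Lambda(f_n-f_m)|\le M\,\|f_n-f_m\|_{KS^p}$. Since $\{f_n\}$ is $KS^p$--Cauchy by the definition of $KS^p[\R_I^\infty]$, the right--hand side tends to $0$, so the scalar sequence $\bigl\{\int_{\R_I^n} f_n\,d\lambda_\infty\bigr\}$ is Cauchy in $\R$ and hence converges by completeness of $\R$. This limit is precisely the integral of Definition~\ref{Def17}.

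\textbf{Uniqueness.} Next I would show the limit does not depend on the representative. Let $\{f_n\}$ and $\{g_n\}$ be two admissible sequences for the same $f$, each $KS^p$--Cauchy and converging $\mu_\infty$--a.e.\ to $f$. By the identification $KS^p[\widehat{\R}_I^\infty]=KS^p[\R_I^\infty]$, both sequences converge in the $KS^p$--norm to the common element $f$, so $\|f_n-g_n\|_{KS^p}\le\|f_n-f\|_{KS^p}+\|f-g_n\|_{KS^p}\to 0$. Applying $\Lambda$ again gives $\bigl|\int_{\R_I^n} f_n\,d\lambda_\infty-\int_{\R_I^n} g_n\,d\lambda_\infty\bigr|=|\Lambda(f_n-g_n)|\le M\,\|f_n-g_n\|_{KS^p}\to 0$, whence the two limits coincide. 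Thus the integral is well defined and unique, as claimed.

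\textbf{Main obstacle.} The crux is the boundedness of $\Lambda$ on $KS^p[\R_I^n]$ together with the uniformity of its norm across $n$; everything else is the standard completion bookkeeping. I expect the hardest point to be checking that the inclusions $KS^p[\R_I^n]\hookrightarrow KS^p[\R_I^{n+1}]$ are norm--nonincreasing and compatible with integration, so that a single constant $M$ works for all $n$ and the functionals glue to one functional on $KS^p[\widehat{\R}_I^\infty]$. A secondary but necessary point is that two Cauchy sequences sharing an a.e.\ limit represent the same element of the completion, which is exactly the content of the theorem $KS^p[\widehat{\R}_I^\infty]=KS^p[\R_I^\infty]$ and yields $\|f_n-g_n\|_{KS^p}\to 0$.
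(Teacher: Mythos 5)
There is a genuine gap, and it sits exactly where you predicted the "main obstacle" would be: the claim that $g \mapsto \int_{\R_I^n} g\, d\lambda_\infty$ is a bounded linear functional on $K{S^p}[\R_I^n]$ (uniformly in $n$) is false. The $K{S^p}$ norm only sees the pairings with the functions $\zeta_r$, which are characteristic functions of cubes $\mathcal{B}_k$ of edge $e_k = \tfrac{1}{2^{l-1}\sqrt{n}} \le \tfrac{1}{\sqrt n}$; since $\sum_r \tau_r = 1$, one has $\|g\|_{K{S^p}} \le \sup_k \bigl|\int_{\mathcal{B}_k} g \, d\lambda_\infty\bigr|$. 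Now take $g_R = \lambda_n\bigl(B(0,R)\bigr)^{-1}\chi_{B(0,R)}$ (normalized indicator of a large ball, viewed in $\R_I^n$). Then $\int_{\R_I^n} g_R\, d\lambda_\infty = 1$ for every $R$, while each small cube captures mass at most $\lambda_n(\mathcal{B}_k)/\lambda_n\bigl(B(0,R)\bigr)$, so $\|g_R\|_{K{S^p}} \to 0$ as $R \to \infty$. Hence no constant $M$ with $|\Lambda(g)| \le M\|g\|_{K{S^p}}$ can exist, the functional $\Lambda$ you build cannot be bounded, and both your existence step (the estimate $|\int f_n - \int f_m| \le M\|f_n - f_m\|_{K{S^p}}$) and your uniqueness step (the estimate $|\int f_n - \int g_n| \le M\|f_n - g_n\|_{K{S^p}}$) collapse. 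This is not a repairable technicality: the whole point of Kuelbs--Steadman spaces is that they carry a norm weak enough to accommodate non-absolutely (HK-) integrable functions, and for precisely that reason the integral over the whole space is not norm-continuous.

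The paper avoids this trap by arguing measure-theoretically rather than functional-analytically: to prove existence it first assumes $f \ge 0$, notes that the approximating sequence can then be chosen increasing (so that the integrals $\int f_n$ form a monotone sequence of reals, which necessarily has a limit), and then treats general $f$ by the standard decomposition into positive and negative parts; uniqueness is deduced from the Cauchy property of the family $\{f_n\}$. Any correct proof here has to exploit structure of this kind (monotonicity, or an equicontinuity/absolute-continuity property of the chosen sequence), because Cauchyness in the $K{S^p}$ norm alone genuinely does not control the integrals, as the counterexample above shows.
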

\begin{proof} Since the family of functions $\{f_n\}$ is Cauchy, it follows that if the integral exists, it is unique.  To prove the existence, follow the standard argument. First assume that $f(x) \ge 0.$  In this case, the sequence can always be chosen  increasing, so that the integral exists.  The general case now follows by the standard decomposition. 
\end{proof}
\subsection{\bf{Construction:}} Fix $n\in\mathbb{N}$ and let $\widehat{\bQ}_{I}^\infty = \lim\limits_{ n \to \infty}\bQ_{I}^n= \bigcup\limits_{k=1}^{\infty}\bQ_{I}^k,$ where $\bQ_{I}^n$ is the set $\{x\in \R_I^n :$  the coordinates of $x$ are rational$\}$. Since this is a countable dense set in $\R_I^n,$ we can arrange it as $\bQ_{I}^n=\{x_1,x_2,...\}$. For each $k$ and $i,$ let $\mathcal{B}_k(x_i)$ be a closed cube in $\mathbb{R}^n$ centered at $x_i$ with sides parallel to the coordinate axes and edge $e_k= \frac{1}{2^{k} \sqrt n}.$ Now choose the natural order which maps $\N \times \N$ bijectively to $\N,$ and let $\{\mathcal{B}_k:~k \in \N\}$ be the resulting set of (all) closed cubes $$\{\mathcal{B}_k(x_i)|~(k,i) \in \N\times\N \}$$  centered at a point in $\bQ_{I}^n.$ Let $\zeta_k(x) $ be the characteristic function of $\mathcal{B}_k,$ so that $\zeta_k(x) \in L^p[\R_I^\infty] \cap L^\infty[\R_I^\infty]$ for $ 1 \leq p < \infty.$     Define $F_k(.)$ on $L^1[\R_I^\infty]$ by $$F_k(f)= \int_{\R_I^\infty}\zeta_k(x)f(x)d \lambda_\infty(x).$$ Since each $\mathcal{B}_k$ is a cube with sides parallel to the coordinate axes, $F_k(.)$ is well defined for all HK-integrable functions. Also it is a bounded linear functional on $L^p[\R_I^\infty]$ for $ 1 \leq p \leq \infty.$  Fix $\tau_k>0 $ such that $ \sum_{k=1}^{\infty} \tau_k=1$ and defined an inner product $(\cdot)$ on $L^1[\R_I^\infty]$ by $$(f,g)= \sum_{r=1}^{\infty}\tau_k\left[ \int_{\R_I^\infty}\zeta_k(x) f(x)d \lambda_\infty(x)\right]\left[\int_{\R_I^\infty}\zeta_k(y)g(y) d \lambda_\infty(y)\right]^c.$$ The completion of $L^1[\R_I^\infty]$ in  the  inner product is the space  $KS^2[\R_I^\infty].$ To see directly that $K{S^2}[\R_I^\infty] $ contains the HK-integrable functions, observe 

\begin{align*}
||f||_{K{S^2}}^{2} &= \sum_{k=1}^{\infty}\tau_k\left|\int_{\R_I^\infty}\zeta_k(x)f(x) d \lambda_\infty(x)\right|^2 \\&\leq \sup_{k}\left|\int_{\mathcal{B}_k}f(x) d \lambda_\infty(x)\right|^2< \infty.
\end{align*}
So, $ f \in K{S^2}[\R_I^\infty].$

\begin{thm}\label{th21}
   The space $ K{S^2}[\R_I^\infty] $ contains $ L^p[\R_I^\infty]$ for $1 \leq p \leq \infty $ as dense subspace.
   \end{thm}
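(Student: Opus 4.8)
The statement has two halves: that $L^p[\R_I^\infty]$ sits inside $KS^2[\R_I^\infty]$ as a continuous inclusion, and that this copy is dense. The plan is to treat these separately, deriving the inclusion from a single uniform Hölder estimate and the density from a common core of test functions.

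For the inclusion I would start from the explicit norm recorded in the construction,
$$\|f\|_{KS^2}^2 = \sum_{r=1}^{\infty}\tau_r\left|\int_{\R_I^\infty}\zeta_r(x) f(x)\, d\lambda_\infty(x)\right|^2,$$
and estimate each functional $F_r(f) = \int_{\R_I^\infty}\zeta_r f\, d\lambda_\infty$ individually. The crucial structural fact is that every $\zeta_r$ is the characteristic function of a cube $\mathcal{B}_k$ whose edge $e_k = \frac{1}{2^{l-1}\sqrt n}$ forces $\lambda_\infty(\mathcal{B}_k) \le 1$, so that $\|\zeta_r\|_q \le 1$ for every conjugate exponent $q$ and every $r$. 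Hölder's inequality with $\frac1p + \frac1q = 1$ (with the obvious readings at $p=1$ and $p=\infty$) then gives $|F_r(f)| \le \|\zeta_r\|_q\,\|f\|_p \le \|f\|_p$ uniformly in $r$. Substituting into the norm formula and using $\sum_r \tau_r = 1$ yields
$$\|f\|_{KS^2}^2 \le \Big(\sum_{r=1}^{\infty}\tau_r\Big)\|f\|_p^2 = \|f\|_p^2,$$
hence $\|f\|_{KS^2} \le \|f\|_p$. This establishes a continuous embedding $L^p[\R_I^\infty] \hookrightarrow KS^2[\R_I^\infty]$ with constant $1$, for every $1 \le p \le \infty$ at once.

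For density I would exploit that $KS^2[\R_I^\infty]$ is by definition the completion of $L^1[\R_I^\infty]$, so $L^1$ is already $KS^2$-dense. Since $C_c[\R_I^\infty]$ is $L^1$-dense (shown earlier) and the inclusion $L^1 \hookrightarrow KS^2$ is continuous by the estimate above, $C_c[\R_I^\infty]$ is itself dense in $KS^2[\R_I^\infty]$. Now $C_c[\R_I^\infty] \subset L^p[\R_I^\infty]$ for every $1 \le p \le \infty$, because compactly supported continuous functions belong to every $L^p$; hence $L^p$ contains the $KS^2$-dense set $C_c[\R_I^\infty]$ and is therefore $KS^2$-dense. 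I note that routing the argument through $C_c[\R_I^\infty]$ is precisely what makes the case $p=\infty$ go through, where $C_c$ fails to be $L^\infty$-dense but remains $KS^2$-dense.

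The argument is essentially soft once the uniform Hölder bound is in place, so the one point that genuinely needs care is the uniform measure bound $\lambda_\infty(\mathcal{B}_k) \le 1$ on the cubes. Without a ceiling on the cube volumes the factor $\|\zeta_r\|_q$ would not be controlled independently of $r$, and the series defining $\|f\|_{KS^2}$ could fail to converge for $f \in L^p$ with large $p$. I expect verifying this bound from the construction, by tracking the edge lengths $e_k$ together with the fact that the tail factors $I_n$ contribute measure $1$, to be the main technical checkpoint; everything else follows from completeness and the density of $C_c[\R_I^\infty]$.
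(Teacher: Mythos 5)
Your proposal is correct, but it takes a genuinely different route from the paper. The paper's proof never touches the norm formula on $\R_I^\infty$ at all: it quotes the finite-dimensional fact that $K{S^2}[\R_I^n]$ contains $L^p[\R_I^n]$ as a dense subspace, observes that $K{S^2}[\R_I^\infty]$ is the closure of $\bigcup_{n=1}^{\infty}K{S^2}[\R_I^n]$, and concludes that $K{S^2}[\R_I^\infty]$ contains $\bigcup_{n=1}^{\infty}L^p[\R_I^n]$, which is dense in $L^p[\R_I^\infty]$ --- in other words, it lifts the known $\R^n$ result through the inductive-limit structure $\widehat{\R}_I^\infty$, as the paper does throughout. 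You instead argue directly on $\R_I^\infty$: the uniform volume bound $\lambda_\infty(\mathcal{B}_k)\le 1$ (edge $e_k=\tfrac{1}{2^{l-1}\sqrt n}\le 1$ in the $\R^n$ factor, tail factors of measure $1$) combined with H\"{o}lder gives $\|f\|_{K{S^2}}\le\|f\|_p$ with constant $1$ for every $1\le p\le\infty$ at once, and density comes from the completion definition together with the common core $C_c[\R_I^\infty]\ci L^p[\R_I^\infty]$. Your version is self-contained and quantitative, identifies exactly which structural features of the construction are used (cube volumes bounded by $1$, $\sum_r\tau_r=1$), and is the only one of the two that makes the case $p=\infty$ transparent; the paper's version is shorter but uses the finite-dimensional theorem as a black box and leaves implicit the compatibility of all the dense embeddings with the closures. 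One half-step you should still fill: for $p=\infty$ your H\"{o}lder bound shows the $K{S^2}$-norm of $f\in L^\infty$ is finite, but since you take $K{S^2}[\R_I^\infty]$ to be the completion of $L^1[\R_I^\infty]$, membership requires exhibiting $f$ as a $K{S^2}$-limit of $L^1$ functions (for $p<\infty$ this is automatic because $L^1\cap L^p$ is $L^p$-dense); truncating $f$ over an increasing family of bounded sets, and using that each cube $\mathcal{B}_k$ is bounded while the weights $\tau_r$ are summable, supplies the required Cauchy sequence.
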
 
   \begin{proof}
Since $K{S^2}[\R_I^n] $ contains $ L^p[\R_I^n]$ for each $p,~1\leq p \leq \infty$ as dense subspace and  $K{S^2}[\R_I^\infty] $ is the closure of $\bigcup\limits_{n=1}^{\infty} K{S^2}[\R_I^n],$ it follows that $K{S^2}[\R_I^\infty]$ contains the closure of $\bigcup\limits_{n=1}^{\infty}L^p[\R_I^n] $, but this closure is  $L^p[\R_I^\infty]$.
    \end{proof}
  Before proceeding more, we define a norm on $L^p[\R_I^\infty]$ as $$||f||_{K{S^p}[\R_I^\infty]} =\left\{\begin{array}{c}\left(\sum\limits_{k=1}^{\infty}\tau_{k}\left|\int_{\R_{I}^{\infty}}\zeta_{k}(x)f(x)d\lambda_\infty(x)\right|^p\right)^{\frac{1}{p}}, \mbox{~for~} 1\leq p<\infty;\\
  \sup\limits_{k\geq 1}\left|\int_{\R_{I}^{\infty}}\zeta_{k}(x)f(x)d\lambda_\infty(x)\right|, \mbox{~for~} p=\infty \end{array}\right.$$ where $x=(x_1,x_2,x_3,\dots) \in \R_I^\infty.$ It is easy to see that $||f||_{K{S^p}[\R_I^\infty]}$ defines a norm on $ L^p[\R_I^\infty].$ The completion of $L^p[\R_I^\infty] $ with this norm is $K{S^p}[\R_I^\infty]$.
    \begin{thm}
    For each $q$ with $1 \leq q \leq \infty,~ K{S^p}[\R_I^\infty] \supset L^q[\R_I^\infty] $ as a dense continuous embedding.
    \end{thm}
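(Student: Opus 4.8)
The plan is to construct the embedding directly from the defining functionals $f \mapsto \int_{\R_I^\infty} \zeta_r f \, d\lambda_\infty$ rather than through a chain of inclusions of Lebesgue spaces. This is forced on us because $\lambda_\infty$ is an infinite measure on $\R_I^\infty$, so that $L^q[\R_I^\infty] \not\subset L^p[\R_I^\infty]$ when $q \ne p$; one therefore cannot simply compose $L^q \subset L^p \subset K{S^p}$, and must instead exhibit $L^q[\R_I^\infty]$ as a subspace of the $L^p$-completion.

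First I would establish continuity. Writing $\zeta_r = \chi_{\mcB_r}$, each $\mcB_r$ is a closed cube whose edge is at most $m^{-1/2} \le 1$ in its ambient dimension $m$ (the maximum being attained at the coarsest scale $l=1$), so that $\lambda_\infty(\mcB_r) \le m^{-m/2} \le 1$ for every $r$. For $f \in L^q[\R_I^\infty]$, Hölder's inequality with conjugate exponent $q'$ gives
\[
\left| \int_{\mcB_r} f \, d\lambda_\infty \right| \le \|f\|_q \, \lambda_\infty(\mcB_r)^{1/q'} \le \|f\|_q ,
\]
the endpoints $q = 1, \infty$ being immediate. Since $\sum_r \tau_r = 1$, for $1 \le p < \infty$ this yields
\[
\|f\|_{K{S^p}}^p = \sum_r \tau_r \left| \int_{\mcB_r} f \, d\lambda_\infty \right|^p \le \sup_r \left| \int_{\mcB_r} f \, d\lambda_\infty \right|^p \le \|f\|_q^p ,
\]
and the case $p = \infty$ follows identically with the supremum norm. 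Hence $\|f\|_{K{S^p}} \le \|f\|_q$, so the identity map is a continuous contraction.

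Next I would check that each $f \in L^q[\R_I^\infty]$ genuinely names a point of the completion $K{S^p}[\R_I^\infty]$. Truncating $f$ to $\{|f| \le N\}$ intersected with sets of finite measure produces $f_N \in L^p[\R_I^\infty] \cap L^q[\R_I^\infty]$ with $f_N \to f$ in $L^q$; by the step-one estimate $\{f_N\}$ is $K{S^p}$-Cauchy, and its limit is the image of $f$. The map is injective because $\|f\|_{K{S^p}} = 0$ forces $\int_{\mcB_r} f = 0$ for every cube in the countable family, and since these cubes generate the Borel $\sigma$-algebra of $\R_I^\infty$, the integral of $f$ over every set of finite measure vanishes, whence $f = 0$ a.e. For density, observe that $C_c[\R_I^\infty] \subset L^q[\R_I^\infty]$ for every $q$; as $L^p[\R_I^\infty]$ is dense in its completion $K{S^p}[\R_I^\infty]$, as $C_c[\R_I^\infty]$ is $L^p$-dense in $L^p[\R_I^\infty]$, and as the $L^p$-norm dominates the $K{S^p}$-norm by step one with $q = p$, it follows that $C_c[\R_I^\infty]$ is $K{S^p}$-dense, and a fortiori so is the larger space $L^q[\R_I^\infty]$.

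The step I expect to be the main obstacle is the realization argument of the third paragraph: since $L^q \not\subset L^p$, one must argue carefully that an arbitrary $L^q$-function lies in the $L^p$-completion, which requires an explicit $L^p \cap L^q$ approximating sequence together with completeness and the continuity bound. By contrast, the continuity estimate itself is routine once the uniform bound $\lambda_\infty(\mcB_r) \le 1$ is observed.
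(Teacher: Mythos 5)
Your proposal is correct in its main thrust but takes a genuinely different route from the paper. The paper's proof is a single sentence: it invokes the finite-dimensional fact that $K{S^p}[\R_I^n] \supset L^q[\R_I^n]$ is a dense continuous embedding and lifts it to $\R_I^\infty$ through the union-and-closure construction, exactly as in its earlier proof that $K{S^2}[\R_I^\infty]$ contains $L^p[\R_I^\infty]$ densely (Theorem \ref{th21}). You instead argue directly in the infinite-dimensional setting: the H\"older bound $\lt|\int_{\mcB_r} f\, d\lambda_\infty\rt| \leq \|f\|_q\,\lambda_\infty(\mcB_r)^{1/q'} \leq \|f\|_q$ together with $\sum_r \tau_r = 1$ gives the contraction $\|f\|_{K{S^p}} \leq \|f\|_q$; truncation realizes an arbitrary $L^q$ function as a point of the completion of $L^p$; injectivity comes from the cubes forming a differentiation family; and density is routed through $C_c[\R_I^\infty]$. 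Your version is more self-contained (it re-derives the Gill--Zachary estimate rather than citing it) and it is the only one of the two that confronts the actual difficulty you correctly identify: since $\lambda_\infty$ is an infinite measure, $L^q[\R_I^\infty] \not\subset L^p[\R_I^\infty]$, so membership of $L^q$ functions in the $L^p$-completion needs an argument; the paper's one-line proof silently skips this. What the paper's approach buys is brevity and automatic inheritance of whatever holds in finite dimensions.

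One step of yours fails as written: the endpoint $q = \infty$ (with $p < \infty$). If $f \in L^\infty[\R_I^\infty]$ and $f_N = f\chi_{E_N}$ with $\lambda_\infty(E_N) < \infty$, then $f_N \not\to f$ in $L^\infty$ in general, since the essential sup of $f$ off a finite-measure set need not be small; so you cannot deduce $K{S^p}$-Cauchyness of $\{f_N\}$ from the contraction estimate with exponent $q = \infty$. The repair is to estimate the $K{S^p}$ norm directly: for a nested exhaustion $E_N \subset E_M$,
\begin{equation*}
\|f_M - f_N\|_{K{S^p}}^{p} \;\leq\; \sum_{r=1}^{\infty} \tau_r \, \|f\|_\infty^{p}\, \lambda_\infty(\mcB_r \setminus E_N)^{p},
\end{equation*}
where each term tends to zero as $N \to \infty$ (each cube $\mcB_r$ has finite measure) and the series is dominated by $\sum_r \tau_r \|f\|_\infty^p < \infty$ since $\lambda_\infty(\mcB_r) \leq 1$; dominated convergence for series then gives Cauchyness uniformly in $M > N$. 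With this patch your argument covers the full range $1 \leq q \leq \infty$ claimed by the theorem. A lesser caveat: your injectivity step ("the cubes generate the Borel $\sigma$-algebra of $\R_I^\infty$") strictly holds only modulo $\lambda_\infty$-null sets, because the measure is carried on $\widehat{\R}_I^\infty$; that is enough for the a.e. conclusion, and is in any case more rigor than the paper supplies.
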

 \begin{proof}
 This is easily deduced from the fact that  $ K{S^p}[\R_I^n] \supset L^q[\R_I^n] $ as a dense continuous embedding for each $q,~1 \leq q \leq \infty.$ 
 \end{proof}
\begin{thm}\label{th25}
  For $ 1 \leq p \leq \infty,$ we have 
  \begin{enumerate} 
  \item  If $ f_n \to f$ weakly in $ L^p[\R_I^\infty]$ then $ f_n \to f $ strongly in $ K{S^p}[\R_I^\infty].$
  \item  If $ 1<p<\infty $, then $ K{S^p}[\R_I^\infty]$ is uniformly convex.
  \item  If $ 1<p<\infty $ and $\frac{1}{p}+ \frac{1}{q}=1,$ then the dual space of $ K{S^p}[\R_I^\infty] $ is $ K{S^q}[\R_I^\infty].$
  \item  $ K{S^{\infty}}[\R_I^\infty] \subset K{S^p}[\R_I^\infty] $ for $ 1\leq p< \infty.$
  \item $ C_c[\R_I^\infty] $ is dense in $ K{S^p}[\R_I^\infty].$
  \end{enumerate}  
  \end{thm}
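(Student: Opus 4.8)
The plan is to exploit the fact that the $K{S^p}$-norm is, by construction, an $\ell^p$-type norm in the ``coordinates'' $c_r(f) := \int_{\R_I^\infty}\zeta_r(x)f(x)\,d\lambda_\infty(x)$, namely
$$\|f\|_{K{S^p}[\R_I^\infty]}^p = \sum_{r=1}^\infty \tau_r\,|c_r(f)|^p, \qquad 1\le p<\infty,$$
with the supremum version for $p=\infty$. Thus the linear map $Tf=\big(\tau_r^{1/p}c_r(f)\big)_{r\ge 1}$ is an isometry of $K{S^p}[\R_I^\infty]$ into $\ell^p$, and since $K{S^p}[\R_I^\infty]$ is complete its image $M:=T\big(K{S^p}[\R_I^\infty]\big)$ is a closed subspace of $\ell^p$. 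Several assertions then reduce to known properties of $\ell^p$ through this isometry. Throughout I use that the construction fixes the weights so that $\sum_r \tau_r\|\zeta_r\|_q^p<\infty$, which is exactly what underlies the continuous embedding $L^q[\R_I^\infty]\hookrightarrow K{S^p}[\R_I^\infty]$ of the preceding theorem.

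For (1), fix $1\le p<\infty$ and suppose $f_n\to f$ weakly in $L^p[\R_I^\infty]$. Each $\zeta_r$ is the characteristic function of a bounded cube $\mathcal{B}_k$, hence lies in $L^q[\R_I^\infty]$, so weak convergence gives $c_r(f_n)\to c_r(f)$ for every fixed $r$. By the uniform boundedness principle a weakly convergent sequence is norm bounded, $\sup_n\|f_n\|_p=:M<\infty$, whence $\tau_r|c_r(f_n)-c_r(f)|^p\le \tau_r\|\zeta_r\|_q^p(M+\|f\|_p)^p$, a summable majorant independent of $n$. Dominated convergence for series then yields $\|f_n-f\|_{K{S^p}}^p=\sum_r\tau_r|c_r(f_n)-c_r(f)|^p\to 0$, i.e. strong convergence in $K{S^p}[\R_I^\infty]$. (This argument is confined to $1\le p<\infty$; the supremum defining the $p=\infty$ norm is not reached by such a termwise passage.)

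For (2) and (3) I would work entirely through the isometry $T$. For (2), $\ell^p$ is uniformly convex for $1<p<\infty$ by Clarkson's inequalities, and a closed subspace of a uniformly convex space is uniformly convex; since $M$ is closed and isometric to $K{S^p}[\R_I^\infty]$, the latter is uniformly convex. For (3), uniform convexity gives reflexivity (Milman--Pettis), and the pairing $\langle f,g\rangle:=\sum_r\tau_r c_r(f)\overline{c_r(g)}$ satisfies, by H\"older on the weighted sequences, $|\langle f,g\rangle|\le\|f\|_{K{S^p}}\|g\|_{K{S^q}}$, so it induces a bounded map $K{S^q}[\R_I^\infty]\to \big(K{S^p}[\R_I^\infty]\big)^*$. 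Showing that this map is an isometric isomorphism is where the real work lies: it is the infinite-dimensional analogue of the Gill--Zachary duality for $K{S^p}[\R^n]$, and I would prove surjectivity by combining the reflexivity just obtained with non-degeneracy of the pairing, realizing $(K{S^p})^*\cong\ell^q/M^\perp$ and identifying this quotient with the copy of $K{S^q}$ sitting inside $\ell^q$. This identification is the main obstacle, since $M$ is a \emph{proper} subspace of $\ell^p$ (overlapping cubes produce relations among the $\zeta_r$, so $M^\perp\ne\{0\}$), and one must verify that the pairing stays non-degenerate on $K{S^q}$.

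Parts (4) and (5) are direct. For (4), since $\sum_r\tau_r=1$,
$$\|f\|_{K{S^p}}^p=\sum_r\tau_r|c_r(f)|^p\le\Big(\sup_r|c_r(f)|\Big)^p\sum_r\tau_r=\|f\|_{K{S^\infty}}^p,$$
so every $K{S^\infty}$-Cauchy sequence is $K{S^p}$-Cauchy and the inclusion $K{S^\infty}[\R_I^\infty]\subset K{S^p}[\R_I^\infty]$ holds as a norm-decreasing embedding for $1\le p<\infty$. For (5), I would chain two density facts: $C_c[\R_I^\infty]$ is dense in $L^p[\R_I^\infty]$ (already established) and $L^p[\R_I^\infty]$ is dense in $K{S^p}[\R_I^\infty]$ (it is its completion). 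Because the embedding $L^p\hookrightarrow K{S^p}$ is continuous, $\|\cdot\|_{K{S^p}}\le C\|\cdot\|_p$, so $L^p$-density of $C_c$ upgrades to $K{S^p}$-density, and transitivity of density through the continuous embedding gives $C_c[\R_I^\infty]$ dense in $K{S^p}[\R_I^\infty]$.
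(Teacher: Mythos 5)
The paper states Theorem \ref{th25} without giving any proof at all (it is presented as carrying over from the finite--dimensional $K{S^p}[\R^n]$ theory of Gill--Zachary via the inductive limit construction), so your proposal can only be judged on its own merits. On those merits, parts (1), (2), (4) and (5) are genuinely proved. The isometry $Tf=(\tau_r^{1/p}c_r(f))_r$ of $K{S^p}[\R_I^\infty]$ onto a closed subspace $M$ of $\ell^p$ is legitimate, since it is exactly the paper's definition of the norm; the dominated-convergence-for-series argument in (1) is correct, and your standing summability hypothesis $\sum_r\tau_r\|\zeta_r\|_q^p<\infty$ holds automatically because every cube $\mathcal{B}_k$ in the construction has measure at most $1$ (so $\|\zeta_r\|_q\le 1$) and $\sum_r\tau_r=1$; the norm comparison in (4) is correct; and the two-step density upgrade in (5) through the continuous embedding $L^p\hookrightarrow K{S^p}$ is sound. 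Your restriction of (1) to $p<\infty$ is an honest and necessary caveat: pointwise convergence of the coordinates $c_r$ cannot control the supremum norm of $K{S^\infty}$, so the endpoint case would need a different argument (the paper's blanket ``$1\le p\le\infty$'' glosses over this).

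The genuine gap is part (3), and you have pinpointed it yourself without closing it. Writing $M=T(K{S^p}[\R_I^\infty])\subset\ell^p$, subspace duality gives $(K{S^p})^*\cong M^*\cong\ell^q/M^\perp$ isometrically, and your pairing sends $g\in K{S^q}$ to the functional on $M$ induced by the sequence $\bigl(\tau_r^{1/q}\overline{c_r(g)}\bigr)_r\in\ell^q$. For this to be an isometric isomorphism onto the dual you need three things, none of which follows from H\"older or from the reflexivity obtained in (2): injectivity, i.e.\ the image of $K{S^q}$ in $\ell^q$ meets $M^\perp$ only at $0$; surjectivity, i.e.\ that image together with $M^\perp$ spans $\ell^q$; and agreement of the quotient norm on $\ell^q/M^\perp$ with the $K{S^q}$ norm. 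Moreover $M^\perp$ really is nontrivial: a cube of the family with edge $2e$ decomposes into $2^n$ family cubes of edge $e$, so $c_{\mathrm{big}}(f)=\sum c_{\mathrm{small}}(f)$ for every $f$, producing a nonzero annihilating sequence. So the quotient is a genuine quotient, and identifying it with $K{S^q}$ requires work specific to the family $\{\zeta_r\}$; reflexivity only tells you the dual is \emph{some} reflexive space, not which one. As it stands, part (3) of your proposal is an announced strategy with an acknowledged unresolved obstruction, not a proof --- which, to be fair, still leaves you ahead of the paper, which offers no argument for any of the five parts.
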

\section*{Our goal of the article:} In this article we first observe that the weak differentiability  in $L^p[\R_I^n]$ is the strong differentiability in $K{S^p}[\R_I^n]$ and this is also true when we replace $\R_I^\infty $ by $\R_I^n.$ 
Secondly, our purpose is to claim that weakly differentiability in $ W^{k,p}[\R_I^n]$ is strongly differentiability in $W{S^{k,p}}[\R_I^n].$ Finally we show that weakly convergent sequences in $W^{k,p}[\R_I^n]$ and $W^{k,p}[\R_I^\infty]$ are strongly convergent in $W{S^{k,p}}[\R_I^n]$ and $W{S^{k,p}}[\R_I^\infty],$ respectively. As an application, in the last section we found sufficient conditions to solve the equation $\nabla\cdot F= f,$ for $f$  an element of the subspace $K{S^p}[\R_I^n]$ of the HK-Sobolev  space  $W{S^{k,p}}[\R_I^n] $, with the help of the Fourier transformation.  
\section{Meaning of $\D^k f(x)$ when $x \in \R_I^\infty$}
The test functions $\mcD[\R_I^n]$  on $\R_I^n$ are similar as test functions on $\R^n,$ so ignore  the detailed of the test functions on $\R_I^n.$

We denote   test functions on $\R_I^\infty$ as $\mcD[\R_I^\infty],$ to construct this spaces  we use  the same approach that led to $L^1[\R_I^\infty]$ in subsection $1.2.$ Since $\mcD[\R_I^n] \subset \mcD[\R_I^{n+1}],~ $ we define $\mcD[\widehat{\R}_I^\infty] = \cup_{n=1}^{\infty}\mcD[\R_I^n].$
 \begin{Def}
  We say that a measurable function $ f \in \mcD[\R_I^\infty] $ if and only if there exists a sequence of functions $\{f_m \} \subset \mcD[\widehat{\R_I^\infty}] = \bigcup\limits_{n=1}^{\infty}\mcD[\R_I^n]$  and a compact set $K \subset \R_I^\infty,$ which contains the support of $f -f_m$ for all $m,$ and $\mathbb{D}^\alpha f_m \to \mathbb{D}^\alpha f$ uniformly on $K,$ for every multi index $ \alpha \in \N_0^\infty.$   We call the topology of $\mcD[\R_I^\infty] $ as  the compact sequential limit topology.
  \end{Def}
  \begin{thm}\label{th}
  For each $p,~1 \leq p \leq \infty,$ then test function $\mcD[\R_I^n] \subset K{S^p}[\R_I^n] $ as a continuous embedding.
  \end{thm}
  \begin{proof}
  Proof is similar as the proof of the Theorem 3.47 of \cite{GZ}.
  \end{proof}
  \begin{rem}
  The Theorem (\ref{th}) is hold for $\R_I^\infty.$
  \end{rem}
The mollifiers are used in distribution theory to create sequences of smooth functions that approximate non smooth functions via convolution. Sergie Sobolev \cite{Sobolev} in the year 1938 used mollifier functions in his work Sobolev embedding theorem. Modern approach of mollifier was introduced by Kurt Otto Friedrichs \cite{Friedrichs} in the year 1944.
\begin{Def}
(Friedrichs's Definition) Mollifier  identified  the convolution operator as 
\begin{eqnarray*}
\phi_\epsilon(f)(x)= \int_{\R^n}\varphi_\epsilon(x-y)f(y)dy
\end{eqnarray*}
where $\varphi_\epsilon(x)=\epsilon^{-n}\varphi(\frac{x}{\epsilon})$ and $\varphi$ is a smooth function satisfying 
 \begin{enumerate}
 \item $\varphi(x)\geq 0 $ for all $ x \in \R^n.$
 \item $\varphi(x)=\mu(|x|)$ for some infinitely differentiable function $\mu: \R^{+} \to \R.$
 \end{enumerate}
\end{Def}
To construct mollifier in $\R_I^\infty,$ for each $\epsilon > 0,$ let $\varphi_\epsilon \in C_0^\infty[\R_I^\infty]$ be given with the property $$\varphi_\epsilon \geq 0,~\mbox{supp}(\varphi_\epsilon) \subset \{ x \in \R_I^\infty~: |x| \leq \epsilon \},~\int \varphi_\epsilon =1 $$ such functions  can be constructed (see page 32 \cite{S}),
 for example, by taking an appropiate multiple of $$\varphi_\epsilon(x) =\left \{\begin{array}{c} \exp(|x|^2 -\epsilon^{2})^{-1} \mbox{~,~} |x| < \epsilon ;\\
  0 \mbox{~,~} |x| \geq \epsilon \end{array}\right.$$
  Let $ f \in L^1[\G],$ where $\G $ is open in $\R_I^\infty.$ Suppose that the support of $f $ satisfies $\mbox{supp}(f) \subset \subset \G$ (compact support),  then the distance from $\mbox{supp}(f) $ to $\partial \G $ is a positive number $\Delta.$ We extend $ f $ as zero on complement of $\G$ and also we denote the extension in $L^1[\R_I^\infty]$ by $f.$ Define for each $\epsilon$ the mollifier:

 \begin{equation}
  f_\epsilon(x)= \int_{\R_I^\infty}f(x-y)\varphi_\epsilon(y)d\lambda_\infty,~x \in \R_I^\infty.
  \end{equation}
   From now on we consider functions $f \in K{S^p}$ so, $f=0$ almost everywhere. We obtain the following
   lemma  
  \begin{lem}
  \begin{enumerate}
  \item For each $\epsilon >0,~supp(f_\epsilon) \subset supp(f)+ \{y~: |y| \leq \epsilon \} $ and $f_\epsilon \in C^\infty[\R_I^\infty].$
  \item If $ f \in C_0[\G],~~$ then $f_\epsilon \to f $ uniformly on $\G.$ If $f \in K{S^p}[\G],~1 \leq p < \infty$ then $||f_\epsilon||_{K{S^p}[\G]} \leq ||f||_{KS^p[\G]}$ and $f_\epsilon \to f $ in $K{S^p}[\G].$
  \end{enumerate}
  \end{lem}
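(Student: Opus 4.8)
The plan is to prove the two mollifier properties by transporting the classical finite-dimensional results through the limiting construction of $\R_I^\infty$, exactly as the earlier theorems (e.g. Theorem \ref{th18}) were established. First I would record the elementary support estimate in part (1). Since $f_\epsilon(x)=\int f(x-y)\varphi_\epsilon(y)\,d\lambda_\infty$ and $\varphi_\epsilon$ is supported in $\{|y|\le\epsilon\}$, the integrand vanishes unless $x-y\in\mathrm{supp}(f)$ and $|y|\le\epsilon$, which forces $x\in\mathrm{supp}(f)+\{|y|\le\epsilon\}$; this is a direct set-theoretic inclusion that carries over verbatim from $\R^n$ because the measure $\lambda_\infty$ restricts to $n$-dimensional Lebesgue measure on each $\R_I^n$. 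For smoothness, I would differentiate under the integral sign: because $\varphi_\epsilon\in C_0^\infty[\R_I^\infty]$, every difference quotient of $f_\epsilon$ can be passed onto $\varphi_\epsilon$, and the uniform bounds on the derivatives of $\varphi_\epsilon$ together with the compact support justify the interchange, giving $f_\epsilon\in C^\infty[\R_I^\infty]$.

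For part (2), I would first treat the case $f\in C_0[\G]$. On a compact set $f$ is uniformly continuous, so writing $f_\epsilon(x)-f(x)=\int(f(x-y)-f(x))\varphi_\epsilon(y)\,d\lambda_\infty$ and using $\int\varphi_\epsilon=1$, the uniform modulus of continuity controls $|f_\epsilon(x)-f(x)|$ uniformly in $x$, yielding uniform convergence on $\G$. For the $K{S^p}$ norm bound, the key observation is that $f_\epsilon$ is a convolution average of translates of $f$ against a probability density $\varphi_\epsilon$; I would apply the integral (Minkowski/Jensen) inequality in the $K{S^p}[\G]$ norm. Because each functional $F_k(g)=\int\zeta_r(x)g(x)\,d\lambda_\infty$ is linear and the $K{S^p}$ norm is built from the $\ell^p$-type aggregate of these functionals weighted by $\tau_r$, translation acts benignly and the averaging inequality gives $\|f_\epsilon\|_{K{S^p}[\G]}\le\|f\|_{K{S^p}[\G]}$.

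The convergence $f_\epsilon\to f$ in $K{S^p}[\G]$ for $1\le p<\infty$ I would obtain by the standard density argument: by Theorem \ref{th25}(5) (or the corresponding finite-$n$ statement) the space $C_c[\G]$ is dense in $K{S^p}[\G]$, so given $f$ I pick $g\in C_c[\G]$ with $\|f-g\|_{K{S^p}}$ small, use the already-established uniform-hence-$K{S^p}$ convergence $g_\epsilon\to g$, and combine with the uniform bound $\|f_\epsilon-g_\epsilon\|_{K{S^p}}=\|(f-g)_\epsilon\|_{K{S^p}}\le\|f-g\|_{K{S^p}}$ via a three-$\epsilon$ estimate. Throughout, the genuinely infinite-dimensional content is handled by reducing to $\R_I^n$: for $e_n$-tame functions everything is literally the $\R^n$ theory tensored with the characteristic factor $h_n(\widehat x)$, and a general $f\in K{S^p}[\R_I^\infty]$ is a $K{S^p}$-limit of such tame functions, so the inequalities pass to the limit by continuity of the norm.

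The main obstacle I anticipate is making the norm-contraction step $\|f_\epsilon\|_{K{S^p}}\le\|f\|_{K{S^p}}$ fully rigorous, since $K{S^p}$ is not an ordinary $L^p$ space and the convolution is performed in the $x$-variable while the norm is an $\ell^p$ aggregate over the fixed test functionals $\zeta_r$; one must verify that convolution by $\varphi_\epsilon$ commutes appropriately with the functionals $F_k$ (equivalently, that $F_k(f_\epsilon)$ is itself an $\varphi_\epsilon$-average of values $F_k(\tau_y f)$) so that Jensen's inequality applies coordinate-wise before summing in $r$. A secondary subtlety is confirming that differentiation under the integral is legitimate in this infinite-dimensional setting, which I would justify by restricting to the tame approximants where only finitely many coordinates are active.
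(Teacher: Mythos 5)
Your part (1) and the uniform-convergence half of part (2) are standard and unobjectionable. The genuine gap is in the norm-contraction step, which you yourself flag as the main obstacle but then dispose of with the phrase ``translation acts benignly'' --- and that is exactly the point where the argument breaks. Carrying out your plan: Fubini does give $F_r(f_\epsilon)=\int\varphi_\epsilon(y)F_r(\tau_y f)\,d\lambda_\infty(y)$ (where $\tau_y f=f(\cdot-y)$), and since $\varphi_\epsilon\,d\lambda_\infty$ is a probability measure, Jensen's inequality yields
$$||f_\epsilon||_{K{S^p}}^p\;\leq\;\int\varphi_\epsilon(y)\,||\tau_y f||_{K{S^p}}^p\,d\lambda_\infty(y).$$
To conclude you need $||\tau_y f||_{K{S^p}}\leq||f||_{K{S^p}}$ for $|y|\leq\epsilon$, i.e.\ that translations are contractions on $K{S^p}$. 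That is precisely what makes Young's inequality work in $L^p$ (translation invariance of Lebesgue measure), but it is false in $K{S^p}$: the norm is computed against a \emph{fixed} countable family of cubes $\mathcal{B}_k$ centered at rational points, weighted by unequal coefficients $\tau_r$. Translating $f$ moves its mass relative to these fixed cubes. Concretely, take $f\geq 0$ of total integral $c$ concentrated in a tiny ball just outside a cube carrying a dominant weight $\tau_1$ (the construction permits any $\tau_r>0$ with $\sum_r\tau_r=1$). Then $F_1(f)=0$, so $||f||_{K{S^p}}^p\leq(1-\tau_1)c^p$, while mollifying at scale $\epsilon$ comparable to the distance pushes a fixed fraction $\theta$ of the mass into $\mathcal{B}_1$, so $||f_\epsilon||_{K{S^p}}^p\geq\tau_1\theta^pc^p$; for $\tau_1$ close to $1$ this exceeds $||f||_{K{S^p}}^p$. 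So the contraction step is not merely unjustified --- it is false as stated for this route, and your subsequent three-$\epsilon$ density argument collapses with it, since it invokes the same bound in the form $||(f-g)_\epsilon||_{K{S^p}}\leq||f-g||_{K{S^p}}$.

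For comparison, the paper never attempts your computation: it disposes of (1) by citing Lemma 1.1 of Showalter \cite{S}, and of (2) by invoking the classical mollifier lemma in $L^p[\G]$ (Lemma 1.2 of \cite{S}) together with the continuous dense embedding $L^p[\G]\subset K{S^p}[\G]$, so that every convolution estimate is carried out inside $L^p$, where translation invariance is available, and only the end result is transported. (Whether that transport honestly delivers the stated inequality $||f_\epsilon||_{K{S^p}[\G]}\leq||f||_{K{S^p}[\G]}$, as opposed to convergence $f_\epsilon\to f$ with a constant from the embedding, is itself left vague by the paper; but it at least avoids running Young's inequality inside $K{S^p}$, which your proposal cannot do.) If you want to salvage a self-contained argument, you must either restrict the norm bound to the $L^p$ scale and weaken the $K{S^p}$ claim to an inequality with an embedding constant, or establish a uniform-in-$\epsilon$ bound for the mollification operators on $K{S^p}$ by some means other than translation invariance --- which is exactly the missing ingredient.
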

  \begin{proof}
  For (1), the proof is similar to that of \cite[Lemma 1.1]{S}.\\
  For (2), use the fact that $L^p[\G]$ is dense as continuous embedding on $K{S^p}[\G]$ and follow the proof of  \cite[Lemma 1.2]{S}.
  \end{proof}
  \begin{thm}
  $C_0^\infty[\G] $ is a dense subset of $K{S^2}[\G] $ and $K{S^p}[\G]$.
  \end{thm}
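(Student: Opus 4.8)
The plan is to prove density of $C_0^\infty[\G]$ in both $KS^2[\G]$ and $KS^p[\G]$ by combining two facts already established in the excerpt: first, the preceding lemma shows that mollification converges, namely $f_\epsilon \to f$ in $KS^p[\G]$ whenever $f \in KS^p[\G]$ with $1 \le p < \infty$, and each $f_\epsilon \in C^\infty[\G]$; second, Theorem \ref{th25}(5) states that $C_c[\R_I^\infty]$ is dense in $KS^p[\R_I^\infty]$, and the analogous density on the open set $\G$ follows by the same construction. The strategy is therefore a standard two-step approximation: given an arbitrary $f \in KS^p[\G]$ and a tolerance $\eta > 0$, first approximate $f$ by a compactly supported function $g \in C_c[\G]$ within $\eta/2$ in the $KS^p[\G]$-norm, then mollify $g$ to obtain a $C^\infty$ function within $\eta/2$ of $g$, and finally arrange that the mollified function has compact support inside $\G$ so that it lies in $C_0^\infty[\G]$.

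First I would fix $f \in KS^p[\G]$ and $\eta>0$, and invoke the density of $C_c[\G]$ in $KS^p[\G]$ to choose $g \in C_c[\G]$ with $\|f-g\|_{KS^p[\G]} < \eta/2$. Let $\Delta > 0$ denote the distance from $\mathrm{supp}(g)$ to $\partial\G$, which is positive precisely because $\mathrm{supp}(g)$ is compact and contained in the open set $\G$; this is the setup already described in the excerpt preceding the mollification definition. Next I would mollify: for $\epsilon < \Delta$, the support statement in part (1) of the lemma guarantees $\mathrm{supp}(g_\epsilon) \subset \mathrm{supp}(g) + \{y : |y| \le \epsilon\} \subset\subset \G$, so that $g_\epsilon \in C_0^\infty[\G]$. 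By part (2) of the lemma, $g_\epsilon \to g$ in $KS^p[\G]$, so I may choose $\epsilon$ small enough that simultaneously $\epsilon < \Delta$ and $\|g - g_\epsilon\|_{KS^p[\G]} < \eta/2$. The triangle inequality then yields $\|f - g_\epsilon\|_{KS^p[\G]} < \eta$ with $g_\epsilon \in C_0^\infty[\G]$, establishing density for $1 \le p < \infty$; the case $KS^2[\G]$ is simply the instance $p=2$.

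The main obstacle I anticipate is not the logical structure, which is routine, but ensuring that the two ingredients genuinely transfer from $\R_I^\infty$ (or $\R_I^n$) to the open subset $\G$. Specifically, Theorem \ref{th25}(5) is stated for the full space $KS^p[\R_I^\infty]$, so I would need to justify that $C_c[\G]$ is dense in $KS^p[\G]$; this follows by restricting the defining construction of $KS^p$ to $\G$ and repeating the $\widehat{\R}_I^\infty$-exhaustion argument used throughout the excerpt, but it should be noted explicitly rather than cited verbatim. A second delicate point is that the inequality $\|g_\epsilon\|_{KS^p[\G]} \le \|g\|_{KS^p[\G]}$ in the lemma bounds norms but convergence in $KS^p$ still relies on the continuous dense embedding $L^p[\G] \hookrightarrow KS^p[\G]$, so I would verify that the mollification convergence is genuinely in the $KS^p[\G]$-norm and not merely pointwise a.e.; this is exactly what part (2) of the lemma supplies, so the argument closes cleanly once that lemma is in hand.
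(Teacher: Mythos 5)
Your proof is correct, but it follows a genuinely different route from the paper's. The paper disposes of this theorem in two lines by transitivity of density: $C_0^\infty[\G]$ is dense in $L^p[\G]$ (the classical fact), and $L^p[\G]$ is dense in $K{S^p}[\G]$ (the continuous dense embedding of $L^p$ into $K{S^p}$ established earlier), hence $C_0^\infty[\G]$ is dense in $K{S^2}[\G]$ and $K{S^p}[\G]$. You instead give the constructive cut-off-plus-mollification argument: approximate $f$ by some $g \in C_c[\G]$, then mollify with $\epsilon$ smaller than the distance $\Delta$ from $\mathrm{supp}(g)$ to $\partial\G$, using part (1) of the preceding lemma to keep $\mathrm{supp}(g_\epsilon) \subset\subset \G$ and part (2) to get $g_\epsilon \to g$ in $K{S^p}[\G]$, finishing with the triangle inequality. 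What your approach buys: it actually exercises the mollification lemma the paper proves immediately before this theorem (arguably its intended purpose), it works directly on the open set $\G$ with careful support bookkeeping, and it is self-contained modulo the density of $C_c[\G]$ in $K{S^p}[\G]$, a localization you rightly flag as needing justification since Theorem \ref{th25}(5) is stated only for the full space. What the paper's approach buys: brevity, and it avoids the boundary issues entirely; but note it quietly requires the same kind of localization (density of $L^p[\G]$ in $K{S^p}[\G]$ for an open subset $\G$, whereas the embedding theorems are stated for $\R_I^n$ and $\R_I^\infty$), so neither argument escapes that step — yours at least names it explicitly.
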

  \begin{proof}
  Since $C_0^\infty[\G] $ is dense in $L^2[\G]$ and $L^p[\G]$, it follows that  $C_0^\infty[\G] $ is dense in $K{S^2}[\G] $ and $K{S^p}[\G]$. It follows the result. 
  \end{proof}
  \begin{Def}
  A distribution on $\G$ is a conjugate linear functional on $C_0^\infty[\G]$, that is $C_0^\infty[\G]^{*} $ is the linear space of distributions on $\G.$
  \end{Def}
  \begin{ex}
   The space $L_{loc}^{1}[\G] = \bigcap \{L^1[K]~~: K \subset \subset \G \}$ of locally integrable functions on $\G $ can be identified with a subspace of distributions on $\G.$ That is , $ f \in L_{loc}^{1}[\G]$ is assigned the distribution $T_f \in C_0^\infty[\G]^{*} $ defined by $$T_f(\varphi)= \int_{\G} f\varphi^c,~~\varphi \in C_0^\infty[\G]$$ where the HK integral over the support of $\varphi$ is used.
  \end{ex}
  \begin{rem} If 
  $ T: L_{loc}^{1}[\G] \to C_{0}^{\infty}[\G]^{*} $ is an injection, then the functions in $K{S^2}[\G]$ will be identified as a subspace of $\mcD^{*}[\G].$ 
  \end{rem}
 Let $\alpha= ( \alpha_1, \alpha_2, ..,)$ be multi-index of non negative integers with $|\alpha|= \sum\limits_{k=1}^{\infty} \alpha_k.$ We define the operators $\D_n^\alpha $ and $\D_{\alpha, n}$ by $$\D_n^\alpha= \Pi_{k=1}^{n} \frac{\partial^{\alpha_k}}{\partial x^{\alpha_k}}~\D_{\alpha, n}= \Pi_{k=1}^{n}\left(\frac{1}{2 \pi i}\frac{1}{\partial x_k}\right)^{\alpha_k},$$ respectively. 
 
  \begin{Def}
  \begin{enumerate}
  \item We say that a sequence of functions $\{f_m\} \subset C^\infty[\R_I^\infty]$ converges to a function $ f \in C^\infty[\R_I^\infty]$ if and only if for all multi-indices $\alpha,~~\D^\alpha f \in C[\R_I^\infty]$ and for $ x \in \R_I^\infty $  for all $ n \in \N$, such that  $$\lim\limits_{m \to \infty} \sup [ \sup_{\alpha} \sup_{||x|| \leq \N}|\D^\alpha f(x) - \D^\alpha f_m(x)|]=0.$$

  \end{enumerate}
  \end{Def}
\subsection{\bf{Observation}}:
  We say that a function $ f \in C^\infty[\R_I^\infty]$ if and only if there exists a sequence of functions $\{f_m\} \subset C^\infty[\widehat{\R_I^\infty}]= \bigcup\limits_{n=1}^{\infty}C^\infty[\R_I^n]$ such that for all $ x \in \R_I^\infty$ and $ n \in \N $,   $$\lim\limits_{m \to \infty} \sup [ \sup_{\alpha} \sup_{||x|| \leq \N}|\D^\alpha f(x) - \D^\alpha f_m(x)|]=0.$$
 
  From the above we can say the set of all continuous linear functionals $ T \in \mcD^{*}[\R_I^\infty] $  is called the space of distributions on $\R_I^\infty.$ A family of distributions $\{T_i\} \subset \mcD^{*}[\R_I^\infty] $ is said to converge to $T \in \mcD^{*}[\R_I^\infty]$ if for every $\varphi \in \mcD[\R_I^\infty],$ the numbers $T_i(\varphi) $ converge to $T(\varphi).$\\
   We define derivatives of distributions in such a way that it agrees with the usual notion of derivative in those distributions which arise from continuously differentiable functions. We define $\partial^{\alpha}:\mcD^{*}[\R_I^\infty] \to \mcD^{*}[\R_I^\infty]$ as $\partial ^{\alpha}(T_f)= T_{\D^\alpha f},~~|\alpha| \leq m,~~f \in C^m[\R_I^\infty].$ By integration by parts we obtain  $$T_{\D^\alpha f}(y)= (-1)^{|\alpha|} T_f( \D^\alpha \varphi),~~\varphi \in C_0^\infty[\R_I^\infty]$$ and this identity suggest the following definition:\\
   The $\alpha^{th}$ partial derivative of the distribution $T $ is the distribution $\partial^{\alpha}T $ defined by $$\partial^\alpha T (\varphi) = (-1)^{|\alpha|} T( \D^\alpha \varphi),~~\varphi \in C_0^\infty[\R_I^\infty].$$ Since $\D^\alpha \in L( C_0^\infty[\R_I^\infty], C_0^\infty[\R_I^\infty),$  it follows that $\partial^\alpha T $ is linear. Every distribution has derivatives of all orders and so is every function. For distribution theory one can see \cite{MA,DS,FM} and references therein. 
   \begin{ex} It is clear that the derivatives $\partial^{\alpha} $  and $\D^\alpha $ are compatible with identifications of $C^\infty[\R_I^\infty] $ in $\mcD^{*}[\R_I^\infty].$ For example:
   \begin{enumerate}
   \item If $ f \in C^1[\R_I^\infty]$ then $$\partial f(\varphi)= - f(\D \varphi)=- \int f (\D \varphi^c)= \int (\D f) \varphi^c= \D f(\varphi) $$ where the equality follows by integration by parts. In particular, if $ f(x)= H(x),$ where $H$ is the Heaveside function on $\R_I^\infty, $ $$H(x) =\left \{\begin{array}{c} 1 \mbox{~for~} x_i \geq 0;\\
  0 \mbox{~for~} x_i <0,~i \in \N\end{array}\right.$$ for $x=(x_1,x_2,x_3,\dots) \in \mathbb{R}_{I}^{\infty},$
  then
  \begin{align*}
  \int_{\R_I^\infty} \D H(x) \varphi(x) d\lambda_\infty(x) &= \int_{\R_I^\infty} H(x) \D \varphi(x) d \lambda_\infty(x) \\&= \varphi(0) \\&= \int_{\R_I^\infty} \partial_{\R_I^\infty}(x) \varphi(x) d \lambda_\infty(x).
  \end{align*}
    That is, in the generalized sense of distributions, $\D H(x)= \partial(x)$ the Dirac delta function on $\R_I^\infty$
  \item Let $ f: \R_I^\infty \to K $ be satisfy $ f_{| {\R_I^\infty}^{-} }\in C^\infty(- \infty, 0]$ 
  and $ f_{| {\R_I^\infty}^{+}} \in C^\infty[0, \infty)$ and denote the jump in the various derivatives at $0$ by $$ \sigma_m(f) = \D^m f(0^{+}) - \D^m f(0^{-}),~~m \geq 0.$$ Then we obtain $$\partial f (\varphi)= \D f(\varphi) + \sigma_{0}(f) \partial(\varphi),~~ \varphi \in C_0^\infty[\R_I^\infty].$$ That is $ \partial f = \D f + \sigma_{0}(f) \delta,$ we can compute derivatives of higher order as : \\
  $ \partial^{2} f = \D^2 f + \sigma_1(f) \delta + \sigma_0(f) \partial \delta\\
  \partial^3 f = \D^3 f + \sigma_2 (f) \delta + \sigma_1(f) \partial \delta + \sigma_0(f) \partial^2 \delta$\\
  eg $\partial(H\cdot\sin)= H\cdot\cos\\
  \partial(H\cdot\cos)= -H\cdot\sin + \delta. $ So, $H\cdot\sin $ is a  generalized solution of the ODE $(\delta^2 +1)y= \delta.$
   \end{enumerate}
\end{ex}

     \begin{Def}
   If $\alpha$ is a multi-index and $u,v \in L_{loc}^{1}[\R_I^\infty],$ we say that $v$ is the $\alpha^{th}$ weak (or distributional) partial derivative of $u $ and write $ \D^\alpha u= v $ provided that $$ \int_{\R_I^\infty}u(\D^\alpha \varphi) d \lambda_\infty= (-1)^{|\alpha|} \int_{\R_I^\infty} \varphi v d \lambda_\infty $$ for all functions $ \varphi \in C_c^\infty[\R_I^\infty].$ Thus $v$ is in the dual space $\mcD^{*}[\R_I^\infty] $ of $\mcD[\R_I^\infty].$
   \end{Def}
    If $ u \in L_{loc}^1[\R_I^\infty] $ and $\varphi \in \mcD[\R_I^\infty]$ then we can define $T_u(\cdot) $ by $$T_u(\varphi)= \int_{\R_I^\infty} u \varphi d \lambda_\infty.$$ This is a linear functional on $\mcD[\R_I^\infty]$. If $\{\varphi_n\} \subset \mcD[\R_I^\infty]$ and $\varphi_n \to \varphi $ in $\mcD[\R_I^\infty],$ with the support of $\varphi_n - \varphi$ contained in a compact set $ K \subset \R_I^\infty$,  then we have
    \begin{align*}
    |T_u(\varphi_n) - T_u(\varphi)| &= \left| \int_{\R_I^\infty} u(x)[ \varphi_n(x)-\varphi(x)] d \lambda_\infty(x)\right| \\&\leq \sup_{ x \in K}| \varphi_n(x)- \varphi(x)| \int_{\R_I^\infty}|u(x)|d \lambda_\infty(x).
    \end{align*}
     By uniform convergence on $K,$ we see that $T $ is continuous, so  $T \in \mcD^{*}[\R_I^\infty].$ We assume $$||\varphi||= \sup_{x \in \R_I^\infty}\{| \D^\alpha \varphi(x)|~: \alpha \in \N_0^\infty,~ |\alpha| \leq N \}.$$

     \begin{thm}
      Let $\mcD^{*}[\R_I^\infty] $ be the dual space of $\mcD[\R_I^\infty].$
     \begin{enumerate}
     \item Every differentiable operator $D^\alpha,~~\alpha \in \N_0^\infty $ defines a bounded linear operator on $\mcD[\R_I^\infty].$
     \item If $T \in \mcD^{*}[\R_I^\infty]$ and $\alpha \in \N_0^\infty,$ then $D^\alpha T \in \mcD^{*}[\R_I^\infty]$ and $$ (\D^\alpha T)(\varphi)= (-1)^{|\alpha|} T(\D^\alpha \varphi),~~\varphi \in \mcD[\R_I^\infty].$$
     \item If $|T(\varphi)| \leq c||\varphi||_{N} $ for all $ \varphi \in \mcD[K],$ for some compact set $K \subset \R_I^\infty,$ then $|(\D^\alpha T)(\varphi)| \leq c||\varphi||_{N+|\varphi|} $ and $\D^\alpha \D^\beta T= \D^\beta \D^\alpha T. $
     \item If $ g = \D^\alpha f $ exists as a classical derivative and $ g \in L_{loc}^1[\R_I^\infty],$ then $T_g \in \mcD^{*}[\R_I^\infty]$ and $$(-1)^{|\alpha|} \int_{\R_I^\infty} f(x) (\D^\alpha \varphi) d \lambda_\infty(x)= \int_{\R_I^\infty} g(x) \varphi(x) d \lambda_\infty(x)$$ for all $ \varphi \in \mcD[\R_I^\infty].$
     \item If $ f \in C^\infty[\R_I^\infty]$ and $T \in \mcD^{*}[\R_I^\infty]$ then $ f T \in \mcD^{*}[\R_I^\infty],$ with $ f T(\varphi) = T(f \varphi) $ for all $ \varphi \in \mcD[\R_I^\infty] $ and $\D^\alpha(fT) = \sum\limits_{\beta \leq \alpha} C_{\alpha \beta}(\D^{\alpha - \beta} f) (\D^\beta T) .$
     \end{enumerate}
     \end{thm}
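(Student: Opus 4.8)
The plan is to transplant the classical distribution calculus to $\R_I^\infty$ by exploiting the tame structure: every $\varphi \in \mcD[\R_I^\infty]$ is a compact sequential limit of functions living on some $\R_I^n$, and since each multi-index $\alpha \in \N_0^\infty$ has only finitely many nonzero entries, $\D^\alpha$ acts through finitely many genuine partial derivatives. For part (1), linearity of $\D^\alpha$ is immediate, and boundedness follows from the seminorm estimate $\|\D^\alpha\varphi\|_N \le \|\varphi\|_{N+|\alpha|}$, since differentiating raises by exactly $|\alpha|$ the order of the derivatives appearing in the supremum defining $\|\cdot\|_N$. Continuity in the compact sequential limit topology is then a direct consequence: if $\varphi_m \to \varphi$ with all supports in a fixed compact $K$, the same $K$ contains the supports of $\D^\alpha\varphi_m - \D^\alpha\varphi$, and uniform convergence of all derivatives is preserved under one further fixed-order differentiation. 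For part (2), I would set $(\D^\alpha T)(\varphi) := (-1)^{|\alpha|}T(\D^\alpha\varphi)$; this is linear because $T$ and $\D^\alpha$ are, and continuous as the composition of the continuous map $\D^\alpha$ (part (1)) with the continuous functional $T$, so $\D^\alpha T \in \mcD^{*}[\R_I^\infty]$.

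Part (3) is bookkeeping built on the same seminorm control. Assuming $|T(\varphi)| \le c\|\varphi\|_N$ on $\mcD[K]$, I would write $|(\D^\alpha T)(\varphi)| = |T(\D^\alpha\varphi)| \le c\|\D^\alpha\varphi\|_N \le c\|\varphi\|_{N+|\alpha|}$, using the estimate established in part (1) (so the exponent in the bound is $N+|\alpha|$). The commutativity $\D^\alpha\D^\beta T = \D^\beta\D^\alpha T$ reduces to Schwarz's theorem applied to the test functions: since each $\varphi \in \mcD[\R_I^\infty]$ is smooth, $\D^\alpha\D^\beta\varphi = \D^\beta\D^\alpha\varphi$ pointwise, whence $(\D^\alpha\D^\beta T)(\varphi) = (-1)^{|\alpha|+|\beta|}T(\D^\alpha\D^\beta\varphi) = (-1)^{|\alpha|+|\beta|}T(\D^\beta\D^\alpha\varphi) = (\D^\beta\D^\alpha T)(\varphi)$.

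The substantive step is part (4), the integration-by-parts identity, and this is where I expect the only real obstacle. The strategy is to reduce the $\lambda_\infty$-pairings to ordinary integrals on $\R^n$: fix $\varphi \in \mcD[\R_I^\infty]$ with support in a compact $K$, and take tame representatives of $\varphi$ and $f$ of some order $n$ large enough that all nonzero entries of $\alpha$ lie among the first $n$ coordinates. By the measure definition $\lambda_\infty(\mfA_n) = \lambda_n(\mfA)$ (the factor $I_n$ contributing unit mass through $h_n = \chi_{I_n}$), each integral $\int_{\R_I^\infty}(\cdot)\,d\lambda_\infty$ collapses to $\int_{\R^n}(\cdot)\,d\lambda_n$. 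On $\R^n$ the identity $(-1)^{|\alpha|}\int f\,\D^\alpha\varphi\,d\lambda_n = \int (\D^\alpha f)\,\varphi\,d\lambda_n$ is ordinary iterated integration by parts, with every boundary term vanishing because $\varphi$ has compact support. Passing to the limit $n \to \infty$ through the $L^1$-convergence built into the definition of the $\lambda_\infty$-integral (cf.\ Definition \ref{def114}) then yields the identity on $\R_I^\infty$, and the continuity estimate from part (3) guarantees $T_g \in \mcD^{*}[\R_I^\infty]$. The care needed here is precisely in justifying that the tame approximation and the limit interchange legitimately, which is the heart of the argument.

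Finally, part (5) is the Leibniz rule. For $f \in C^\infty[\R_I^\infty]$ and $\varphi \in \mcD[\R_I^\infty]$, the product $f\varphi$ is again smooth with support contained in that of $\varphi$, so $f\varphi \in \mcD[\R_I^\infty]$ and $(fT)(\varphi) := T(f\varphi)$ is a continuous linear functional, giving $fT \in \mcD^{*}[\R_I^\infty]$. The formula $\D^\alpha(fT) = \sum_{\beta \le \alpha} C_{\alpha\beta}(\D^{\alpha-\beta}f)(\D^\beta T)$ I would obtain by applying the definition of the distributional derivative, expanding $\D^\alpha(f\varphi)$ via the classical multivariate Leibniz rule on the test function, and reindexing; the coefficients $C_{\alpha\beta}$ are exactly the classical binomial coefficients, and the finite support of $\alpha$ keeps every sum finite. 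No analytic input beyond parts (1)--(2) is required here.
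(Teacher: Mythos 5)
Your proposal is correct and takes essentially the same approach as the paper: the paper's entire proof of this theorem is the single sentence ``Proofs are similar as for $\R^n$,'' and your argument is precisely that classical transplantation, carried out with the details (the seminorm estimate $\|\D^\alpha\varphi\|_N \le \|\varphi\|_{N+|\alpha|}$, duality for part (2), Schwarz symmetry for part (3), tame reduction and integration by parts for part (4), and the Leibniz expansion for part (5)) that the paper leaves implicit. If anything, your write-up is more honest than the source, since you flag the one point the paper silently skips --- justifying the interchange of the tame approximation with the limit $n\to\infty$ in part (4).
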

     \begin{proof}
      The proofs are similar to those of $\R^n$.
      \end{proof}
       The weak and strong derivative for $L^p[\R_I^n]$ can be defined like the weak and strong derivative for $L^p[\R^n]$.  For  theory of the weak derivative and strong derivative for $L^p[\R^n]$ we follow the  definition 29.15 of  \cite{Driver}.
       \begin{thm}
      Strong differentiable implies weak differentiable in $L^p[\R_I^n].$
      \end{thm}
      \begin{proof}
       The proof is similar as $(4) \implies (2) $ of the theorem 29.18 of \cite{Driver} those of $L^p[\R^n].$ 
      \end{proof}
      We  state the weak and strong derivative for $L^p[\R_I^\infty]$ as:
      \begin{Def}
       Let $v \in \R_I^\infty$ and $f \in L^p[\R_I^\infty]~(f \in L_{loc}^{1}[\R_I^\infty])$, then $ \partial_{v}^wf $ is said to exists weakly in $L^p[\R_I^\infty]( L_{loc}^{1}[\R_I^\infty])$ if there exists a function $ g \in L^p[\R_I^\infty]( g \in L_{loc}^{1}[\R_I^\infty])$ such that $$< f, \partial_v \varphi>= -< g, \varphi>,~\forall \varphi \in C_c^\infty[\R_I^\infty].$$
  In this case $\partial_v^w f =g$.
      \end{Def}
      
      \begin{Def}
      \begin{enumerate}
      \item For $ v \in \R_I^\infty$, $h \in \R-\{0\}$ and a function $f: \R_I^\infty \to C,$ let $$\partial_{v^h} f(x) = \frac{f(x+hv)-f(x)}{h}$$ for those $ x \in \R_I^\infty$ such that $ x+ h v \in \R_I^\infty$. When $v $ is one of the standard basis elements $e_i$, for $ 1 \leq i \leq d,$ we will write $\partial_{i}^{h} f(x) $ rather than $ \partial_{{e_i^h}} f(x)$.
      \item  Let $ v \in \R_I^\infty $ and $ f \in L^p[\R_I^\infty],$ then it is said that $\partial_v^s f $  exists strongly in $L^p[\R_I^\infty]$, if $ \lim\limits_{h \to 0} \partial_{v}^{h} f $ exists in $L^p[\R_I^\infty]$. In this case $\partial_v^s f=\lim\limits_{h \to 0} \partial_{v^{h}} f$.
      \end{enumerate}
     \end{Def}
      
      Now we give the strong and the weakly differentiability for functions in $K{S^p}[\R_I^\infty]$.  However, to understand this, we need little work on $K{S^p}[\R_I^n]$. 
      \begin{Def}
      Let $v \in \R^n$ and $f \in K{S^p}[\R_I^n]$.  Then it is said that $\partial_v^w f $   exists weakly in $K{S^p}[\R_I^n]$,  if there exists a function $ g \in K{S^p}[\R_I^n]$ such that $$< f, \partial_v \varphi> = -< g, \varphi>_{K{S^p}},~~\forall \varphi \in C_c^\infty[\R_I^n].$$ 
We define $\partial_v f =g$. If $ \alpha \in \N_0^\infty$, then $ \partial^\alpha f$  exists weakly in $K{S^p}[\R_I^n]$ if and only if there exists $ g \in K{S^p}[\R_I^n]$  such that $$<f, \partial^\alpha \varphi> = (-1)^\alpha < g, \varphi>_{KS^p},~\forall \varphi \in C_c^\infty[\R_I^n].$$ In this case $ \partial^\alpha f=g$.
      \end{Def}
Since $K{S^p}[\R_I^n]$ is the completion of $L^p[\R_I^n]$, we define the strongly differentiability in $K{S^p}[\R_I^n]$  as was done in $L^p[\R_I^n].$
      \begin{Def}
      Let $ v \in \R^n$ and $f \in K{S^p}[\R_I^n]$. Then $\partial_v^s f $ exists strongly in $K{S^p}[\R_I^n]$, if $\lim\limits_{h \to 0} \partial_{v^{h}} f $ exists in $K{S^p}[\R_I^n]$. We define $\partial_v^s f =\lim\limits_{h \to 0} \partial_{v^{h}} f$.
      \end{Def}
   Strongly differentiable  implies weakly differentiability in $L^p[\R_I^n]$ and weakly convergent  in $L^p[\R_I^n]$ is strongly convergence in $K{S^p}[\R_I^n]$ with compact support. This lead our investigation is more interesting, we want to find a relation between strongly differentiable and weakly differentiable in $K{S^p}[\R_I^n].$
      Easily we can find for $f \in K{S^p}[\R_I^n]$ and $ v \in \R^d,$ if $\partial_{v}^{s} f\in K{S^p}[\R_I^n]$ exists weakly then $\partial_{v} f \in K{S^p}[\R_I^n]$  and they are equal. Now 
      \begin{align*}
      <\partial_{v}^{h} f, \varphi> &= \sum_{r=1}^{\infty} \tau_r \int_{\R^d}\zeta_r(x)\left\{\frac{f(x+hv)-f(x)}{h}\right\} \varphi(x) d\lambda_n(x)\\&= \sum_{k=1}^{\infty} \tau_r \int_{\R^d} \zeta_r(x)\{f(x)\} \frac{\varphi(x-hv)-\varphi(x)}{h}d \lambda_n(x) \\&= \sum_{r=1}^{\infty} \tau_r \int_{\R^d} \zeta_r(x) f(x) \frac{\varphi(x-hv)-\varphi(x)}{h}d \lambda_n(x)\\&=< f, \partial_{-v}^{h} \varphi>_{KS^p}.
      \end{align*}
      If $\partial_{v}^{s}f= \lim\limits_{h \to 0} \partial_{v}^{h} f $ exists in $K{S^p}[\R_I^n]$ and $\varphi \in C_c^\infty[\R^d]$ then 
      \begin{align*}
      < \partial_{v}^{s} f, \varphi> &= \lim\limits_{h \to 0} < \partial_{v}^{h} f , \varphi> \\&= \lim\limits_{h \to 0}< f, \partial_{-v}^{h} \varphi> \\&= - <f, \partial_v \varphi>.
      \end{align*}
      Our main purpose is now to find whether weakly differentiable in $L^p[\R_I^n]$ implies strongly differentiable in $K{S^p}[\R_I^n]$. This comes from the fact that weakly differentiable in $L^p[\R_I^n] $ implies weakly convergence in $L^p[\R_I^n]$, moreover weakly convergence in $L^p[\R_I^n]$ is strongly convergence in $K{S^p}[\R_I^n]$. The fact that $f $ is strongly convergence in $K{S^p}[\R_I^n]$ gives $$ \int_{\R_I^n} \zeta_r(x)[ f_n(x)-f(x)] d\lambda_\infty(x) \to 0 \mbox{~~for~each~} n.$$ This gives us $\lim\limits_{ h \to 0} \partial_{v}^{h} f =0 $ in $K{S^p}[\R_I^n]$.
      \begin{rem}
       Any function in $L^\infty[\R_I^n]$ is weakly derivable in $ L^p[\R_I^n]$ so $$\zeta_r(f)  \in L^p[\R_I^n] \cap L^\infty[\R_I^n]$$ is also in the sense of weak if we consider in weak derivative. 
      \end{rem}
      From this remark we can conclude that weakly differentiable in $L^p[\R_I^n] \cap L^\infty[\R_I^n] $ is also strongly differentiable in $K{S^p}[\R_I^n].$
      \begin{prop}
      If $ \partial_{v} f $ exists in $L_{loc}^{p}[\R_I^n]$ weakly then there exists $ f_n \in C_c^\infty[\R_I^n]$ such that $f_n \to f $ in $K{S^p}[K]$ strongly, i.e $$ \lim\limits_{n \to \infty}||f-f_n||_{K{S^p}[K]}=0$$ and $\partial_v  f_n \to \partial_v f $ in $K{S^p}[K]$ strongly for all $K \subset \subset \R_I^n$.
      \end{prop}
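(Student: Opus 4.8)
The plan is to produce the approximants by mollification followed by a smooth cutoff, thereby reducing everything to the two convergence statements of the preceding mollifier Lemma. First I would fix $K \subset \subset \R_I^n$ and choose an open set $\G$ with $K \subset \subset \G \subset \subset \R_I^n$ and $\delta := \mathrm{dist}(K, \partial \G) > 0$. Since the weak derivative $\partial_v f$ is assumed to exist in $L_{loc}^{p}[\R_I^n]$, both $f$ and $\partial_v f$ restrict to elements of $L^p[\G]$, and hence belong to $KS^p[\G]$ by the continuous embedding $L^p[\R_I^n] \hookrightarrow KS^p[\R_I^n]$. For $0 < \epsilon < \delta$ I set
$$f_\epsilon(x) = \int_{\R_I^\infty} f(z)\, \varphi_\epsilon(x-z)\, d\lambda_\infty(z),$$
which by the mollifier Lemma lies in $C^\infty$ and satisfies $f_\epsilon \to f$ in $KS^p[\G]$.

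The decisive step is the commutation identity $\partial_v f_\epsilon = (\partial_v f)_\epsilon$ on $K$. Fix $x \in K$; for $\epsilon < \delta$ the function $z \mapsto \varphi_\epsilon(x-z)$ lies in $C_c^\infty[\R_I^n]$ with support contained in $\G$, so it is an admissible test function for the weak derivative of $f$ on $\G$. Differentiating under the integral sign and using $\partial_{v,x}\varphi_\epsilon(x-z) = -\partial_{v,z}\varphi_\epsilon(x-z)$ gives
\begin{align*}
\partial_v f_\epsilon(x) &= \int_{\R_I^\infty} f(z)\, \partial_{v,x}\varphi_\epsilon(x-z)\, d\lambda_\infty(z) \\
&= -\int_{\R_I^\infty} f(z)\, \partial_{v,z}\big[\varphi_\epsilon(x-z)\big]\, d\lambda_\infty(z) \\
&= \int_{\R_I^\infty} (\partial_v f)(z)\, \varphi_\epsilon(x-z)\, d\lambda_\infty(z) = (\partial_v f)_\epsilon(x),
\end{align*}
where the third equality is precisely the defining relation of the weak derivative applied to the test function $z \mapsto \varphi_\epsilon(x-z)$. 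This identity is the step I expect to be the main obstacle, since it requires justifying differentiation under the $\lambda_\infty$-integral and checking that the $L_{loc}^{p}$ weak-derivative relation is genuinely available for translated mollifiers in the $\R_I^n$ setting; here the translation invariance of $\lambda_\infty$ is what legitimizes the underlying change of variable.

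Granting the identity, I would apply the mollifier Lemma a second time, now to $\partial_v f \in KS^p[\G]$, to obtain $(\partial_v f)_\epsilon \to \partial_v f$ in $KS^p[\G]$, and therefore $\partial_v f_\epsilon \to \partial_v f$ in $KS^p[K]$ after restriction to $K$. To enforce compact support I choose $\eta \in C_c^\infty[\R_I^n]$ with $\eta \equiv 1$ on an open neighborhood $U$ of $K$ and $\mathrm{supp}(\eta) \subset \G$, and set $f_m := \eta\, f_{\epsilon_m}$ for any sequence $\epsilon_m \downarrow 0$. Then each $f_m \in C_c^\infty[\R_I^n]$, and because $\eta \equiv 1$ and $\partial_v \eta = 0$ on $U \supseteq K$, the product rule gives $f_m = f_{\epsilon_m}$ and $\partial_v f_m = \partial_v f_{\epsilon_m}$ throughout $K$. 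Restricting the two convergences established above to $K$ then yields $\lim_{m \to \infty} ||f - f_m||_{KS^p[K]} = 0$ and $\lim_{m \to \infty} ||\partial_v f - \partial_v f_m||_{KS^p[K]} = 0$, which is the assertion. If one prefers, the $KS^p$ convergence of the mollifications can instead be read off from the classical $L^p[K]$ convergence of $f_\epsilon$ and $(\partial_v f)_\epsilon$ together with the continuous embedding $L^p[K] \hookrightarrow KS^p[K]$, so that only the commutation identity is genuinely new.
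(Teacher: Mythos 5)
Your argument is correct in substance, but be aware that the paper offers no proof to compare against: its ``proof'' of this proposition is literally ``Proof is easy so we left to the readers.'' Your mollification-plus-cutoff argument is the standard one and supplies exactly the missing content, and you correctly identify the crux: the commutation identity $\partial_v f_\epsilon = (\partial_v f)_\epsilon$ on $K$, which converts the weak-derivative hypothesis into information about honest derivatives of smooth functions, after which both convergences follow from two applications of the paper's mollifier Lemma (or, as you note, from classical $L^p$ convergence of mollifications plus the continuous embedding $L^p \hookrightarrow K{S^p}$). Two points deserve attention. First, a quantifier issue: as stated, the proposition asks for a single sequence $(f_n) \subset C_c^\infty[\R_I^n]$ whose convergence holds \emph{for all} $K \subset\subset \R_I^n$, whereas your construction chooses $\G$, $\delta$, and the cutoff $\eta$ \emph{after} fixing $K$, so your sequence depends on $K$. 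This is repaired by the routine exhaustion trick: take compacts $K_1 \subset\subset K_2 \subset\subset \cdots$ with $\bigcup_m K_m = \R_I^n$, cutoffs $\eta_m \equiv 1$ on $K_m$ with $\mathrm{supp}(\eta_m) \subset\subset K_{m+1}$, and $\epsilon_m$ so small that $\|f - \eta_m f_{\epsilon_m}\|_{K{S^p}[K_m]} + \|\partial_v f - \partial_v(\eta_m f_{\epsilon_m})\|_{K{S^p}[K_m]} < 1/m$; any fixed compact $K$ lies in $K_m$ for large $m$, so one sequence serves all $K$. Second, a smaller technical point: the paper's mollifier Lemma is stated for functions with compact support in $\G$ (extended by zero), so to invoke it verbatim you should mollify $\eta f$ rather than $f$ itself, or observe that for $x \in K$ and $\epsilon < \delta$ the convolution only sees values of $f$ on $\G$, where $f \in L^p[\G] \subset K{S^p}[\G]$. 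Neither repair changes the architecture of your proof.
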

      \begin{proof}
      Proof is easy so we left to the readers.
      \end{proof}
       We can construct above concept for $K{S^p}[\R_I^\infty],$ which we mention now:
        \begin{Def}
      Let $v \in \R^d$ and $f \in K{S^p}[\R_I^\infty]$  then $\partial_v f $ is said to exist weakly in $K{S^p}[\R_I^\infty]$  if there exists a function $ g \in K{S^p}[\R_I^\infty]$ such that $$< f, \partial_v \varphi> = -< g, \varphi>_{K{S^p}},~\forall \varphi \in C_c^\infty[\R_I^\infty].$$ If $ \alpha \in \N_0^\infty$ then $ \partial^\alpha f$ is exists weakly in $K{S^p}[\R_I^\infty]$ if and only if there exists $ g \in K{S^p}[\R_I^\infty]$  such that $$< f, \partial^\alpha \varphi> = (-1)^{\alpha} < g, \varphi>_{KS^p},~\forall \varphi \in C_c^\infty[\R_I^\infty].$$
      \end{Def}
      Since $K{S^p}[\R_I^\infty]$ is completion of $L^p[\R_I^\infty]$, we define strongly differentiable as like of $L^p[\R_I^\infty].$
      \begin{Def}
      Let $ v \in \R^d$ and $f \in K{S^p}[\R_I^\infty]$, then $\partial_v f $ is exists strongly in $K{S^p}[\R_I^\infty]$ if $\lim\limits_{h \to 0} \partial_{v}^{h} f $ exists in $K{S^p}[\R_I^\infty]$.
      \end{Def}
      Same way the strongly differentiable implies weakly differentiable in $L^p[\R_I^\infty]$ and weakly convergent in $L^p[\R_I^\infty]$ is strongly convergence in $K{S^p}[\R_I^\infty]$ with compact support. 
      Easily we can find for $f \in K{S^p}[\R_I^\infty]$ and $ v \in \R^d$, if $\partial_{v}^{s} f \in K{S^p}[\R_I^\infty]$ exists then $\partial_{v}f \in K{S^p}[\R_I^\infty]$  weakly and they are equal. Moreover weakly differentiable in $L^p[\R_I^\infty]$ implies strongly differentiable in $K{S^p}[\R_I^\infty]$. This comes from the fact that weakly differentiable in $L^p[\R_I^\infty] $ implies weakly convergence in $L^p[\R_I^\infty]$, with weakly convergence in $L^p[\R_I^\infty]$ is strongly convergence in $K{S^p}[\R_I^\infty]$.
       Any function in $L^\infty[\R_I^\infty]$ is weakly derivable in $ L^p[\R_I^\infty]$ so $$\zeta_r(f)  \in L^p[\R_I^\infty] \cap L^\infty[\R_I^\infty]$$ is also in the sense of weak if we consider in weak derivative. 
      From this we can conclude that weakly differentiable in $L^p[\R_I^\infty] \cap L^\infty[\R_I^\infty] $ is also strongly differentiable in $K{S^p}[\R_I^\infty].$
      \begin{lem}
      \begin{enumerate}
      \item Suppose $ f \in L_{loc}^{1}[\R_I^\infty]$ and $\partial_{v} f $ exists weakly in $L_{loc}^{1}[\R_I^\infty].$ Then $supp_{m}( \partial_v f) \subset supp_{m}(f),$ where $supp_{m}(f) $ is essential support of $ f $ relative to Lebesgue measure.
      \item If $f $ is continuously differentiable on $ u \subset \R_I^\infty$ then $\partial_{v} f = \partial _{u}(f) $ (weakly) a.e. on $f$
      \end{enumerate}
      \end{lem}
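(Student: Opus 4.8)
The plan is to handle the two assertions separately, reducing each to its finite-dimensional counterpart through the $e_n$-tame structure of $\R_I^\infty$ and the fact (Theorem~1.1) that $\lambda_\infty$ restricted to $\R_I^n$ coincides with $n$-dimensional Lebesgue measure.

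For (1) I would argue by localization on the open set where $f$ vanishes. Write $U = \R_I^\infty \setminus supp_{m}(f)$, which is the largest open set on which $f=0$ a.e. The key observation is that $\mathrm{supp}(\partial_v\varphi)\subseteq\mathrm{supp}(\varphi)$ for every test function, since differentiation never enlarges the support. Hence for any $\varphi\in C_c^\infty[\R_I^\infty]$ with $\mathrm{supp}(\varphi)\subset U$, the definition of the weak derivative gives
\[
\langle \partial_v f,\varphi\rangle = -\langle f,\partial_v\varphi\rangle = -\int_{\R_I^\infty} f\,(\partial_v\varphi)\,d\lambda_\infty = 0,
\]
because $f=0$ a.e. on $U$ and $\partial_v\varphi$ is supported in $U$. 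As this holds for all such $\varphi$, the fundamental lemma of the calculus of variations (du Bois-Reymond) forces $\partial_v f = 0$ a.e. on $U$, so $supp_{m}(\partial_v f)$ cannot meet $U$; that is, $supp_{m}(\partial_v f)\subseteq supp_{m}(f)$.

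For (2) I would show that the classical directional derivative of a $C^1$ function is a weak derivative, whence the two coincide a.e. by the uniqueness of weak derivatives. Fix $\varphi\in C_c^\infty[\R_I^\infty]$; its support lies in a compact $K\subset\R_I^n$ for some $n$, so both factors may be replaced by their $e_n$-tame representatives and the pairing becomes an integral over $\R_I^n$. There $\lambda_\infty$ is ordinary Lebesgue measure, and classical integration by parts in the direction $v$ applies, the boundary terms vanishing because $\varphi$ is compactly supported:
\[
\int_{\R_I^n} f\,(\partial_v\varphi)\,d\lambda_\infty = -\int_{\R_I^n} (\partial_v f)\,\varphi\,d\lambda_\infty,
\]
with $\partial_v f$ on the right the classical derivative. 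Passing to the limit $n\to\infty$ through the defining Cauchy sequence for the integral on $\R_I^\infty$ yields $\langle f,\partial_v\varphi\rangle = -\langle \partial_v f,\varphi\rangle$ for every $\varphi$, exhibiting the classical derivative as a weak derivative; uniqueness then gives equality a.e.

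The hard part will be the limiting step in (2): one must check that the $e_n$-tame approximations of $f$ and of its classical derivative converge in $L_{loc}^1$ simultaneously, so that the integration-by-parts identity is preserved under $n\to\infty$, and that the compact support of $\varphi$ really does confine the computation to a single $\R_I^n$ at each stage. Part (1) is comparatively routine once the support-shrinking property of $\partial_v$ is noted and du Bois-Reymond is invoked.
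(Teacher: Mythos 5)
First, a point of comparison: the paper states this lemma with no proof whatsoever (it sits between other unproved statements), so there is no argument of the authors' to measure yours against; your proposal supplies what the paper omits. Judged on its own merits, your part (1) is correct and complete at the level of rigor this paper works at. The two ingredients --- that $\mathrm{supp}(\partial_v\varphi)\subseteq\mathrm{supp}(\varphi)$ for test functions, and the du Bois-Reymond lemma applied on the open set $U=\R_I^\infty\setminus supp_{m}(f)$ --- are exactly what is needed, and the du Bois-Reymond step is legitimately available in this setting: the paper constructs mollifiers on $\R_I^\infty$ and proves density of $C_0^\infty[\R_I^\infty]$ in $L^1[\R_I^\infty]$, and the metric topology on $\R_I^\infty$ is separable, so the ``largest open set on which $f=0$ a.e.'' is well defined.

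Part (2) follows the right strategy (exhibit the classical derivative as a weak derivative by integration by parts, then invoke uniqueness of weak derivatives), but there is a genuine gap beyond the limiting step you flagged. In this lemma the relevant definition of weak differentiability in $L_{loc}^{1}[\R_I^\infty]$ takes $v\in\R_I^\infty$, not $v\in\R^d$; and an $e_n$-tame test function has the form $\varphi^n(\ov{x})\otimes\chi_{I_n}(\widehat{x})$, whose tail factor $\chi_{I_n}$ is a characteristic function: it is not differentiable in the tail directions and does not vanish on the boundary of the box $I_n$. Hence your integration by parts over $\R_I^n=\R^n\times I_n$ ``with vanishing boundary terms'' is valid only when all nonzero components of $v$ lie among the first $n$ coordinates; if $v$ has tail components, boundary terms on $\partial I_n$ survive, and $\partial_v\varphi$ is not even classically meaningful at the tame stage. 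The repair is to restrict to directions $v$ with finitely many nonzero components (consistent with the paper's own $K{S^p}$ definitions, where $v\in\R^d$) and to choose $n$ large enough to absorb them; with that restriction, the limiting step you worried about does go through, since membership of $f$ and $\partial_v f$ in $L_{loc}^{1}[\R_I^\infty]$ is itself defined through $\lambda_\infty$-a.e.\ convergence of finite-dimensional approximants, which preserves the integration-by-parts identity against each fixed $\varphi$. As written, however, part (2) is not yet a complete proof.
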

      \begin{prop}
      If $ \partial_{v} f $ exists in $L_{loc}^{p}[\R_I^\infty]$ weakly then there exists $ f_n \in C_c^\infty[\R_I^\infty]$ such that $f_n \to f $ in $K{S^p}[K]$ strongly, i.e $$ \lim\limits_{n \to \infty}||f-f_n||_{K{S^p}[K]}=0$$ and $\partial_v  f_n \to \partial_v f $ in $K{S^p}[K]$ strongly for all $K \subset \subset \R_I^\infty.$
      \end{prop}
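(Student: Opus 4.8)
The plan is to build the approximating sequence $\{f_n\}$ by mollification, exactly as in the finite--dimensional Proposition on $\R_I^n$ that precedes this one, and then to transfer the convergence into the $K{S^p}$--norm using the density of $L^p$ in $K{S^p}$ together with the mollifier Lemma established above. Fix $K \subset\subset \R_I^\infty$. First I would localise: choose an open set $\G$ with $K \subset\subset \G \subset\subset \R_I^\infty$ and a cut-off $\eta \in C_c^\infty[\R_I^\infty]$ with $\eta \equiv 1$ on a neighbourhood $U$ of $K$ and $\mbox{supp}\,\eta \subset \G$. Since $f \in L_{loc}^{p}[\R_I^\infty]$ and $\eta$ has compact support, $\eta f \in L^p[\R_I^\infty]$; and because $\partial_v f$ exists weakly, the Leibniz rule gives $\partial_v(\eta f) = \eta\,\partial_v f + (\partial_v \eta) f \in L^p[\R_I^\infty]$. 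Thus we have reduced to a compactly supported function whose weak $\partial_v$-derivative lies in $L^p[\R_I^\infty]$, and on $U$ (hence on $K$) we have $\eta f = f$ and $\partial_v(\eta f) = \partial_v f$.

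Next I would set $f_n = (\eta f)_{\epsilon_n}$ for a sequence $\epsilon_n \downarrow 0$, where $(\,\cdot\,)_{\epsilon}$ denotes convolution with the mollifier $\varphi_\epsilon$ of the preceding Lemma. By part (1) of that Lemma, $f_n \in C^\infty[\R_I^\infty]$ and $\mbox{supp}\,f_n \subset \mbox{supp}(\eta f) + \{y : |y| \le \epsilon_n\} \subset \G$ once $\epsilon_n$ is below the distance from $\mbox{supp}\,\eta$ to $\partial\G$; hence $f_n \in C_c^\infty[\R_I^\infty]$. Since $\eta f \in L^p[\R_I^\infty] \subset K{S^p}[\R_I^\infty]$ as a dense continuous embedding, part (2) of the Lemma yields $\|f_n - \eta f\|_{K{S^p}[\G]} \to 0$, and restricting to $K$ (where $\eta f = f$) gives the first claim $\|f_n - f\|_{K{S^p}[K]} \to 0$.

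For the derivative convergence the key point is that the weak derivative commutes with mollification: $\partial_v f_n = \big(\partial_v(\eta f)\big)_{\epsilon_n}$. I would prove this by moving $\partial_v$ onto $\varphi_{\epsilon_n}$ inside the convolution integral, using the translational invariance of $\lambda_\infty$ (the Corollary) to rewrite the difference quotient, and then invoking the definition of the weak derivative of $\eta f$ tested against $\varphi_{\epsilon_n}(x - \cdot\,)$; Fubini for $\lambda_\infty$ and differentiation under the integral sign justify the interchange. Granting this identity, $\partial_v(\eta f) \in L^p[\R_I^\infty] \subset K{S^p}[\R_I^\infty]$, so part (2) of the Lemma again gives $\|\partial_v f_n - \partial_v(\eta f)\|_{K{S^p}[\G]} \to 0$; on $K$ we have $\partial_v(\eta f) = \partial_v f$, and therefore $\partial_v f_n \to \partial_v f$ in $K{S^p}[K]$, which completes the proof.

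The step I expect to be the main obstacle is the commutation identity $\partial_v(g_{\epsilon}) = (\partial_v g)_{\epsilon}$ in the genuinely infinite--dimensional space $\R_I^\infty$: one must verify that Fubini's theorem and differentiation under the integral sign are legitimate for $\lambda_\infty$, and that the translation $x \mapsto x - y$ used in the convolution preserves measurability and the $e_n$-tame structure so that the mollified functions remain in the spaces under consideration. An alternative that sidesteps these analytic subtleties is to exploit $\R_I^\infty = \bigcup_n \R_I^n$ together with the equalities $L^p[\widehat{\R}_I^\infty] = L^p[\R_I^\infty]$ and $K{S^p}[\widehat{\R}_I^\infty] = K{S^p}[\R_I^\infty]$: approximate $f$ and $\partial_v f$ by $e_n$-tame functions, apply the already--established $\R_I^n$ Proposition at each finite level to produce $C_c^\infty[\R_I^n]$ approximants, and then pass to the limit in $n$, using that the finite--level approximations are Cauchy in the $K{S^p}[K]$--norm.
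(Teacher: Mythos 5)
The paper offers no proof here to compare against: this Proposition closes Section 2 with no argument at all, and its finite-dimensional sibling on $\R_I^n$ is dismissed with ``Proof is easy so we left to the readers.'' Your cutoff-and-mollify argument therefore supplies exactly what the paper omits, and it is the standard one: the localisation $\eta f$, the Leibniz rule for weak derivatives (legitimate, since $\eta\varphi$ is again an admissible test function), part (1) of the mollifier Lemma for smoothness and compact support, part (2) for convergence in $K{S^p}[\G]$, and the reduction of the derivative statement to the commutation identity $\partial_v\bigl((\eta f)_{\epsilon}\bigr)=\bigl(\partial_v(\eta f)\bigr)_{\epsilon}$ form the correct skeleton.

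Two points deserve flagging. First, the obstacle you identify is real and is in fact worse than you suggest: the paper never establishes a Fubini theorem for $\lambda_\infty$, proves translation invariance only on $(\R_I^n,\mcB[\R_I^n])$ for finite $n$, and the mollifier itself --- a $C_0^\infty$ function supported in the genuinely infinite-dimensional ball $\{x:|x|\le\epsilon\}$ with $\int\varphi_\epsilon\,d\lambda_\infty=1$ --- is not an $e_n$-tame object, so even its defining normalisation is not covered by the paper's integration theory. Within this framework, your second, finite-level route (approximate by $e_n$-tame functions, apply the $\R_I^n$ Proposition at each level, then diagonalise) is the safer one. Second, a gap you do not flag: as stated, the Proposition asks for a single sequence $(f_n)$ that works for every $K\subset\subset\R_I^\infty$, whereas your sequence depends on $K$ through the cutoff $\eta$. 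In $\R^n$ one upgrades from ``for each $K$'' to ``for all $K$ simultaneously'' by exhausting the space with balls and diagonalising, but $\R_I^\infty$ admits no countable cofinal family of compact sets (every compact set lies in a product of compact intervals, and a diagonal choice of larger intervals escapes any prescribed countable family), so this upgrade is not automatic. You should either construct such an exhaustion argument adapted to the paper's setting or explicitly weaken the conclusion to ``for each $K$ there exists a sequence,'' which is what your proof actually delivers.
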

    \section{ HK-Sobolev spaces}
    The function $f(x)=|x|$  is weak derivable in $KS^p(\R_I^{n})$ which is not strongly derivable in   $KS^p(\R_I^{n})$. This type of functions motivate us to think in a space like Sobolev for  $KS^p(\R_I^{n})$ and $KS^p(\R_I^{\infty}).$\\
     In the one dimensional case the HK-Sobolev space $W{S^{k, p}}[\R]$ for $ 1 \leq p \leq \infty$ is defined as the subset of functions $ f $ in $K{S^p}[\R]$ such that $ f $ and its weak derivatives upto order $k$ have a finite $K{S^p}$ norm.\\
     In one dimensional problem it is enough to assume that $f^{(k-1)},$ the $(k-1)th$ derivative of the function $f $ is differentiable almost every where. That is $$W{S^{k,p}}[\R] = \{ f(x):~\D^k f(x) \in K{S^p}[\R]\}.$$ For multi-dimensional case the transition to multiple dimensions entails more difficulties, starting with the definition itself. The requirement that $f^{(k-1)}$ be the integral of $f^{(k) }$ does not generalize, and the simplest solution is to consider derivatives in the sense of distribution theory.\\
     A formal definition  we now state as: 
      Let $ k \in \N,~~1 \leq p \leq \infty.$ The HK-Sobolev space $ W{S^{k,p}}[\R_I^n]$ is defined as the set of all functions $f $ on $\R_I^n$ such that for every multi-index $ \alpha $ with $|\alpha| \leq k,$ the mixed partial derivative $$f^{(\alpha)} = \frac{\partial^{|\alpha|} f}{\partial x_{1}^{\alpha_1}....\partial x_{n}^{\alpha_n}}$$ exists in the weak sense in $K{S^p}[\R_I^n]$ that is $||f^{(\alpha)}||_{K{S^p}} < \infty.$\\
      Therefore the HK-Sobolev  space $W{S^{k,p}}[\R_I^n]$ is the space  $$W{S^{k,p}}[\R_I^n] = \{ f \in K{S^p}[\R_I^n]:~~\D^{\alpha} f \in K{S^p}[\R_I^n],~\forall |\alpha| \leq k \}.$$ We called $k $ as the order of the HK-Sobolev  space $W{S^{k,p}}[\R_I^n].$ We define a norm for $W{S^{k,p}}[\R_I^n] $ as: $$||f||_{W{S^{k,p}}[\R_I^n]}  =\left\{\begin{array}{c}\left(\sum\limits_{|\alpha| \leq k}||\D^{\alpha} f ||_{KS^p}^{p} \right)^{\frac{1}{p}}, \mbox{~for~} 1\leq p<\infty;\\
  \max\limits_{|\alpha| \leq k}||\D^k f||_{K{S^\infty}}~~, \mbox{~for~} p=\infty \end{array}\right.$$ For $k=1$ 
  \begin{align*}
  ||f||_{W{S^{1,p}}[\R_I^n]} &= \left(||f||_{K{S^p}[\R_I^n]}^{p} + ||\D f||_{K{S^p}[\R_I^n]}\right)^{\frac{1}{p}}\\&= \left\{ \sum_{r=1}^{\infty} \tau_r \left| \int_{\R_I^n} \zeta_r(x)f(x)d \lambda_\infty(x)\right|^p + \sum_{r=1}^{\infty} \tau_r \left| \int_{\R_I^n} \zeta_r(x) \D f(x) d\lambda_\infty(x)\right|^p\right\}^{\frac{1}{p}}
  \end{align*}
 and $$ ||f||_{W{S^{1,\infty}}[\R_I^n]} = \sup_{ r \geq 1}\left|\int_{\R_I^n} \zeta_r(x) f(x) d \lambda_\infty(x)\right| + \sup_{ r \geq 1 }\left| \int_{\R_I^n} \zeta_r (x) \D f(x) d \lambda_\infty(x)\right|.$$ We can consider equivalent norms \begin{align*}
 	&||f||_{W{S^{1,p}[\R_I^n]}} =\left( ||f||_{K{S^p}[\R_I^n]}^{p} + \sum_{j=1}^{n} ||\D_j f ||_{K{S^p}[\R_I^n]}^{p}\right)^{\frac{1}{p}},\\ &||f||_{W{S^{1,p}[\R_I^n]}}= ||f||_{K{S^p}[\R_I^n]} + \sum_{j=1}^{n} ||\D_j f ||_{K{S^p}[\R_I^n]}\end{align*}
  when $ 1 \leq p < \infty$  and $$||f||_{W{S^{1, \infty}}}= \max \{||f||_{L^\infty[\R_I^n]}, ||\D f||_{L^\infty[\R_I^n]},\dots, ||\D_n f||_{L^\infty[\R_I^n]} \}.$$ 
  \begin{rem}
  The elements of  $W{S^{k,p}}[\R_I^n]$  are equal a.e.
  \end{rem}
  \subsection{{\bf Completeness of HK-Sobolev  Spaces}:}
   A sequence $(f_i) $ of functions $f_i \in W{S^{k,p}}[\R_I^n]~~i=1,2,...$ converges  to a function $ f \in W{S^{k,p}}[\R_I^n] $ if for every $ \epsilon > 0 $ there exists $i_\epsilon $ such that $$||f_i - f||_{W{S^{k,p}}[\R_I^n]}< \epsilon \mbox{~ when~} i \geq i_\epsilon.$$
   Equivalently $$\lim\limits_{i \to \infty}||f_i - f||_{W{S^{k,p}}[\R_I^n]}=0$$ A sequence $(f_i) $ is a Cauchy sequence in $W{S^{k,p}}[\R_I^n] $ if for every $ \epsilon >0 $ there exists $i_\epsilon $ such that $$||f_i - f_j ||_{W{S^{k,p}}[\R_I^n]} < \epsilon  \mbox{~when~} i, j \geq i_\epsilon .$$
   \begin{thm}
   $W{S^{k,p}}[\R_I^n] $ is Banach space.
   \end{thm}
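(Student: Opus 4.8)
The plan is to show $W S^{k,p}[\R_I^n]$ is complete by the standard Sobolev strategy: take a Cauchy sequence in the $W S^{k,p}$-norm, exploit the fact that each individual derivative-component sequence is Cauchy in the already-complete space $K S^p[\R_I^n]$, extract limits, and then verify that these limits fit together as weak derivatives of a single limit function. The key structural input is that $K S^p[\R_I^n]$ is itself a Banach space (it is the completion of $L^p[\R_I^n]$, as recorded earlier in the excerpt), so convergence of each component is free; the real content is the compatibility of the limits.

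First I would take a Cauchy sequence $(f_i) \subset W S^{k,p}[\R_I^n]$. From the definition of the norm, for every multi-index $\alpha$ with $|\alpha| \le k$ one has
\[
\|\D^{\alpha} f_i - \D^{\alpha} f_j\|_{K S^p[\R_I^n]} \le \|f_i - f_j\|_{W S^{k,p}[\R_I^n]},
\]
so each sequence $(\D^{\alpha} f_i)_i$ is Cauchy in $K S^p[\R_I^n]$. Since $K S^p[\R_I^n]$ is complete, there exist functions $g_{\alpha} \in K S^p[\R_I^n]$ with $\D^{\alpha} f_i \to g_{\alpha}$ strongly in $K S^p[\R_I^n]$ for each $\alpha$; write $f := g_0$ for the limit corresponding to $\alpha = 0$.

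The main obstacle is to prove that $g_{\alpha} = \D^{\alpha} f$ in the weak sense, i.e.\ that the limits of the derivatives really are the derivatives of the limit. For this I would fix a test function $\varphi \in C_c^{\infty}[\R_I^n]$ and use the weak-derivative identity satisfied by each $f_i$, namely
\[
\langle f_i, \D^{\alpha} \varphi \rangle = (-1)^{|\alpha|} \langle \D^{\alpha} f_i, \varphi \rangle_{K S^p},
\]
which holds because $\D^{\alpha} f_i \in K S^p[\R_I^n]$ by membership in $W S^{k,p}$. The pairing against a fixed $\varphi$ (implemented through the functionals $F_k$ built from the $\zeta_r$ in the Construction subsection) is continuous with respect to $K S^p$-convergence, so I can pass to the limit $i \to \infty$ on both sides to obtain
\[
\langle f, \D^{\alpha} \varphi \rangle = (-1)^{|\alpha|} \langle g_{\alpha}, \varphi \rangle_{K S^p}
\quad \forall\, \varphi \in C_c^{\infty}[\R_I^n].
\]
By the definition of the weak derivative in $K S^p[\R_I^n]$, this says precisely that $\D^{\alpha} f = g_{\alpha}$ weakly for every $|\alpha| \le k$. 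In particular $\D^{\alpha} f \in K S^p[\R_I^n]$ for all such $\alpha$, so $f \in W S^{k,p}[\R_I^n]$.

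Finally I would conclude convergence in the ambient norm. Since $\D^{\alpha} f_i \to \D^{\alpha} f$ in $K S^p[\R_I^n]$ for each of the finitely many $\alpha$ with $|\alpha| \le k$, summing the $p$-th powers (or taking the maximum when $p = \infty$) gives $\|f_i - f\|_{W S^{k,p}[\R_I^n]} \to 0$. Hence every Cauchy sequence converges to an element of $W S^{k,p}[\R_I^n]$, and the space is complete. I expect the interchange of limit and pairing to be the only delicate point, and it is justified entirely by the boundedness of the functionals $F_k$ on $K S^p[\R_I^n]$ together with strong convergence of the derivative sequences.
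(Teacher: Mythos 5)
Your proposal is correct, and its core is the same as the paper's: from a Cauchy sequence $(f_i)$ in $WS^{k,p}[\R_I^n]$, the inequality $\|\D^\alpha f_i-\D^\alpha f_j\|_{KS^p[\R_I^n]}\le\|f_i-f_j\|_{WS^{k,p}[\R_I^n]}$ makes each component sequence $(\D^\alpha f_i)$ Cauchy in the complete space $KS^p[\R_I^n]$, producing limits $f_\alpha$. The paper's proof \emph{stops at exactly that point}: it never identifies $f_\alpha$ with the weak derivative $\D^\alpha f$ of the limit $f=f_0$, nor does it verify $\|f_i-f\|_{WS^{k,p}[\R_I^n]}\to 0$, which are precisely the steps that make the space complete. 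You supply both missing steps (passing to the limit in the weak-derivative identity against a fixed test function, then summing over the finitely many multi-indices), so on the analytic substance your argument is strictly more complete than the paper's. Two caveats. First, the paper also checks that $\|\cdot\|_{WS^{k,p}[\R_I^n]}$ is actually a norm (definiteness via the $L_{loc}^{1}$ argument, homogeneity, and the triangle inequality via Minkowski); you take this for granted, and for the statement ``is a Banach space'' it should at least be recorded. Second, the one delicate point you flag, continuity of the pairing $\langle\cdot,\varphi\rangle$ under $KS^p$-convergence, is not quite settled by invoking the functionals $F_k$: the $KS^p$-norm controls the quantities $\int_{\R_I^n}\zeta_r(x)g(x)\,d\lambda_\infty(x)$, whereas the weak-derivative pairing used in this paper is $\sum_{r}\tau_r\int_{\R_I^n}\zeta_r(x)g(x)\varphi(x)\,d\lambda_\infty(x)$, and moving the fixed $\varphi$ inside the norm's control requires a further argument (density of the $\zeta_r$ together with a uniform bound, or an approximation of $\zeta_r\varphi$ in terms of the $\zeta_{r'}$). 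This is the same leap the paper itself makes silently elsewhere (for instance in its proof that weak convergence in $W^{1,p}[\R_I^n]$ yields strong convergence in $WS^{1,p}[\R_I^n]$), so it does not place you below the paper's own standard of rigor, but in a fully rigorous write-up it is the step to nail down.
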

   \begin{proof}
   First we prove $||.||_{W{S^{k,p}}[\R_I^n]} $ is a norm.
   \begin{enumerate}
   \item $||f||_{W{S^{k,p}}[\R_I^n]}=0 ~~\Leftrightarrow f=0 $ a.e. in $\R_I^n.$\\$
  ||f||_{W{S^{k,p}}[\R_I^n]}=0 \Rightarrow ||f||_{K{S^p}[\R_I^n]}=0$  which implies $ f=0$  a.e. in $ \R_I^n.$\\
   Now  $ f=0 $ a.e. in $\R_I^n $, implies $$ \int_{\R_I^n} \D^\alpha f \varphi d \lambda_\infty = (-1)^{| \alpha|} \int_{\R_I^n} f \D^\alpha \varphi d \lambda_\infty =0 \mbox{~for~all~} \varphi \in C_0^\infty[\R_I^n].$$
   As $ f \in L_{loc}^{1}[\R_I^n] $ satisfies $ \int_{\R_I^n} f \varphi d \lambda_\infty = 0 $ for every  $ \varphi \in C_0^\infty[\R_I^n]$ then $ f=0 $ a.e. in $\R_I^n.$ This implies $ \D^\alpha f =0 $ a.e. in $\R_I^n$ for all $\alpha,~~|\alpha| \leq k.$
   \item $||\alpha f ||_{W{S^{k,p}}[\R_I^n]}= |\alpha|||f||_{W{S^{k,p}}[\R_I^n]},~~\alpha \in \R.$
   \item The triangle inequality for $ 1 \leq p < \infty $ follows from elementary inequality $( a+ b)^\alpha \leq a^\alpha + b^\alpha,~0< \alpha \leq 1 $ and Minkowski's inequality.
   \end{enumerate}
   Now, let $ (f_i) $ be Cauchy sequence in $W{S^{k,p}}[\R_I^n] $, since  $$|| \D^\alpha f_i - \D^\alpha f_j||_{K{S^p}[\R_I^n]} \leq ||f_i - f_j||_{W{S^{k,p}}[\R_I^n]}~, |\alpha | \leq k$$ it follows that $ ( \D^\alpha f_i) $ is Cauchy in $K{S^p}[\R_I^n],~|\alpha| \leq k,$ next follow the completeness of $K{S^p}[\R_I^n] $ implies that there exists $ f_{\alpha} \in K{S^p}[\R_I^n] $ such that $\D^\alpha f_i  \to f_{\alpha}$ in $K{S^p}[\R_I^n].$
   \end{proof}
   \begin{rem}
    $W{S^{k,p}}[\R_I^n] ,~1 \leq p < \infty $ is separable, however $W{S^{1, \infty}}[\R_I^n] $ is not separable.
   \end{rem}
    The space $W{S^{k,2}}[\R_I^n]$ is a Hilbert space with the inner product $$< f, g> _{W{S^{k,2}}[\R_I^n]} = \sum_{| \alpha| \leq k }< \D^\alpha f, \D^\alpha g > _{K{S^2}[\R_I^n]},$$ where $$<\D^\alpha f, \D^\alpha g > _{K{S^2}[\R_I^n]}= \sum_{r=1}^{\infty} \tau_r  \int_{\R_I^n} \zeta_r(x) \D^\alpha f \D^\alpha g d \lambda_\infty(x).$$
   Observe that $||f||_{W{S^{k,2}}[\R_I^n]} = <  f, f>_{W{S^{k,2}}[\R_I^n]}^{\frac{1}{2}}.$
   \begin{thm}
   For $1 \leq p < \infty,$ we have 
   \begin{enumerate}
   \item If $f_n \to f $ weakly in $W^{1,p}[\R_I^n] $ then $f_n \to f $ strongly in $W{S^{1,p}}[\R_I^n]$ i.e. every weakly compact subset of $W^{1,p}[\R_I^n] $ is compact in $W{S^{1,p}}[\R_I^n].$
   \item If $ 1 < p < \infty $ then $W{S^{k,p}}[\R_I^n]$ is uniformly convex.
   \item If $ 1 < p < \infty $ then $W{S^{k,p}}[\R_I^n]$ is reflexive.
   \item  $W{S^{k,\infty}}[\R_I^n] \subset W{S^{k,p}}[\R_I^n]$ for $ 1 \leq p < \infty.$
   \end{enumerate}
   \end{thm}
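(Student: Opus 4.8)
The plan is to treat all four parts as consequences of Theorem \ref{th25} (the corresponding properties of the scale $K{S^p}$) transported through the natural isometric embedding of $W{S^{k,p}}[\R_I^n]$ into a finite product of $K{S^p}[\R_I^n]$ spaces. Concretely, order the multi-indices $\alpha$ with $|\alpha|\le k$ and let $J$ send $f$ to the tuple $(\D^\alpha f)_{|\alpha|\le k}$. By the very definition of the $W{S^{k,p}}$ norm, $J$ is a linear isometry of $W{S^{k,p}}[\R_I^n]$ onto a subspace of the $\ell^p$-product $E=\prod_{|\alpha|\le k}K{S^p}[\R_I^n]$ (for $p=\infty$ one uses the max-norm instead). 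Since $W{S^{k,p}}[\R_I^n]$ is a Banach space by the preceding theorem, the range $J(W{S^{k,p}})$ is complete, hence a closed subspace of $E$, and each structural property I need is inherited from $E$, which in turn inherits it from its factors $K{S^p}[\R_I^n]$ via Theorem \ref{th25}.

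For part (1), I first note that $f_n\to f$ weakly in $W^{1,p}[\R_I^n]$ forces, for each $\alpha$ with $|\alpha|\le 1$, the componentwise weak convergence $\D^\alpha f_n\to\D^\alpha f$ in $L^p[\R_I^n]$: the map $f\mapsto(\D^\alpha f)_{|\alpha|\le1}$ identifies $W^{1,p}[\R_I^n]$ isometrically with a closed subspace of $\prod_{|\alpha|\le1}L^p[\R_I^n]$, and weak convergence in a finite product is exactly componentwise weak convergence. Applying the $\R_I^n$ analogue of Theorem \ref{th25}(1) to each component gives $\D^\alpha f_n\to\D^\alpha f$ strongly in $K{S^p}[\R_I^n]$ for all $|\alpha|\le1$. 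Summing $p$-th powers through the norm formula $||g||_{W{S^{1,p}}[\R_I^n]}^{p}=||g||_{K{S^p}[\R_I^n]}^{p}+\sum_{j=1}^{n}||\D_j g||_{K{S^p}[\R_I^n]}^{p}$ yields $||f_n-f||_{W{S^{1,p}}[\R_I^n]}\to 0$, the asserted strong convergence. The reformulation that weakly compact subsets of $W^{1,p}[\R_I^n]$ are norm-compact in $W{S^{1,p}}[\R_I^n]$ then follows from the Eberlein--Smulian theorem: a weakly compact set is weakly sequentially compact, and the weak-to-strong statement just proved sends each weakly convergent subsequence to a norm-convergent one, so its image is sequentially compact, hence compact.

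Parts (2)--(4) are now short. For (2), Theorem \ref{th25}(2) gives that $K{S^p}[\R_I^n]$ is uniformly convex for $1<p<\infty$; a finite $\ell^p$-sum of uniformly convex spaces is uniformly convex for $1<p<\infty$, and uniform convexity passes to any subspace (the modulus of convexity only improves on a smaller unit ball), so $W{S^{k,p}}[\R_I^n]\cong J(W{S^{k,p}})\subset E$ is uniformly convex. Part (3) is then immediate from the Milman--Pettis theorem, a uniformly convex Banach space being reflexive; alternatively, $E$ is reflexive as a finite product of the reflexive spaces $K{S^p}$ (reflexive since their duals are $K{S^q}$ by Theorem \ref{th25}(3)), whence the closed subspace $J(W{S^{k,p}})$ is reflexive. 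For (4), if $f\in W{S^{k,\infty}}[\R_I^n]$ then each $\D^\alpha f$, $|\alpha|\le k$, lies in $K{S^\infty}[\R_I^n]$; by Theorem \ref{th25}(4), $K{S^\infty}[\R_I^n]\subset K{S^p}[\R_I^n]$ with $||\D^\alpha f||_{K{S^p}}\le||\D^\alpha f||_{K{S^\infty}}$, since $\sum_r\tau_r=1$ forces $\sum_r\tau_r|\int\zeta_r\D^\alpha f|^p\le(\sup_k|\int\zeta_k\D^\alpha f|)^p$. Hence all weak derivatives of $f$ up to order $k$ lie in $K{S^p}[\R_I^n]$ and $f\in W{S^{k,p}}[\R_I^n]$.

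The main obstacle is the bookkeeping in part (1): one must verify that weak convergence in the Sobolev space $W^{1,p}$ genuinely descends to weak convergence of every derivative component in $L^p$, which is precisely where the isometric product description of $W^{1,p}$ (and the matching description of $W{S^{1,p}}$) does the real work, and one must invoke the $\R_I^n$ version of Theorem \ref{th25}(1) rather than the literally stated $\R_I^\infty$ version. Everything else is a transparent transfer of the known $K{S^p}$ properties through the product embedding.
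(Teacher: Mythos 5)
Your proposal is correct and follows essentially the same route as the paper: the isometric embedding $f \mapsto (\D^\alpha f)_{|\alpha|\le k}$ into a product of $K{S^p}[\R_I^n]$ spaces for parts (2)--(3) (with reflexivity deduced from uniform convexity), and the norm comparison using $\sum_r \tau_r = 1$ for part (4). In part (1) your argument is in fact tighter than the paper's: the paper jumps from $\int_{\R_I^n}\zeta_r(f_n-f)\,d\lambda_\infty \to 0$ directly to $\int_{\R_I^n}\zeta_r(\D^\alpha f_n-\D^\alpha f)\,d\lambda_\infty \to 0$ without justification, whereas you correctly obtain weak $L^p$-convergence of each derivative component from weak $W^{1,p}$-convergence (weak-to-weak continuity of the bounded operator $\D^\alpha$) before applying the weak-to-strong property of $K{S^p}$ componentwise.
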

   \begin{proof}
   (1) Using  \cite[Theorem 1.38]{JH} and  $(f_n) $ is weakly convergence in $W^{1,p}[\R_I^n] $ to  $ f \in W^{1,p}[\R_I^n], $ we have that $ f_n \to f \in L^p[\R_I^n]$ weakly. This implies $f_n \to f \in K{S^p}[\R_I^n] $ strongly. Consequently,
$
 \int_{\R_I^n}\zeta_r(x)[ f_n(x) - f(x)] d \lambda_\infty(x) \to 0
$ and so
  $
 \int_{\R_I^n}\zeta_r(x)[\D^\alpha f_n(x) - \D^\alpha f(x)] d \lambda_\infty(x) \to 0.
 $
   Therefore, $ f_n \to f \in W{S^{1,p}}[\R_I^n]. $\\
   (2) Let $ T: W{S^{k,p}}[\R_I^n] \to K{S^p}[\R_I^n]$, defined as $ x \to (\D^\alpha x)_{|\alpha| \leq k}$,  be a closed and isometric embedding. Since  $K{S^p}[\R_I^n] $ is uniformly convex for $ 1 < p < \infty,$ so  is any closed subspace, and hence as $W{S^{k,p}}[\R_I^n] $ is isometric to its image under $T,$ it follows that  $W{S^{k,p}}[\R_I^n] $ is uniformly convex for  these $p.$\\
   (3) Follows from part (2).\\
   (4) Let $ f \in W{S^{k, \infty}}[\R_I^n] .$    This implies that $| \int_{\R_I^n} \zeta_r(x) \D^\alpha f(x) d \lambda_\infty(x)| $ is uniformly bounded for all $n.$ Then $|\int_{\R_I^n} \zeta_r(x) \D^\alpha f(x) d \lambda_\infty (x) |^p $ is uniformly bounded for each $p,~1 \leq p < \infty.$ So, it is clear $$\left[\sum_{r=1}^{\infty}\tau_r \left| \int_{\R_I^n} \zeta_r(x)\D^\alpha f(x) d \lambda_\infty(x)\right|^p\right]^{\frac{1}{p}} < \infty. $$ Therefore, $ f \in W{S^{k,p}}[\R_I^n].$
    \end{proof}
   \begin{thm}
   $W^{k,p}[\R_I^n] \subset W{S^{k,p}}[\R_I^n] $ as continuous dense embedding for $ 1 \leq p < \infty.$
   \end{thm}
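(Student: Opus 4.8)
The claim to establish is the continuous dense embedding
$W^{k,p}[\R_I^n] \subset WS^{k,p}[\R_I^n]$ for $1 \le p < \infty$.
The natural plan is to leverage the three pillars already assembled in the excerpt: first, the continuous dense embedding $L^p[\R_I^n] \hookrightarrow KS^p[\R_I^n]$ (Theorem following the norm definition); second, the fact that weak derivatives in $L^p[\R_I^n]$ coincide with the weak derivatives used to define $WS^{k,p}[\R_I^n]$ in $KS^p[\R_I^n]$; and third, the definition of the $WS^{k,p}$ norm as an $\ell^p$-aggregation of the $KS^p$ norms of the weak derivatives $\D^\alpha f$ with $|\alpha| \le k$.

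\textbf{Continuity of the embedding.}
First I would show the inclusion is bounded. Fix $f \in W^{k,p}[\R_I^n]$; then every weak derivative $\D^\alpha f$ with $|\alpha| \le k$ lies in $L^p[\R_I^n]$. Since $L^p[\R_I^n] \subset KS^p[\R_I^n]$ is a continuous embedding, there is a constant $C > 0$ (independent of $\alpha$, or take the maximum of finitely many such constants over $|\alpha| \le k$) with $\|\D^\alpha f\|_{KS^p[\R_I^n]} \le C \|\D^\alpha f\|_{L^p[\R_I^n]}$. Because the number of multi-indices $\alpha$ with $|\alpha| \le k$ is finite, raising to the $p$-th power and summing over these $\alpha$ gives
$$
\|f\|_{WS^{k,p}[\R_I^n]}^p
= \sum_{|\alpha| \le k} \|\D^\alpha f\|_{KS^p[\R_I^n]}^p
\le C^p \sum_{|\alpha| \le k} \|\D^\alpha f\|_{L^p[\R_I^n]}^p
= C^p \|f\|_{W^{k,p}[\R_I^n]}^p,
$$
so the inclusion map is bounded with norm at most $C$. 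This also confirms that $f \in WS^{k,p}[\R_I^n]$, since each $\D^\alpha f$ has finite $KS^p$ norm.

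\textbf{Density.}
For density, I would exploit that $C_c^\infty[\R_I^n]$ (or the smooth functions in the excerpt's sense) is dense in $W^{k,p}[\R_I^n]$ in the classical Sobolev norm, and transport this through the continuity just proved. Concretely, given $f \in WS^{k,p}[\R_I^n]$, one approximates using the mollifier construction from Section 2: the mollified functions $f_\epsilon$ are smooth, and by the mollifier lemma $f_\epsilon \to f$ together with $\D^\alpha f_\epsilon = (\D^\alpha f)_\epsilon \to \D^\alpha f$ in $KS^p$ for each $|\alpha| \le k$, whence $f_\epsilon \to f$ in $WS^{k,p}[\R_I^n]$. Since the smooth approximants lie in $W^{k,p}[\R_I^n]$, the embedded image of $W^{k,p}[\R_I^n]$ is dense. \textbf{The main obstacle} will be justifying that the weak derivative commutes with the $KS^p$-mollification and that the mollifier estimate of the earlier lemma extends from single functions to all derivatives $\D^\alpha$ simultaneously on $\R_I^n$; the finiteness of the index set $\{\alpha : |\alpha| \le k\}$ is what rescues the argument, letting one pass from the per-derivative convergence in $KS^p$ to convergence in the aggregated $WS^{k,p}$ norm without uniformity-in-$\alpha$ issues.
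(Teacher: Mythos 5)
Your continuity argument is, in substance, the paper's entire proof: the paper likewise bounds the $W{S^{k,p}}$ norm by comparing $\|\D^\alpha f\|_{K{S^p}}$ with $\|\D^\alpha f\|_{L^p}$ one derivative at a time and then aggregating over the finitely many multi-indices $|\alpha|\le k$. The only cosmetic difference is that the paper derives the $K{S^p}$-versus-$L^p$ comparison inline, via the Jensen-type step $\left|\int_{\R_I^n}\zeta_r(x)\,\D^\alpha f(x)\,d\lambda_\infty(x)\right|^p\le\int_{\R_I^n}\zeta_r(x)|\D^\alpha f(x)|^p\,d\lambda_\infty(x)$ together with $\sum_r\tau_r=1$, which yields the explicit constant $C=1$ (i.e.\ $\|f\|_{W{S^{k,p}}[\R_I^n]}\le\|f\|_{W^{k,p}[\R_I^n]}$), rather than quoting the embedding $L^p[\R_I^n]\subset K{S^p}[\R_I^n]$ as a black box. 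So that half matches the paper.

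The divergence is in the density half, which the paper never proves at all: its proof stops at the norm inequality. You attempt it, which is the right instinct, but your sketch has a concrete gap. Given $f\in W{S^{k,p}}[\R_I^n]$, the mollifications $f_\epsilon$ are indeed smooth, and (granting $\D^\alpha f_\epsilon=(\D^\alpha f)_\epsilon$ and the Section 2 convergence lemma) they converge to $f$ in the $W{S^{k,p}}$ norm; but smoothness does not place $f_\epsilon$ in $W^{k,p}[\R_I^n]$. A general element of $K{S^p}[\R_I^n]$ is not globally $L^p$, and convolution with a mollifier cannot create the global $L^p$ integrability of $f_\epsilon$ and its derivatives that membership in $W^{k,p}[\R_I^n]$ requires; so ``the smooth approximants lie in $W^{k,p}[\R_I^n]$'' is precisely the unproved point. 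Note also that the paper's mollifier lemma is stated under the hypothesis $\mbox{supp}(f)\subset\subset\G$, so even the convergence $f_\epsilon\to f$ in $K{S^p}$ is not available to you for general $f$ without preparation. The repair is a preliminary truncation: show first that $\eta_R f\to f$ in $W{S^{k,p}}[\R_I^n]$ as $R\to\infty$ for smooth cutoffs $\eta_R$ (Leibniz rule, plus the fact that the summable weights $\tau_r$ make distant cubes contribute arbitrarily little to the $K{S^p}$ norm), then mollify the compactly supported function $\eta_R f$; the result is smooth with compact support, hence genuinely in $W^{k,p}[\R_I^n]$, and a diagonal choice of parameters finishes the density claim.
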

   \begin{proof}
    Let $ f \in W^{k,p}[\R_I^n] $ for $ 1 \leq p < \infty.$ Then we have 
    \begin{align*}
    ||f||_{W{S^{k,p}}[\R_I^n]} &= \left( \sum_{|\alpha| \leq k}|| \D^\alpha f ||_{K{S^p}}^{p}\right)^{\frac{1}{p}} \\&= \left( \sum_{|\alpha| \leq k}  \sum_{r=1}^{\infty} \tau_r \left| \int_{\R_I^n} \zeta_r(x) \D^\alpha f(x) d \lambda_\infty(x)\right|^p\right)^{\frac{1}{p}} \\&\leq \left[ \sum_{|\alpha| \leq k }\sum_{r=1}^{\infty} \tau_r  \int_{\R_I^n} \zeta_r(x)| \D^\alpha f(x)|^p d \lambda_\infty (x))^{\frac{1}{p}}\right] \\& \leq \sup_{|\alpha| \leq k}\left( \int_{\R_I^n} \zeta_r(x) | \D^\alpha f(x) |^p d \lambda_\infty(x)\right)^{\frac{1}{p}}\leq ||f||_{W^{k,p}[\R_I^n]}.
    \end{align*}
   \end{proof}
   \begin{thm}
   $W{S^{1,p}}[\R_I^n] \to K{S^p}[\R_I^n]$ as continuous embedding for $ 1 \leq p < \infty.$
   \end{thm}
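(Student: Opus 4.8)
The plan is to verify the two defining properties of a continuous embedding for the natural inclusion map $\iota : W{S^{1,p}}[\R_I^n] \to K{S^p}[\R_I^n]$, namely that $\iota$ is a well-defined injective linear map and that it is bounded.

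First I would note that well-definedness and injectivity are immediate from the definition of the space. Since
$$W{S^{1,p}}[\R_I^n] = \{ f \in K{S^p}[\R_I^n] : \D^{\alpha} f \in K{S^p}[\R_I^n],~\forall\, |\alpha| \leq 1 \},$$
every $f \in W{S^{1,p}}[\R_I^n]$ is already an element of $K{S^p}[\R_I^n]$, so $\iota(f) = f$ is simply the set-theoretic inclusion; it is plainly linear and injective.

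The substantive step is the boundedness estimate. For $1 \leq p < \infty$ the norm on $W{S^{1,p}}[\R_I^n]$ is
$$||f||_{W{S^{1,p}}[\R_I^n]} = \left( ||f||_{K{S^p}[\R_I^n]}^{p} + \sum_{j=1}^{n} ||\D_j f||_{K{S^p}[\R_I^n]}^{p} \right)^{\frac{1}{p}},$$
in which $||f||_{K{S^p}[\R_I^n]}^{p}$ occurs as one of the nonnegative summands. I would therefore estimate
$$||f||_{K{S^p}[\R_I^n]}^{p} \leq ||f||_{K{S^p}[\R_I^n]}^{p} + \sum_{j=1}^{n} ||\D_j f||_{K{S^p}[\R_I^n]}^{p} = ||f||_{W{S^{1,p}}[\R_I^n]}^{p},$$
and, taking $p$-th roots, conclude that $||f||_{K{S^p}[\R_I^n]} \leq ||f||_{W{S^{1,p}}[\R_I^n]}$. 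This shows that $\iota$ is bounded with operator norm at most $1$, hence continuous.

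There is no genuine obstacle here; the only point deserving a word of care is to check this domination against whichever of the equivalent $W{S^{1,p}}$-norms is in force (for instance the unsummed form $||f||_{K{S^p}} + \sum_{j=1}^{n} ||\D_j f||_{K{S^p}}$), but in every case $||f||_{K{S^p}[\R_I^n]}$ is one of the dominated terms, so the bound $||f||_{K{S^p}} \leq ||f||_{W{S^{1,p}}}$ persists. I would also remark that, in contrast with the preceding dense-embedding theorems, no density or surjectivity is asserted; the content is precisely the continuity of the inclusion.
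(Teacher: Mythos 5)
Your proposal is correct and takes essentially the same route as the paper: the paper's proof also reduces the claim to the norm inequality $||f||_{K{S^p}[\R_I^n]} \leq ||f||_{W{S^{1,p}}[\R_I^n]}$ for $f \in W{S^{1,p}}[\R_I^n]$. You in fact do slightly better, supplying the one-line derivation of that inequality (the $K{S^p}$-norm appears as a summand in the $W{S^{1,p}}$-norm) which the paper merely asserts, and correctly omitting the paper's preliminary detour through $W^{1,p}[\R_I^n] \to L^q[\R_I^n] \subset K{S^p}[\R_I^n]$, which is not needed for this statement.
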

   \begin{proof}
   As \cite{MV}, we have $W^{1,p}[\R_I^n] \to L^q[\R_I^n] $ for $ 1\le p \leq q < \infty$. Also, $L^q[\R_I^n] \subset K{S^p}[\R_I^n]$ as continuous dense for $1 \leq p < \infty$. So, $W^{1,p}[\R_I^n] \to K{S^p}[\R_I^n] $ for $1 \leq p < \infty.$ 
   We need to prove $W{S^{1,p}}[\R_I^n] \to K{S^p}[\R_I^n]$. For this we find $$||f||_{K{S^p}[\R_I^n]} \leq ||f||_{W{S^{1,p}}[\R_I^n]} $$ for $ f \in W{S^{1,p}}[\R_I^n]$, which gives our result.
   \end{proof}
   \subsection{HK-Sobolev  Spaces on $\R^\infty$:}
   In this section we will discuss about $W{S^{k,p}}[\R_I^\infty]$. 
   As $W{S^{k,p}}[\R_I^n] \subset W{S^{k,p}}[\R_I^{n+1}]$,  we can define $$W{S^{k,p}}[\widehat{\R_I^\infty}] = \bigcup_{n=1}^{\infty} W{S^{k,p}}[\R_I^n].$$
   \begin{Def}
    We say that for $1 \leq p < \infty,$ a measurable function $ f \in W{S^{k,p}}[\R_I^\infty] $, if there exists a Cauchy sequence $\{f_n\} \subset W{S^{k,p}}[\widehat{\R_I^\infty}]$ with $f_n \in W{S^{k,p}}[\R_I^n] $ and $$\lim\limits_{n \to \infty} \D^\alpha f_n(x) = \D^\alpha f(x),~~\lambda_\infty- a.e.$$
   \end{Def}
   \begin{Def}
    Let $f \in W{S^{k,p}}[\R_I^\infty],$ we define the integral by $$\int_{\R_I^\infty} \D^\alpha f(x) d \lambda_\infty(x) = \lim\limits_{n \to \infty} \int_{\R_I^\infty}\D^\alpha f_n(x) d \lambda_\infty(x) $$ where $f_n \in W{S^{k,p}}[\R_I^\infty] $ for all $n$ and the family $\{f_n\}$ is a Cauchy sequence.
   \end{Def}
   \begin{thm}
    $W{S^{k,p}}[\widehat{\R_I^\infty}]= W{S^{k,p}}[\R_I^\infty].$
   \end{thm}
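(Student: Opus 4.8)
The plan is to mirror the argument already used for Theorem \ref{th18} and its $L^p$ and $KS^p$ counterparts, reducing the assertion to the previously established identity $KS^p[\widehat{\R_I^\infty}] = KS^p[\R_I^\infty]$ applied simultaneously to all the weak derivatives of order at most $k$. Throughout I work with $1 \le p < \infty$, as in the definition of $WS^{k,p}[\R_I^\infty]$.

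First I would dispose of the trivial inclusion $WS^{k,p}[\widehat{\R_I^\infty}] \subseteq WS^{k,p}[\R_I^\infty]$. If $f \in WS^{k,p}[\widehat{\R_I^\infty}] = \bigcup_{n} WS^{k,p}[\R_I^n]$, then $f \in WS^{k,p}[\R_I^N]$ for some $N$; since the spaces increase, the eventually constant sequence $f_n = f$ (for $n \ge N$) lies in $WS^{k,p}[\widehat{\R_I^\infty}]$, is trivially Cauchy in the $WS^{k,p}$ norm, and satisfies $\D^\alpha f_n \to \D^\alpha f$ a.e.\ for every $|\alpha| \le k$. Hence $f \in WS^{k,p}[\R_I^\infty]$ by definition.

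The substantive direction is $WS^{k,p}[\R_I^\infty] \subseteq WS^{k,p}[\widehat{\R_I^\infty}]$. Let $f \in WS^{k,p}[\R_I^\infty]$ and choose a Cauchy sequence $\{f_n\}$ with $f_n \in WS^{k,p}[\R_I^n]$ and $\D^\alpha f_n \to \D^\alpha f$ a.e.\ for all $|\alpha| \le k$. From the form of the $WS^{k,p}$ norm, the Cauchy condition forces each of the finitely many sequences $(\D^\alpha f_n)_n$, $|\alpha|\le k$, to be Cauchy in $KS^p$. By completeness together with the already-proved equality $KS^p[\widehat{\R_I^\infty}] = KS^p[\R_I^\infty]$, the $KS^p$-limit of each such sequence agrees a.e.\ with an element of $\bigcup_m KS^p[\R_I^m]$; that is, $\D^\alpha f \in KS^p[\R_I^{m_\alpha}]$ for some $m_\alpha \in \N$. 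Because there are only finitely many multi-indices with $|\alpha| \le k$, I set $N = \max_{|\alpha| \le k} m_\alpha$ and conclude that $f$ together with all its weak derivatives $\D^\alpha f$, $|\alpha| \le k$, belongs to $KS^p[\R_I^N]$. By the definition of $WS^{k,p}[\R_I^N]$ this yields $f \in WS^{k,p}[\R_I^N] \subseteq WS^{k,p}[\widehat{\R_I^\infty}]$, closing the reverse inclusion.

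The step I expect to require the most care is verifying that the a.e.-limits $g_\alpha := \lim_{n} \D^\alpha f_n$ are genuinely the weak derivatives of the limit $f$, rather than merely some $KS^p$ functions; this is the closedness of weak differentiation under $KS^p$ convergence, obtained by passing to the limit in the defining identity $\langle f_n, \partial^\alpha \varphi\rangle = (-1)^{|\alpha|}\langle \D^\alpha f_n, \varphi\rangle_{KS^p}$ for $\varphi \in C_c^\infty[\R_I^\infty]$, using that strong $KS^p$ convergence permits interchanging the limit with the pairing against each fixed $\zeta_r$. Equally, one must confirm that the passage from $\widehat{\R_I^\infty}$ to its closure $\R_I^\infty$ only alters functions on a $\lambda_\infty$-null set, which is precisely the content imported from the $KS^p$ theorem; everything else is the routine bookkeeping already met for $L^1$, $L^p$, and $KS^p$.
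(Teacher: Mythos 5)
Your attempt cannot be matched against the paper's own argument for a simple reason: the paper has none. The theorem is stated bare, exactly like its $L^1$, $L^p$ and $K{S^p}$ predecessors, with at most the surrounding remark that functions in the union differ from functions in its closure by sets of measure zero. Your strategy --- reducing the statement to the earlier identity $K{S^p}[\widehat{\R_I^\infty}] = K{S^p}[\R_I^\infty]$ applied to each weak derivative, plus eventually constant sequences for the easy inclusion --- is the natural reconstruction of what the paper leaves implicit, and the forward inclusion is fine modulo the paper's own conventions.

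However, your reverse inclusion has a genuine gap at its crucial step. You assert that ``there are only finitely many multi-indices with $|\alpha| \le k$'' and use this to set $N = \max_{|\alpha|\le k} m_\alpha$. That finiteness is true over $\R_I^n$, where $\alpha = (\alpha_1,\dots,\alpha_n)$, but it is false over $\R_I^\infty$: there the multi-indices range over $\N_0^\infty$ (infinite tuples with all but finitely many entries zero), so already the first-order indices $e_1, e_2, e_3, \dots$ supply infinitely many $\alpha$ with $|\alpha| \le k$. Hence $\{m_\alpha : |\alpha|\le k\}$ is an infinite set of integers whose maximum need not exist, and nothing in your argument gives a uniform bound: a Cauchy sequence $f_n \in W{S^{k,p}}[\R_I^n]$ can produce a limit with nontrivial derivatives in infinitely many distinct coordinate directions, each forcing a different minimal level $m_\alpha$. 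The argument therefore collapses exactly where the infinite-dimensional difficulty sits, and any repair must explain why the levels $m_\alpha$ stabilize. (A related issue, which the paper itself also ignores, is that for $f \in W{S^{k,p}}[\R_I^N]$ the derivative $\D^{e_j}f$ in a direction $j > N$ differentiates the indicator factor $\chi_{I_N}$ and is not a function at all; an honest proof of either inclusion must specify how multi-indices touching coordinates beyond the finite level are to be treated.)
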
   
    We define  HK-Sobolev  space $W{S^{k,p}}[\R_I^\infty] $  as  $$W{S^{k,p}}[\R_I^\infty] = \{ f\in K{S^p}[\R_I^\infty]:~\D^{\alpha} f \in K{S^p}[\R_I^\infty],~\forall |\alpha| \leq k \}.$$ We called $k $ as the order of the HK-Sobolev  space $W{S^{k,p}}[\R_I^\infty]$. We define a	 norm for $W{S^{k,p}}[\R_I^\infty] $ as: $$||f||_{W{S^{k,p}}[\R_I^\infty]}  =\left\{\begin{array}{c}\left(\sum\limits_{|\alpha| \leq k}||\D^{\alpha} f ||_{KS^p}^{p} \right)^{\frac{1}{p}}, \mbox{~for~} 1\leq p<\infty;\\
  \max\limits_{|\alpha| \leq k}||\D^k f||_{K{S^\infty}}~~, \mbox{~for~} p=\infty \end{array}\right.$$ For $k=1$
  \begin{align*}
  ||f||_{W{S^{1,p}}[\R_I^\infty]} &= \left(||f||_{K{S^p}[\R_I^\infty]}^{p} + ||\D f||_{K{S^p}[\R_I^\infty]}\right)^{\frac{1}{p}}\\&= \left\{ \sum_{n=1}^{\infty} \tau_r \left| \int_{\R_I^\infty} \zeta_r(x)f(x)d \lambda_\infty(x)\right|^p + \sum_{r=1}^{\infty} \tau_r \left| \int_{\R_I^\infty} \zeta_r(x) \D f(x) d\lambda_\infty\right|^p\right\}^{\frac{1}{p}}
  \end{align*}
  and $$ ||f||_{W{S^{1,\infty}}[\R_I^\infty]} = \sup_{ r \geq 1}\left|\int_{\R_I^\infty} \zeta_r(x) f(x) d \lambda_\infty(x)\right| + \sup_{ r \geq 1 }\left| \int_{\R_I^\infty} \zeta_r (x) \D f(x) d \lambda_\infty(x)\right|.$$ We can consider equivalent norms 
  \begin{align*}
  	&||f||_{W{S^{1,p}[\R_I^\infty]}} =\left( ||f||_{K{S^p}[\R_I^\infty]}^{p} + \sum_{j=1}^{n} ||\D_j f ||_{K{S^p}[\R_I^\infty]}^{p}\right)^{\frac{1}{p}},\\
  	&||f||_{W{S^{1,p}[\R_I^\infty]}}= ||f||_{K{S^p}[\R_I^\infty]} + \sum_{j=1}^{n} ||\D_j f ||_{K{S^p}[\R_I^\infty]}\end{align*} when $ 1 \leq p < \infty$  and $$||f||_{W{S^{1, \infty}}}= \max \{||f||_{L^\infty[\R_I^\infty]}, ||\D f||_{L^\infty[\R_I^\infty]},.., ||\D_n f||_{L^\infty[\R_I^\infty]} \}.$$ 
  \begin{rem} The functions 
   $W{S^{k,p}}[\R_I^\infty]$  are equal a.e.
  \end{rem}
  \begin{thm}
  Let $f \in W{S^{k,p}}[\R_I^\infty],$  then  $$\int_{\R_I^\infty} \D^\alpha f(x) d \lambda_\infty(x) = \lim\limits_{n \to \infty}\int_{\R_I^\infty} \D^\alpha f_n(x) d \lambda_\infty(x) $$ where $f_n \in W{S^{k,p}}[\R_I^\infty] $ for all $n$ and the family $\{f_n\}$ is a Cauchy sequence.
  \end{thm}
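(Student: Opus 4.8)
The plan is to follow the template already used to establish existence of the $L^p[\R_I^\infty]$ integral (the theorem after Definition \ref{def114}) and of the $KS^p[\R_I^\infty]$ integral (the theorem after Definition \ref{Def17}). Two things must be shown: that the scalar sequence $\left\{\int_{\R_I^\infty}\D^\alpha f_n\,d\lambda_\infty\right\}$ converges, and that its limit does not depend on which Cauchy sequence $\{f_n\}$ representing $f$ we choose.

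First I would record the estimate coming directly from the definition of the $WS^{k,p}$ norm: for every multi-index $\alpha$ with $|\alpha|\le k$,
$$\|\D^\alpha f_i-\D^\alpha f_j\|_{KS^p[\R_I^\infty]}\le \|f_i-f_j\|_{WS^{k,p}[\R_I^\infty]},$$
since $\|\D^\alpha(\cdot)\|_{KS^p}^p$ is a single summand of the $WS^{k,p}$ norm. As $\{f_n\}$ is Cauchy in $WS^{k,p}[\R_I^\infty]$, this shows $\{\D^\alpha f_n\}$ is Cauchy in $KS^p[\R_I^\infty]$ for each such $\alpha$; by completeness of $KS^p[\R_I^\infty]$ it converges, and its limit is exactly $\D^\alpha f$. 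Moreover each $f_n\in WS^{k,p}[\R_I^n]$, so $\D^\alpha f_n\in KS^p[\R_I^n]$ and the integral $\int_{\R_I^\infty}\D^\alpha f_n\,d\lambda_\infty=\int_{\R_I^n}\D^\alpha f_n\,d\lambda_\infty$ already exists and is unique by the $KS^p$-integral theorem following Definition \ref{Def17}.

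Next I would show the scalar sequence $\left\{\int_{\R_I^\infty}\D^\alpha f_n\,d\lambda_\infty\right\}$ is Cauchy in $\C$, hence convergent, which gives existence of the limit in the statement. For well-definedness I would use the standard interleaving device: if $\{f_n\}$ and $\{g_n\}$ are two Cauchy sequences in $WS^{k,p}[\R_I^\infty]$ both realizing $f$, then the interleaved sequence $f_1,g_1,f_2,g_2,\ldots$ is again Cauchy, so its integral sequence converges; the two original integral sequences are subsequences of one convergent sequence and therefore share the common limit.

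The hard part will be the Cauchy estimate for the scalars, because the total integral $g\mapsto\int_{\R_I^\infty}g\,d\lambda_\infty$ is not one of the bounded functionals $F_k(g)=\int \zeta_r g\,d\lambda_\infty$ that define the $KS^p$ norm (the $\zeta_r$ are characteristic functions of bounded cubes, not of all of $\R_I^\infty$), so $KS^p$-norm smallness of $\D^\alpha f_i-\D^\alpha f_j$ does not instantly bound the difference of their total integrals. I would circumvent this exactly as in the scalar-valued integral theorems above: work at the tame level on $\R_I^n$, where $\int_{\R_I^n}\D^\alpha f_n\,d\lambda_\infty$ is a genuine finite-dimensional integral, reduce to the nonnegative case by the standard decomposition $g=g^+-g^-$ so that an increasing approximating sequence can be chosen, invoke the already-proven existence of the $KS^p[\R_I^n]$ integral, and then pass to the limit $n\to\infty$ along the Cauchy sequence. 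Combined with the interleaving argument, this yields both existence and independence of the representing sequence, which is precisely the assertion of the theorem.
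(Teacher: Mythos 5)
Your proposal is correct and follows essentially the same route as the paper: the paper's entire proof is the one-line instruction to ``follow Section 3.1,'' i.e., use the estimate $||\D^\alpha f_i-\D^\alpha f_j||_{K{S^p}[\R_I^\infty]}\leq ||f_i-f_j||_{W{S^{k,p}}[\R_I^\infty]}$ to make $\{\D^\alpha f_n\}$ Cauchy in $K{S^p}[\R_I^\infty]$, and then invoke the existence/uniqueness theorem for the $K{S^p}$ integral, whose proof in the paper is exactly your device (uniqueness from Cauchyness; existence by reducing to $f\geq 0$, choosing an increasing sequence, and applying the standard decomposition). Your write-up is, if anything, more explicit than the paper's, since you also flag that the total integral is not controlled by the $K{S^p}$ norm and supply the interleaving argument for independence of the representing Cauchy sequence.
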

   For the proof of this theorem, we follow the Section 3.1.
   \begin{thm}\label{th311}
   For $ 1 \leq p < \infty,$ we have 
   \begin{enumerate}
   \item If $f_n \to f $ weakly in $W^{1,p}[\R_I^\infty] $ then $f_n \to f $ strongly in $W{S^{1,p}}[\R_I^\infty]$ i.e. every weakly compact subset of $W^{1,p}[\R_I^\infty] $ is compact in $W{S^{1,p}}[\R_I^\infty]$.
   \item If $ 1 < p < \infty $ then $W{S^{k,p}}[\R_I^\infty]$ is uniformly convex.
   \item If $ 1 < p < \infty $ then $W{S^{k,p}}[\R_I^\infty]$ is reflexive.
   \item  $W{S^{k,\infty}}[\R_I^\infty] \subset W{S^{k,p}}[\R_I^\infty]$ for $ 1 \leq p < \infty .$
   \end{enumerate}
   \end{thm}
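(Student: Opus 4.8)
The plan is to mirror, step by step, the proof already given for the finite-dimensional spaces $W{S^{k,p}}[\R_I^n]$, replacing each ingredient by its $\R_I^\infty$ counterpart: the role played there by $K{S^p}[\R_I^n]$ is now played by $K{S^p}[\R_I^\infty]$, whose relevant structural properties---weak-to-strong convergence, uniform convexity, and duality---are precisely Theorem \ref{th25}. Throughout I would use that $W{S^{k,p}}[\R_I^\infty]$ is the closure of $\bigcup_{n} W{S^{k,p}}[\R_I^n]$ together with the identity $W{S^{k,p}}[\widehat{\R_I^\infty}] = W{S^{k,p}}[\R_I^\infty]$, so that any property verified on each $\R_I^n$ and stable under the limit passes to $\R_I^\infty$.

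For parts (2) and (3) I would introduce the linear map
\[
T: W{S^{k,p}}[\R_I^\infty] \to \prod_{|\alpha| \leq k} K{S^p}[\R_I^\infty], \qquad T f = (\D^\alpha f)_{|\alpha| \leq k},
\]
where the finite product carries the $\ell^p$ combination of the coordinate $K{S^p}$ norms. By the very definition of $||\cdot||_{W{S^{k,p}}[\R_I^\infty]}$, the map $T$ is a linear isometry, and its range is closed because $W{S^{k,p}}[\R_I^\infty]$ is complete. By Theorem \ref{th25}(2) each factor $K{S^p}[\R_I^\infty]$ is uniformly convex for $1<p<\infty$, and a finite $\ell^p$ product of uniformly convex spaces is again uniformly convex; since a closed subspace inherits uniform convexity, $W{S^{k,p}}[\R_I^\infty]$ is uniformly convex, giving (2). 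Part (3) then follows from the Milman--Pettis theorem, or directly from the reflexivity of each $K{S^p}[\R_I^\infty]$ (its dual being $K{S^q}[\R_I^\infty]$ by Theorem \ref{th25}(3)) together with the fact that a closed subspace of a reflexive space is reflexive.

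For part (4) I would take $f \in W{S^{k,\infty}}[\R_I^\infty]$, so that for every multi-index with $|\alpha| \leq k$ the quantities $\left| \int_{\R_I^\infty} \zeta_r(x) \D^\alpha f(x)\, d\lambda_\infty(x) \right|$ are uniformly bounded in $r$, say by $M$. Since $\sum_{r=1}^{\infty} \tau_r = 1$, the estimate
\[
\sum_{r=1}^{\infty} \tau_r \left| \int_{\R_I^\infty} \zeta_r(x) \D^\alpha f(x)\, d\lambda_\infty(x) \right|^p \leq M^p \sum_{r=1}^{\infty} \tau_r = M^p < \infty
\]
shows each $\D^\alpha f \in K{S^p}[\R_I^\infty]$; summing over $|\alpha| \leq k$ yields $f \in W{S^{k,p}}[\R_I^\infty]$, which is the verbatim analogue of the $\R_I^n$ argument.

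The heart of the theorem, and the step I expect to be the main obstacle, is part (1). The intended mechanism is that weak convergence $f_n \to f$ in $W^{1,p}[\R_I^\infty]$ forces $f_n \to f$ and $\D_j f_n \to \D_j f$ weakly in $L^p[\R_I^\infty]$ for each coordinate direction $j$ (each derivative being the image of $f$ under a bounded, hence weakly continuous, linear operator); Theorem \ref{th25}(1) then upgrades each of these weakly convergent sequences to \emph{strong} convergence in $K{S^p}[\R_I^\infty]$, and summing the finitely many contributions of order at most one gives $||f_n - f||_{W{S^{1,p}}[\R_I^\infty]} \to 0$. The genuinely delicate points, which the finite-dimensional proof sidesteps, are (i) making precise the weak topology on the limit space $W^{1,p}[\R_I^\infty]$ and verifying that each coordinate derivative $f \mapsto \D_j f$ is bounded into $L^p[\R_I^\infty]$, which is exactly what lets weak convergence descend to the derivatives, and (ii) the infinitely many partial derivatives available on $\R_I^\infty$, which I would handle by keeping the order $k$ fixed as in the statement and, where needed, reducing to each $\R_I^n$ and passing to the closure via $W{S^{1,p}}[\widehat{\R_I^\infty}] = W{S^{1,p}}[\R_I^\infty]$. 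Once this weak-to-strong passage is secured, the concluding computation is the same routine implication used in the $\R_I^n$ case.
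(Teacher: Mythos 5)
Your proposal is correct in substance, and for parts (1) and (4) it is essentially the paper's own argument: (4) is the same estimate from uniform boundedness of the $\zeta_r$-integrals together with $\sum_r \tau_r = 1$, and (1) runs the same weak-to-strong mechanism (weak convergence in $W^{1,p}[\R_I^\infty]$ descends to the function and its derivatives, and Theorem \ref{th25}(1) upgrades this to strong $K{S^p}[\R_I^\infty]$-convergence), which the paper states even more tersely than you do. For parts (2) and (3), however, you take a genuinely different route. The paper does not lift its own $\R_I^n$ embedding argument to $\R_I^\infty$; instead it argues by exhaustion: $W^{k,p}[\widehat{\R_I^\infty}]=\bigcup_{n}W^{k,p}[\R_I^n]$ is asserted to be uniformly convex, dense and compactly embedded, and uniform convexity is then passed to the closure $W{S^{k,p}}[\R_I^\infty]$, with reflexivity deduced from (2). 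Your route --- the isometry $Tf=(\D^\alpha f)_{|\alpha|\le k}$ into an $\ell^p$-product of copies of $K{S^p}[\R_I^\infty]$, followed by Theorem \ref{th25}(2)--(3), closed-subspace heredity, and Milman--Pettis --- buys something real: it needs only the uniform convexity of the ambient space $K{S^p}[\R_I^\infty]$, whereas the paper's version rests on uniform convexity of $W^{k,p}[\R_I^n]$ in its \emph{own} Sobolev norm, which does not automatically yield the convexity inequality in the weaker $W{S^{k,p}}$-norm in which the closure is actually taken. One caveat in your write-up: over $\R_I^\infty$ the index set $\{\alpha : |\alpha|\le k\}$ is not finite (every coordinate direction already contributes a multi-index of order one), so ``finite $\ell^p$ product'' is not literally available; since all factors are the same space, this is repaired either by the classical fact that an $\ell^p$-sum of uniformly convex spaces with a common modulus of convexity is uniformly convex, or by your own fallback of working in each $\R_I^n$ and passing to the closure via $W{S^{k,p}}[\widehat{\R_I^\infty}]=W{S^{k,p}}[\R_I^\infty]$ --- which is exactly the device the paper itself uses.
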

   \begin{proof}
    (1) Let $\{f_n\} $ be a weakly convergence sequence in $W^{1,p}[\R_I^\infty] $ with limit $f.$ Then $$\int_{\R_I^n} \zeta_r(x)| \D^\alpha f_n(x) - \D^\alpha f(x)| d \lambda_\infty(x) \to 0 $$ for each $r$. Now, since each $f_n \in W{S^{1,p}}[\R_I^n] $, it follows $$\lim\limits_{n \to \infty}\int_{\R_I^n} \zeta_r(x)|\D^\alpha f_n(x)- \D^\alpha f(x)| d \lambda_\infty(x) \to 0.$$\\
     (2) As $W^{k,p}[\R_I^n] $ is uniformly convex for each $n $ and that is dense and compactly embedded in $W{S^{k,p}}[\R_I^\infty]$ for all $p,~1 \leq p \leq \infty.$ So, $\bigcup\limits_{n=1}^{\infty} W^{k,p}[\R_I^n] $ is uniformly convex for each $n $ and that is dense and compactly embedded in $\bigcup\limits_{n=1}^{\infty} W{S^{k,p}}[\R_I^\infty]$ for all $p,~1 \leq p \leq \infty.$\\
     However $W^{k,p}[\widehat{\R_I^\infty}]= \bigcup\limits_{n=1}^{\infty} W^{k,p}[\R_I^n]$. That is $W^{k,p}[\widehat{\R_I^\infty}] $ is uniformly convex, dense and compactly embedded in $W{S^{k,p}}[\widehat{\R_I^\infty}] $ for all $p,~1 \leq p \leq \infty$. As $W{S^{k,p}}[\R_I^\infty] $ is closure of $W{S^{k,p}}[\widehat{\R_I^\infty}]$. Therefore $W{S^{k,p}}[\R_I^\infty] $ is uniformly convex.\\
      (3) From (2) we have  $W{S^{k,p}}[\R_I^\infty]$ is reflexive for $ 1 < p < \infty.$\\
      (4) Let $ f \in W{S^{k,p}}[\R_I^\infty].$ This implies $$\left|\int_{\R_I^\infty} \zeta_r(x) \D^\alpha f(x) d \lambda_\infty (x)\right|$$ is uniformly bounded for all $r.$ It follows that $\left |\int_{\R_I^\infty} \zeta_r(x) \D^\alpha f(x) d \lambda_\infty (x)\right|^p$ is uniformly bounded for $ 1 \leq p < \infty.$ It is clear from the definition of $W{S^{k,p}}[\R_I^\infty]$ that $$\left[ \sum_{r=1}^{\infty}\tau_r\left| \int_{\R_I^\infty} \zeta_r(x) \D^\alpha f(x) d \lambda_\infty(x)\right|^p\right]^{\frac{1}{p}} \leq M||f||_{W{S^{k,p}}[\R_I^\infty]} < \infty.$$ So, $f \in W{S^{k,p}}[\R_I^\infty].$
   \end{proof}
   \begin{thm}
   $W{S^{1,p}}[\R_I^\infty] \to K{S^p}[\R_I^\infty]$ as continuous embedding for $ 1 \leq p < \infty.$
   \end{thm}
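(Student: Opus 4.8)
The plan is to deduce the infinite-dimensional embedding from the finite-dimensional one proved in the previous theorem, exploiting the fact that the embedding constant there equals $1$ and is therefore uniform in $n$. First I would note that the inclusion is well-defined at the level of sets: by the very definition of $W{S^{1,p}}[\R_I^\infty]$ as $\{f \in K{S^p}[\R_I^\infty] : \D^\alpha f \in K{S^p}[\R_I^\infty],~|\alpha| \leq 1\}$, every $f \in W{S^{1,p}}[\R_I^\infty]$ already belongs to $K{S^p}[\R_I^\infty]$, so the inclusion map $W{S^{1,p}}[\R_I^\infty] \to K{S^p}[\R_I^\infty]$ makes sense.

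For continuity, I would establish the norm inequality directly from the definition of the $W{S^{1,p}}$-norm. Since the range $|\alpha| \leq 1$ contains the zero multi-index $\alpha = 0$, for which $\D^\alpha f = f$, we have
$$\|f\|_{W{S^{1,p}}[\R_I^\infty]}^p = \sum_{|\alpha| \leq 1} \|\D^\alpha f\|_{K{S^p}[\R_I^\infty]}^p \geq \|f\|_{K{S^p}[\R_I^\infty]}^p,$$
because all the remaining derivative terms are nonnegative. Taking $p$-th roots gives $\|f\|_{K{S^p}[\R_I^\infty]} \leq \|f\|_{W{S^{1,p}}[\R_I^\infty]}$, so the inclusion is bounded with operator norm at most $1$, hence continuous.

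Finally I would handle the limit construction. Recall that $W{S^{1,p}}[\R_I^\infty]$ is the closure of $\bigcup_{n=1}^\infty W{S^{1,p}}[\R_I^n]$ and $K{S^p}[\R_I^\infty]$ the closure of $\bigcup_{n=1}^\infty K{S^p}[\R_I^n]$. Since each finite-level embedding $W{S^{1,p}}[\R_I^n] \to K{S^p}[\R_I^n]$ carries the same bound with constant $1$, the inclusion defined on the dense subspace $W{S^{1,p}}[\widehat{\R_I^\infty}]$ extends by continuity to the whole closure: a $W{S^{1,p}}$-Cauchy sequence $\{f_n\}$ is sent to a $K{S^p}$-Cauchy sequence, and the consistency of the limits follows from the identities $W{S^{1,p}}[\widehat{\R_I^\infty}] = W{S^{1,p}}[\R_I^\infty]$ and $K{S^p}[\widehat{\R_I^\infty}] = K{S^p}[\R_I^\infty]$ established earlier.

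The main obstacle is not any hard estimate, since the norm inequality is immediate, but rather the bookkeeping of this passage to the closure: one must verify that the extension is independent of the approximating Cauchy sequence chosen, and that the $K{S^p}$-limit of the images coincides with the image of the $W{S^{1,p}}$-limit. The uniform constant $1$, together with the two closure identities above, is precisely what reduces this step to a routine continuity argument rather than a delicate one.
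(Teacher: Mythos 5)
Your proposal is correct and follows essentially the same route as the paper: the paper likewise deduces the result from the finite-level embeddings $W{S^{1,p}}[\R_I^n] \to K{S^p}[\R_I^n]$, takes the union over $n$ to get $W{S^{1,p}}[\widehat{\R_I^\infty}] \to K{S^p}[\widehat{\R_I^\infty}]$, and then passes to the closures. Your explicit norm inequality $\|f\|_{K{S^p}[\R_I^\infty]} \leq \|f\|_{W{S^{1,p}}[\R_I^\infty]}$ coming from the $\alpha=0$ term, and your bookkeeping of the uniform constant $1$ in the extension-by-continuity step, simply make precise what the paper leaves implicit.
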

   \begin{proof}
   As $W{S^{1,p}}[\R_I^n] \longrightarrow K{S^p}[\R_I^n]$ as continuous embedding for $ 1 \leq p < \infty.$ So, $\bigcup\limits_{n=1}^{\infty}W{S^{1,p}}[\R_I^n] \longrightarrow  \bigcup\limits_{n=1}^{\infty}K{S^p}[\R_I^n]$ as continuous embedding for $ 1 \leq p < \infty$. Therefore $W{S^{1,p}}[\widehat{\R_I^\infty}] \longrightarrow K{S^p}[\widehat{\R_I^\infty}] $ for $ 1 \leq p < \infty.$ 
   Hence, $W{S^{1,p}}[\R_I^\infty] \longrightarrow K{S^p}[\R_I^\infty]$ as continuous embedding for $ 1 \leq p < \infty.$
   \end{proof}
   \section{HK-Sobolev  spaces through Bessel Potential}
   \begin{Def} \cite{WM}
   \begin{enumerate}
   \item $\mcS[\R_I^n]= \{f \in C_c^\infty[\R_I^n]~~: \sup|x^\beta \partial^\alpha f | < \infty \} $ for all multi-indices $\alpha, \beta.$
   \item $\mcS^{*}[\R_I^n] $ is the set of sequentially continuous functionals on the space $\mcS[\R_I^n].$
    \end{enumerate}
   \end{Def}
   For $ s \in \R,$ Bessel potential of order $s $ to be the sequentially continuous bijective linear operator $\mcP^s: \mcS[\R_I^n] \to \mcS[\R_I^n]$ by $$\mcP^s f(x)= \int_{\R_I^n}( 1 +|\xi|^2)^{\frac{s}{2}}\widehat{f}(\xi) e^{i 2\pi \xi.x}d \xi$$ for $ x \in \R_I^n.$ That is $$\mcF_{x \to \xi}\{\mcP^s f(x)\}=( 1 +|\xi|^2)^{\frac{s}{2}}\widehat{f}(\xi).$$
  \begin{Def}\cite{WM}
  For $ s \in \R,$  the Sobolev space through Bessel potential is $${W_{\mcB}^{s,p}}[\R_I^n]= \{ f \in \mcS^{*}[\R_I^n]:~~\mcP^s f \in L^2[\R_I^n] \},$$ where
  $\mcP^s $ is a kind of differential operator of order $s.$
  \end{Def}
    Recalling that $\mcP^{s+t}= \mcP^s.\mcP^t,~(\mcP^s)^{-1}= \mcP^{-s},~\mcP^0= $ Identity operator. It is found that $<\mcP^s f, g>=<f, \mcP^s g>_{K{S^2}} $ and $<\mcP^s f, g> = < f, \mcP^s g>_{K{S^2}}$ for all $f, g \in \mcS[\R_I^n],$ giving a natural extension of Bessel potential to a linear operator $\mcP^s: \mcS^{*} \to \mcS^{*}$ by $<\mcP^s f, \varphi>= < f, \mcP^s \varphi> $ for all $ \varphi \in \mcS[\R_I^n].$\\
     We define for $ s \in \R,$ $$W{S_{\mcB}^{s,p}}[\R_I^n]= \{ f \in \mcS^{*}[\R_I^n]:~\mcP^s f \in K{S^2}[\R_I^n] \}.$$
   \begin{thm}
   Let $h \in \mcS^{*}[\R_I^n].$ Then $h \in A$ there exists $ g \in A $ such that $$< h, \varphi>= < g, \varphi>_{K{S^2}[\R_I^n]} \mbox{~for~ all~}\varphi \in \mcS[\R_I^n].$$
   \end{thm}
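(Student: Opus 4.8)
I read the displayed statement as a biconditional characterizing membership in the Hilbert space $A = K{S^2}[\R_I^n]$: a distribution $h \in \mcS^{*}[\R_I^n]$ is identified with an element of $A$ if and only if its action on test functions is realized by the $K{S^2}$ inner product against some fixed $g \in A$. Since $A$ carries the Hilbert structure $\langle\cdot,\cdot\rangle_{K{S^2}}$ built above from the functionals $F_k(f)=\int_{\R_I^n}\zeta_r(x)f(x)\,d\lambda_\infty(x)$, the natural tool is the Riesz representation theorem, and the whole argument reduces to transferring the distributional pairing to the Hilbert inner product.

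The plan is to first record the analytic input that makes everything work: $\mcS[\R_I^n]$ embeds continuously and densely into $K{S^2}[\R_I^n]$. Indeed $\mcS[\R_I^n]\subset L^2[\R_I^n]$, and $L^2[\R_I^n]\hookrightarrow K{S^2}[\R_I^n]$ is a continuous dense embedding by the embedding theorem for $K{S^2}$ proved above (the case $q=2$ of $K{S^2}\supset L^q$); density of $\mcS$ in $L^2$ then yields density of $\mcS$ in $K{S^2}$. Consequently, for any $g\in A$ the map $\varphi\mapsto\langle g,\varphi\rangle_{K{S^2}}$ obeys $|\langle g,\varphi\rangle_{K{S^2}}|\le\|g\|_{K{S^2}}\,\|\varphi\|_{K{S^2}}$ by Cauchy--Schwarz, so it is a bounded linear functional for the $K{S^2}$ norm, in particular an element of $\mcS^{*}[\R_I^n]$.

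For the sufficiency direction I would assume such a $g\in A$ exists with $\langle h,\varphi\rangle=\langle g,\varphi\rangle_{K{S^2}}$ on $\mcS[\R_I^n]$; by the previous paragraph this identity exhibits $h$ as the continuous functional carried by $g$, hence $h$ lies in (the image of) $A$. For the necessity direction I would start from $h\in A$ regarded as a genuine vector of $K{S^2}[\R_I^n]$. The restriction of $\varphi\mapsto\langle h,\varphi\rangle$ to $\mcS[\R_I^n]$ is bounded for $\|\cdot\|_{K{S^2}}$, so by the density established above it extends uniquely to a bounded linear functional on all of $K{S^2}[\R_I^n]$; the Riesz representation theorem for this Hilbert space then produces a unique $g\in K{S^2}[\R_I^n]=A$ with $\langle h,\varphi\rangle=\langle g,\varphi\rangle_{K{S^2}}$ for every $\varphi$, which is exactly the asserted representation.

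The main obstacle is not the Hilbert-space abstraction but the verification of the continuous dense inclusion $\mcS[\R_I^n]\hookrightarrow K{S^2}[\R_I^n]$ and, with it, the uniform estimate $|\langle h,\varphi\rangle|\le\|h\|_{K{S^2}}\,\|\varphi\|_{K{S^2}}$; this is what legitimizes the extension-by-density step and the application of Riesz. Once that estimate is secured from the construction of the $K{S^2}$ inner product and the established density of $C_0^\infty[\R_I^n]$, both implications close immediately.
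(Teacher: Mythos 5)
Your route is genuinely different from the paper's, and it has a gap. The paper does not use Riesz representation at all: it quotes from \cite{WM} the $L^2$ version of the statement (namely $\langle h,\varphi\rangle=\langle g,\varphi\rangle_{L^2[\R_I^n]}$ for some $g\in A$) and then uses the estimate $\|f\|_{K{S^2}[\R_I^n]}\le\|f\|_{L^2[\R_I^n]}$, i.e.\ the embedding $L^2[\R_I^n]\subset K{S^2}[\R_I^n]$, to conclude that the representative $g$ also lives in $K{S^2}[\R_I^n]$. Your proposal replaces this by an internal density-plus-Riesz argument, and the gap sits exactly at the step you yourself flag as the ``main obstacle'': the claim that for $h\in A$ the functional $\varphi\mapsto\langle h,\varphi\rangle$ satisfies $|\langle h,\varphi\rangle|\le\|h\|_{K{S^2}}\|\varphi\|_{K{S^2}}$ on $\mcS[\R_I^n]$. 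Cauchy--Schwarz gives this only if $\langle h,\varphi\rangle$ already denotes the $K{S^2}$ inner product, in which case the theorem is vacuous (take $g=h$) and the Riesz machinery does no work. If instead $\langle h,\varphi\rangle$ is the $\mcS^{*}$--$\mcS$ pairing, then the bound is a \emph{strictly stronger} requirement than $L^2$-boundedness, because $\|\varphi\|_{K{S^2}}\le\|\varphi\|_{L^2}$: control by the weaker norm implies control by the stronger one, not conversely. Nothing in the hypothesis $h\in A$ supplies this control, and density of $\mcS$ in $K{S^2}$ cannot create it --- density only lets you extend a functional that is \emph{already} $K{S^2}$-bounded.

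In fact the bound is false in general. If $\varphi\mapsto\langle h,\varphi\rangle$ were $K{S^2}$-bounded, Riesz would produce $g\in K{S^2}[\R_I^n]$ with $\langle h,\varphi\rangle=\sum_{r}\tau_r \overline{c_r}\int_{\R_I^n}\zeta_r\varphi\, d\lambda_\infty$ where $c_r=\int_{\R_I^n}\zeta_r g\, d\lambda_\infty$ (up to conjugation conventions), i.e.\ $h$ would coincide a.e.\ with the function $w=\sum_r \tau_r\overline{c_r}\,\zeta_r$; but $|w(x)|\le\sum_r\tau_r|c_r|\le\left(\sum_r\tau_r\right)^{1/2}\left(\sum_r\tau_r|c_r|^2\right)^{1/2}=\|g\|_{K{S^2}}$, so $w$ is essentially bounded. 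Hence any unbounded $h\in L^2[\R_I^n]$ (for instance $h(x)=|x|^{-n/3}$ near the origin, compactly supported), which lies in $K{S^2}[\R_I^n]$ and in $A$, has a pairing functional that is \emph{not} $K{S^2}$-bounded, and your necessity direction breaks down for it. To close the argument you must abandon the Riesz route for that direction and, as the paper does, pass through the known $L^2$ representation from \cite{WM} together with the embedding $L^2[\R_I^n]\hookrightarrow K{S^2}[\R_I^n]$.
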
 
   \begin{proof}
    From \cite{WM}, for  $h \in \mcS^{*}[\R_I^n],$ then $h \in A$ there exists $ g \in A $ such that $$< h, \varphi>= < g, \varphi>_{L^2[\R_I^n]}  \mbox{~for~all~} \varphi \in \mcS[\R_I^n].$$
    Let $ f \in L^2[\R_I^n]$. Then
    \begin{align*}
    ||f||_{K{S^2}[\R_I^n]} &= \left[ \sum_{r=1}^{\infty}\tau_r\left|\int_{\R_I^n} \zeta_r(x)f(x) d \lambda_\infty(x)\right|^2\right]^{\frac{1}{2}}\\& \leq \left[ \sum_{r=1}^{\infty} \tau_r \int_{\R_I^n} \zeta_r(x)|f(x)|^2 d \lambda_\infty(x)\right]^{\frac{1}{2}}\\&\leq \sup\left[\int_{\R_I^n} \zeta_r(x)|f(x)|^2 d \lambda_\infty(x)\right]^{\frac{1}{2}}\\&\leq ||f||_{L^2}.
    \end{align*}
    Therefore $ f \in K{S^2}[\R_I^n]$. Hence we can conclude $$< h, \varphi>= < g, \varphi>_{K{S^2}[\R_I^n]} \mbox{~for~ all~}\varphi \in \mcS[\R_I^n].$$
   \end{proof}
     $W{S_{\mcB}^{s,p}}[\R_I^n]$ equipped with the inner product $$<f,g>_{W{S_{\mcB}^{s,p}}[\R_I^n]}=< \mcP^s f, \mcP^s g>_{K{S^2}[\R_I^n]}$$ and the norm induced by $$< f,f>_{W{S_{\mcB}^{s,p}}[\R_I^n]}= ||\mcP^s f||_{K{S^2}[\R_I^n]}.$$ The Bessel Potential $\mcP^s:W{S_{\mcB}^{s,p}}[\R_I^n] \to K{S^2}[\R_I^n] $ is a unitary isomorphism and in particular $\mcP^0 f= f$ so, $W{S_{\mcB}^{0,p}}[\R_I^n] = K{S^2}[\R_I^n].$
    \begin{rem}
    $W{S_{\mcB}^{s,p}}[\R_I^n]$ is separable Hilbert space.
    \end{rem}
    \begin{thm}
    $W{S_{\mcB}^{s,p}}[\R_I^n] $ contains $W_{\mcB}^{s,p}[\R_I^n]$ as continuous dense embedding.
    \end{thm}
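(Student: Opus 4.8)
The plan is to exploit two facts already in place: that the Bessel potential $\mcP^s$ realizes each of these spaces as an isometric copy of a ``plain'' space through the norm identity $\|f\|_{W{S_{\mcB}^{s,p}}[\R_I^n]} = \|\mcP^s f\|_{K{S^2}[\R_I^n]}$ (and, by the analogous definition taken from \cite{WM}, $\|f\|_{W_{\mcB}^{s,p}[\R_I^n]} = \|\mcP^s f\|_{L^2[\R_I^n]}$), and that $L^2[\R_I^n]$ sits continuously and densely inside $K{S^2}[\R_I^n]$. The latter is exactly the content of Theorem~\ref{th21} at the level $n$, together with the norm estimate $\|g\|_{K{S^2}[\R_I^n]} \le \|g\|_{L^2[\R_I^n]}$ established in the preceding theorem. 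Thus the whole statement should reduce, through the intertwining operator $\mcP^s$, to the already-known embedding $L^2[\R_I^n] \subset K{S^2}[\R_I^n]$.

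First I would settle the set inclusion and the continuity at the same time. Let $f \in W_{\mcB}^{s,p}[\R_I^n]$, so that $\mcP^s f \in L^2[\R_I^n]$. Since $L^2[\R_I^n] \subset K{S^2}[\R_I^n]$, we get $\mcP^s f \in K{S^2}[\R_I^n]$, i.e.\ $f \in W{S_{\mcB}^{s,p}}[\R_I^n]$. Quantitatively, applying the norm inequality $\|\cdot\|_{K{S^2}} \le \|\cdot\|_{L^2}$ to $g = \mcP^s f$ yields
\[
\|f\|_{W{S_{\mcB}^{s,p}}[\R_I^n]} = \|\mcP^s f\|_{K{S^2}[\R_I^n]} \le \|\mcP^s f\|_{L^2[\R_I^n]} = \|f\|_{W_{\mcB}^{s,p}[\R_I^n]},
\]
so the inclusion map is norm-nonincreasing, hence a continuous embedding.

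For density I would transport the density of $L^2$ in $K{S^2}$ back through $\mcP^{-s} = (\mcP^s)^{-1}$. Fix $f \in W{S_{\mcB}^{s,p}}[\R_I^n]$; then $\mcP^s f \in K{S^2}[\R_I^n]$, and by Theorem~\ref{th21} there is a sequence $\{g_k\} \subset L^2[\R_I^n]$ with $g_k \to \mcP^s f$ in $K{S^2}[\R_I^n]$. Put $f_k = \mcP^{-s} g_k$. Because $\mcP^s f_k = g_k \in L^2[\R_I^n]$, each $f_k$ lies in $W_{\mcB}^{s,p}[\R_I^n]$, and
\[
\|f_k - f\|_{W{S_{\mcB}^{s,p}}[\R_I^n]} = \|\mcP^s(f_k - f)\|_{K{S^2}[\R_I^n]} = \|g_k - \mcP^s f\|_{K{S^2}[\R_I^n]} \longrightarrow 0.
\]
Hence $W_{\mcB}^{s,p}[\R_I^n]$ is dense in $W{S_{\mcB}^{s,p}}[\R_I^n]$, completing the argument.

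The step that needs genuine care, rather than bookkeeping, is the justification that $\mcP^s$ really is a bijective isometry already at the level of the dual space $\mcS^*[\R_I^n]$ — that is, that $\mcP^{-s}$ is well defined on all of $K{S^2}[\R_I^n]$ and sends the $L^2$-approximants $g_k$ to genuine elements of $W_{\mcB}^{s,p}[\R_I^n]$, so that the sequence $\{f_k\}$ above is legitimate. This rests on the operator identities $\mcP^{s+t} = \mcP^s\,\mcP^t$, $(\mcP^s)^{-1} = \mcP^{-s}$ and on the structural statement, recorded just before the definition of $W{S_{\mcB}^{s,p}}$, that $\mcP^s$ extends to a unitary isomorphism onto $K{S^2}[\R_I^n]$; once that fact is invoked, the reduction to the embedding $L^2[\R_I^n] \subset K{S^2}[\R_I^n]$ is routine.
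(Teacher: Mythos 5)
Your proposal is correct, and its first half coincides exactly with the paper's own proof: the paper establishes the theorem by the single chain
\[
\|f\|_{W{S_{\mcB}^{s,p}}[\R_I^n]} = \|\mcP^s f\|_{K{S^2}[\R_I^n]}
\le \Bigl[\int_{\R_I^n}|(\mcP^s f)(x)|^2\, d\lambda_\infty(x)\Bigr]^{\frac{1}{2}}
\le \|f\|_{W_{\mcB}^{s,p}[\R_I^n]},
\]
i.e.\ it reduces everything, through the intertwining identity $\|\cdot\|_{W{S_{\mcB}^{s,p}}} = \|\mcP^s(\cdot)\|_{K{S^2}}$, to the inequality $\|g\|_{K{S^2}} \le \|g\|_{L^2}$ proved in the theorem immediately preceding it, and then stops with ``Hence the result.'' Where you genuinely go beyond the paper is the density claim: the paper's proof never addresses density at all, whereas you transport an approximating sequence $g_k \to \mcP^s f$ in $K{S^2}[\R_I^n]$ (supplied by the dense inclusion $L^2[\R_I^n] \subset K{S^2}[\R_I^n]$, the level-$n$ form of Theorem~\ref{th21}) back through $\mcP^{-s}$, using $\mcP^s\mcP^{-s} = \mcP^0 = \mathrm{id}$ and the fact that $\mcP^{\pm s}$ acts on all of $\mcS^{*}[\R_I^n]$, so that $f_k = \mcP^{-s}g_k$ lies in $W_{\mcB}^{s,p}[\R_I^n]$ and $\|f_k - f\|_{W{S_{\mcB}^{s,p}}} = \|g_k - \mcP^s f\|_{K{S^2}} \to 0$. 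This buys a complete proof of the statement as announced (continuous \emph{and} dense embedding), at the modest cost of invoking the invertibility of the Bessel potential on $\mcS^{*}[\R_I^n]$, which the paper records just before defining $W{S_{\mcB}^{s,p}}$; the paper's argument, read literally, establishes only the continuity of the inclusion. One small remark in your favor: you correctly open the continuity computation with $f \in W_{\mcB}^{s,p}[\R_I^n]$, whereas the paper's proof begins ``Let $f \in W{S_{\mcB}^{s,p}}[\R_I^n]$,'' which is evidently a slip, since the function being estimated must come from the subspace being embedded.
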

    \begin{proof}
    Let $ f \in W{S_{\mcB}^{s,p}}[\R_I^n].$ Then
    \begin{align*}
    ||f||_{W{S_{\mcB}^{s,p}}[\R_I^n]} &=||\mcP^s f||_{K{S^2}[\R_I^n]}\\&=\left[ \sum_{r=1}^{\infty}\tau_r \left|\int_{\R_I^n}\zeta_r(x)(\mcP^s f)(x)d \lambda_\infty(x)\right|^2\right]^{\frac{1}{2}}\\&\leq \left[\int_{\R_I^n}|(\mcP^s f)(x)|^2 d \lambda_\infty(x)\right]^{\frac{1}{2}}\\&\leq ||f||_{W_{\mcB}^{s,p}[\R_I^n]}.
    \end{align*}
    Hence the result.
    \end{proof}
 \begin{thm}\label{th46}
	\begin{enumerate}
		\item If $f_n \to f $ weakly in $W^{^{s,p}}_{\mcB}[\R_I^n]$ then $f_n \to f $ strongly in  $W{S_{\mcB}^{s,p}}[\R_I^n] $.
		\item If $ 1 < p < \infty$ then $W{S_{\mcB}^{s,p}}[\R_I^n]$ is uniformly convex.
		\item If $ 1 < p < \infty$ then $W{S_{\mcB}^{s,p}}[\R_I^n]$ is reflexive.
	\end{enumerate}
	
\end{thm}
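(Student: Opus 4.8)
The plan is to reduce all three statements to known properties of the Kuelbs-Steadman spaces by exploiting a single structural fact: by the very definition of its norm, the Bessel potential $\mcP^s$ is a surjective linear isometry from $W{S_{\mcB}^{s,p}}[\R_I^n]$ onto the Kuelbs-Steadman space, namely $\|f\|_{W{S_{\mcB}^{s,p}}[\R_I^n]}=\|\mcP^s f\|_{K{S^2}[\R_I^n]}$, with $\mcP^s$ bijective and $(\mcP^s)^{-1}=\mcP^{-s}$. Each assertion is then obtained by transporting the corresponding property of the target space across this isometry, in the same spirit as the proofs of Theorem \ref{th25} and Theorem \ref{th311}.

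For part (1) I would argue as follows. Assume $f_n \to f$ weakly in $W_{\mcB}^{s,p}[\R_I^n]$. Since $\mcP^s$ is bounded and linear, it is continuous for the weak topologies, so $\mcP^s f_n \to \mcP^s f$ weakly in $L^2[\R_I^n]$. By the weak-to-strong collapse in Kuelbs-Steadman spaces (the $\R_I^n$-analogue of Theorem \ref{th25}(1)), this weak convergence upgrades to strong convergence $\mcP^s f_n \to \mcP^s f$ in $K{S^2}[\R_I^n]$. Because the norm on $W{S_{\mcB}^{s,p}}[\R_I^n]$ is exactly $\|\mcP^s\,\cdot\,\|_{K{S^2}[\R_I^n]}$, this is precisely the statement that $f_n \to f$ strongly in $W{S_{\mcB}^{s,p}}[\R_I^n]$.

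Parts (2) and (3) are formal transfers along the isometry $\mcP^s$. Both uniform convexity and reflexivity are preserved under a surjective isometric isomorphism, so $W{S_{\mcB}^{s,p}}[\R_I^n]$ inherits whichever of these its isometric image enjoys: the target being $K{S^2}[\R_I^n]$, a Hilbert space and hence automatically uniformly convex and reflexive, or, under the reading in which $\mcP^s f$ is required to lie in $K{S^p}[\R_I^n]$, the space $K{S^p}[\R_I^n]$, which is uniformly convex for $1<p<\infty$ by the $\R_I^n$-version of Theorem \ref{th25}(2). Reflexivity in part (3) then follows either directly from that transfer or from the Milman--Pettis theorem applied to part (2).

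The only place requiring genuine care is part (1): one must verify that $\mcP^s$ carries the given weak limit to a weak limit in the target Lebesgue-type space, and that the weak-to-strong phenomenon for Kuelbs-Steadman spaces is available at the relevant exponent. Once that is secured, parts (2) and (3) demand nothing beyond the invariance of uniform convexity and reflexivity under isometry, so I expect no substantive obstacle there.
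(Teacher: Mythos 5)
Your proposal is correct (modulo facts the paper itself asserts) and it takes a genuinely cleaner route than the paper does. The paper's own proof is a one-line deferral to the technique of Theorem \ref{th311}: there, part (1) is argued componentwise (weak convergence forces each functional $f \mapsto \int \zeta_r f \, d\lambda_\infty$ to converge, which is what strong $KS$-convergence means), part (2) by a density/closure argument (or, in the unlabelled $\R_I^n$ analogue, by the isometric embedding $f \mapsto (\D^\alpha f)_{|\alpha| \le k}$ into $K{S^p}$ as a closed subspace), and part (3) from part (2). You instead transport all three statements along the single unitary isomorphism $\mcP^s : W{S_{\mcB}^{s,p}}[\R_I^n] \to K{S^2}[\R_I^n]$ that the paper establishes immediately before the theorem. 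What this buys: part (1) reduces to weak-to-weak continuity of the bounded operator $\mcP^s$ plus the weak-to-strong collapse in Kuelbs-Steadman spaces (the $\R_I^n$ analogue of Theorem \ref{th25}(1), which the paper uses freely); parts (2) and (3) become immediate because the target $K{S^2}$ is a Hilbert space, so you never need uniform convexity of $K{S^p}$ for general $p$, nor the closure argument of Theorem \ref{th311}(2), which is the most fragile link in the paper's own chain (uniform convexity does not obviously pass to closures of densely embedded subspaces in a new norm). The one hypothesis you lean on more heavily than the paper is surjectivity of $\mcP^s$ onto $K{S^2}$; the paper asserts this (``unitary isomorphism''), but it tacitly requires $K{S^2}[\R_I^n] \subset \mcS^{*}[\R_I^n]$ so that $\mcP^{-s}$ can be applied. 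If one only trusts that $\mcP^s$ is an isometry onto a closed subspace, your parts (2) and (3) survive verbatim, since closed subspaces of Hilbert spaces are again Hilbert. You also correctly flag, and cover both readings of, the paper's $p$-versus-$2$ ambiguity in the definition of $W{S_{\mcB}^{s,p}}$.
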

\begin{proof}
	Proof are similar technique as Theorem \ref{th311}.
\end{proof}
\begin{thm}
\begin{enumerate}
\item $\D[\R_I^n] $ is dense in $W{S_{\mcB}^{s,p}}[\R_I^n].$
\item $\mcS[\R_I^n]$ is dense in $W{S_{\mcB}^{s,p}}[\R_I^n]$
\item If $s \leq t$ then $W{S_{\mcB}^{t,p}}[\R_I^n] \subset W{S_{\mcB}^{s,p}}[\R_I^n]$ and $||f||_{W{S_{\mcB}^{s,p}}} \leq ||f||_{W{S_{\mcB}^{t,p}}}.$
\end{enumerate}
\end{thm}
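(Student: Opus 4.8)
The three claims split into two families. Parts (1) and (2) are density statements, and I would obtain them for free from the \emph{transitivity of density along a continuous embedding}, combining the classical density results for the Bessel potential space with the continuous dense embedding established in the preceding theorem. Part (3) is an inclusion-plus-monotonicity statement, and I would reduce it, via the semigroup law $\mcP^{s+t}=\mcP^{s}\mcP^{t}$, to the contractivity of a single Fourier multiplier on $K{S^2}[\R_I^n]$.

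\textbf{Parts (1) and (2).} By the classical theory of Bessel potential spaces (see \cite{WM}), both $\mcD[\R_I^n]$ and $\mcS[\R_I^n]$ are dense in $W_{\mcB}^{s,p}[\R_I^n]$. The preceding theorem states that $W_{\mcB}^{s,p}[\R_I^n]$ is continuously and densely embedded in $W{S_{\mcB}^{s,p}}[\R_I^n]$. I would then invoke the elementary fact that if $A$ is dense in $B$ and $\iota\colon B\hookrightarrow C$ is a continuous embedding with dense range, then $A$ is dense in $C$: indeed $\iota(B)=\iota(\overline{A}^{B})\subseteq\overline{A}^{C}$, so $C=\overline{\iota(B)}^{C}\subseteq\overline{A}^{C}$. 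Taking $A\in\{\mcD[\R_I^n],\mcS[\R_I^n]\}$, $B=W_{\mcB}^{s,p}[\R_I^n]$ and $C=W{S_{\mcB}^{s,p}}[\R_I^n]$ settles (1) and (2) simultaneously. As a self-contained alternative one may use that $\mcP^{s}\colon W{S_{\mcB}^{s,p}}[\R_I^n]\to K{S^2}[\R_I^n]$ is a unitary isomorphism and that $\mcP^{s}$ maps $\mcS[\R_I^n]$ bijectively onto itself, so that density of $\mcS[\R_I^n]$ in $W{S_{\mcB}^{s,p}}[\R_I^n]$ is equivalent to density of $\mcS[\R_I^n]$ in $K{S^2}[\R_I^n]$, which holds since $C_0^\infty[\R_I^n]$ is dense in $K{S^2}[\R_I^n]$ by an earlier theorem; for $\mcD[\R_I^n]$ one then approximates a Schwartz function $\varphi$ by the cutoffs $\eta(\cdot/R)\,\varphi$ with $\eta\in\mcD[\R_I^n]$.

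\textbf{Part (3).} Fix $s\le t$. Writing $\mcP^{s}=\mcP^{s-t}\mcP^{t}$, for $f\in W{S_{\mcB}^{t,p}}[\R_I^n]$ I would set $g=\mcP^{t}f\in K{S^2}[\R_I^n]$, so that $\mcP^{s}f=\mcP^{s-t}g$. Both the inclusion $W{S_{\mcB}^{t,p}}[\R_I^n]\subset W{S_{\mcB}^{s,p}}[\R_I^n]$ and the bound $\|f\|_{W{S_{\mcB}^{s,p}}}=\|\mcP^{s-t}g\|_{K{S^2}}\le\|g\|_{K{S^2}}=\|f\|_{W{S_{\mcB}^{t,p}}}$ then follow at once, provided I show that $\mcP^{s-t}$ is a contraction on $K{S^2}[\R_I^n]$. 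Since $s-t\le 0$, the corresponding Fourier multiplier satisfies $0<(1+|\xi|^{2})^{(s-t)/2}\le 1$, whence Plancherel gives $\|\mcP^{s-t}h\|_{L^2}\le\|h\|_{L^2}$ for all $h$. To pass to the $K{S^2}$-norm $\|u\|_{K{S^2}}^{2}=\sum_{r}\tau_{r}\,|\langle\zeta_{r},u\rangle|^{2}$ I would use the self-adjointness $\langle\zeta_{r},\mcP^{s-t}u\rangle=\langle\mcP^{s-t}\zeta_{r},u\rangle$ of the multiplier in the $L^2$ pairing, together with the bound $\|\cdot\|_{K{S^2}}\le\|\cdot\|_{L^2}$ and density of $\mcS[\R_I^n]$ to reduce to smooth $u$.

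\textbf{Where the difficulty lies.} The genuinely delicate point is this last step of part (3): contractivity of a Fourier multiplier on $L^2[\R_I^n]$ does not transfer automatically to the $K{S^2}$-norm, because that norm is assembled from the functionals $F_{r}(u)=\langle\zeta_{r},u\rangle$ attached to the fixed cubes $\mcB_{r}$, and applying $\mcP^{s-t}$ replaces the characteristic function $\zeta_{r}$ by the smooth, non-characteristic function $\mcP^{s-t}\zeta_{r}$, destroying the term-by-term structure of the sum. The estimate I expect to have to establish is therefore $\sum_{r}\tau_{r}\,|\langle\mcP^{s-t}\zeta_{r},u\rangle|^{2}\le\sum_{r}\tau_{r}\,|\langle\zeta_{r},u\rangle|^{2}$ for all $u$, and this is the inequality on which I would concentrate the effort.
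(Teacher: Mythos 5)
Your parts (1) and (2) are correct and are essentially the paper's own argument: the paper proves (1) by precisely the transitivity you describe ($\mcD[\R_I^n]$ is dense in $W_{\mcB}^{s,p}[\R_I^n]$, which is dense in $W{S_{\mcB}^{s,p}}[\R_I^n]$ by the preceding embedding theorem) and then states that (2) follows in the same way; your explicit justification of the transitivity step, and your alternative route through the unitary isomorphism $\mcP^s$, are sound refinements of this.

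Part (3) is where your proposal has a genuine gap, and you have located it yourself. Your reduction via $\mcP^{s}f=\mcP^{s-t}\bigl(\mcP^{t}f\bigr)$ is correct, and it shows that both the inclusion and the norm inequality are equivalent to the contractivity of $\mcP^{s-t}$ on $K{S^2}[\R_I^n]$, i.e.\ to
\[
\sum_{r=1}^{\infty}\tau_r\left|\int_{\R_I^n}\zeta_r(x)\,(\mcP^{s-t}u)(x)\,d\lambda_\infty(x)\right|^2\le\sum_{r=1}^{\infty}\tau_r\left|\int_{\R_I^n}\zeta_r(x)\,u(x)\,d\lambda_\infty(x)\right|^2 .
\]
This inequality is never established, and it is not a routine verification: the $K{S^2}$ norm is assembled from the fixed functionals $u\mapsto\int_{\mcB_r}u$, and a Fourier multiplier that is a contraction on $L^2$ need not act contractively on this family, since $\mcP^{s-t}$ smears mass across cubes, so $|\langle\zeta_r,\mcP^{s-t}u\rangle|$ can pick up contributions at index $r$ that the right-hand side registers only at other indices; it is not even clear that the inequality is true as stated. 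Note also that the difficulty cannot be bypassed the way the paper suggests: the paper's proof of (3) is the single remark that one can ``follow the similar arguments like part (1)'', but (1) is a density statement while (3) is a norm inequality between the two $WS$ norms, and the natural chain $\|\mcP^s f\|_{K{S^2}}\le\|\mcP^s f\|_{L^2}\le\|\mcP^t f\|_{L^2}$ only bounds $\|f\|_{W{S_{\mcB}^{s,p}}}$ by the classical norm $\|f\|_{W_{\mcB}^{t,p}}$, not by $\|f\|_{W{S_{\mcB}^{t,p}}}$, and density of $W_{\mcB}^{t,p}[\R_I^n]$ in $W{S_{\mcB}^{t,p}}[\R_I^n]$ cannot upgrade that bound. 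So your formulation is the honest statement of what must be proved, but as it stands part (3) remains unproven in your proposal.
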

\begin{proof}
 (1) $\D[\R_I^n] $ is dense in $W_{\mcB}^{s,p}[\R_I^n]$ and $W_{\mcB}^{s,p}[\R_I^n]$ is dense in  $W{S_{\mcB}^{s,p}}[\R_I^n].$ Hence $\D[\R_I^n] $ is dense in $W{S_{\mcB}^{s,p}}[\R_I^n].$\\
For (2) and (3) we can follow the similar arguments like part (1).
\end{proof}
\begin{Def} For any closed set $ F \subset \R_I^n,$ the associated HK-Sobolev space of order $s$, denoted by $B^{s,F}$ is defined by $$B^{s,F}=\{f \in W{S_{\mcB}^{s,p}}[\R_I^n]~: supp f \subset F\}.$$
\end{Def}
\begin{thm}\label{th49}
	$B^{s,F}$ is a closed subspace of $W{S_{\mcB}^{s,p}}[\R_I^n]$.
\end{thm}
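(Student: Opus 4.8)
The plan is to verify first that $B^{s,F}$ is a linear subspace and then that it is norm-closed in $W{S_{\mcB}^{s,p}}[\R_I^n]$. The subspace property is immediate: if $f,g \in B^{s,F}$ and $a,b \in \R$, then $af+bg \in W{S_{\mcB}^{s,p}}[\R_I^n]$ because the latter is a vector space, while $\mbox{supp}(af+bg) \subseteq \mbox{supp}(f) \cup \mbox{supp}(g) \subseteq F$, so $af+bg \in B^{s,F}$. The whole content of the theorem therefore lies in the closedness.

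For closedness, the decisive observation is that convergence in the $W{S_{\mcB}^{s,p}}[\R_I^n]$-norm forces convergence in the distributional sense in $\mcS^{*}[\R_I^n]$. To see this, I would fix $\varphi \in \mcS[\R_I^n]$ and use the self-adjointness of the Bessel potential together with $(\mcP^s)^{-1}=\mcP^{-s}$ and Cauchy-Schwarz in the Hilbert space $K{S^2}[\R_I^n]$ to write
$$ |<f,\varphi>| = |<\mcP^s f, \mcP^{-s}\varphi>_{K{S^2}[\R_I^n]}| \leq ||\mcP^s f||_{K{S^2}[\R_I^n]}\, ||\mcP^{-s}\varphi||_{K{S^2}[\R_I^n]} = ||f||_{W{S_{\mcB}^{s,p}}[\R_I^n]}\, ||\mcP^{-s}\varphi||_{K{S^2}[\R_I^n]}. $$
Since $\mcP^{-s}\varphi \in \mcS[\R_I^n] \subset K{S^2}[\R_I^n]$, the factor $||\mcP^{-s}\varphi||_{K{S^2}[\R_I^n]}$ is finite, so for each fixed $\varphi$ the map $f \mapsto <f,\varphi>$ is a bounded linear functional on $W{S_{\mcB}^{s,p}}[\R_I^n]$.

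Now suppose $\{f_m\} \subset B^{s,F}$ converges to $f$ in $W{S_{\mcB}^{s,p}}[\R_I^n]$ (so that $f$ already lies in $W{S_{\mcB}^{s,p}}[\R_I^n]$ by definition of convergence in this space). Let $\varphi \in C_c^\infty[\R_I^n]$ satisfy $\mbox{supp}\,\varphi \subset \R_I^n \setminus F$; since each $f_m$ is supported in $F$, we have $<f_m,\varphi>=0$ for every $m$. By the continuity estimate above, $<f_m,\varphi> \to <f,\varphi>$, whence $<f,\varphi>=0$. As $\varphi$ was an arbitrary test function supported in the open set $\R_I^n \setminus F$, the distribution $f$ vanishes on $\R_I^n \setminus F$, i.e. $\mbox{supp}\,f \subset F$. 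Hence $f \in B^{s,F}$, and $B^{s,F}$ is closed.

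I expect the main obstacle to be the first step, namely justifying the identity $<f,\varphi> = <\mcP^s f, \mcP^{-s}\varphi>_{K{S^2}[\R_I^n]}$ and the resulting continuity of the distributional pairing. This rests on the facts recorded earlier in the text, that $\mcP^s$ is a unitary isomorphism of $W{S_{\mcB}^{s,p}}[\R_I^n]$ onto $K{S^2}[\R_I^n]$ with $(\mcP^s)^{-1}=\mcP^{-s}$ and $\mcP^0$ the identity, and that $\mcP^s$ is self-adjoint for the $K{S^2}$ pairing, so that the relation extends from $\mcS[\R_I^n]$ to $\mcS^{*}[\R_I^n]$ by duality. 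Once this continuity is in place, the remainder is the routine closed-support argument carried out above.
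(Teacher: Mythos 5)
Your proof is correct and takes essentially the same route as the paper's: let a sequence in $B^{s,F}$ converge in norm, test the limit against test functions supported in the open complement of $F$, use the Bessel-potential pairing together with Cauchy--Schwarz in $K{S^2}[\R_I^n]$ to see that the distributional pairing is continuous in the $W{S_{\mcB}^{s,p}}$-norm, and conclude $\mbox{supp}\,f \subset F$. If anything, your write-up is the more careful one: the paper's display moves $\mcP^s$ onto $f-f_i$ without correspondingly placing $\mcP^{-s}$ on the test function, whereas you correctly write $<f,\varphi> = <\mcP^s f, \mcP^{-s}\varphi>_{K{S^2}[\R_I^n]}$, and you also record the (omitted in the paper) verification that $B^{s,F}$ is a linear subspace.
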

\begin{proof}
	Let $\{f_i\}_{i=1}^{\infty} $ be in $B^{s,F}$ converges to $f $ in $W{S_{\mcB}^{s,p}}[\R_I^n]$.\\
	If $\mcF \in \mcD(F^c), $ and let $\widehat{\mcF}$ denote the extension of $\mcF $ to $\mcD[\R_I^n] $ by zero. Then 
	\begin{align*}
	<u_{|F_c}, \mcF> &= < f, \widehat{\mcF}> \\ &= < f-f_i, \widehat{\mcF}> + < f_i, \widehat{\mcF}> \\&=< f- f_i, \widehat{\mcF}> \\&=< \mcP^s f -\mcP^s f_i, \widehat{\mcF}>_{K{S^p}[\R_I^n]}.
	\end{align*}
	Using Cauchy-Schwartz inequality we get $< f_{|F^c}, \widehat{\mcF}>=0 $ for all $ f \in \mcD(F^c)$. This implies $supp f \subset F.$
\end{proof}
\subsection{HK-Sobolev  spaces through Bessel Potential on $\R^\infty$}
 As $S[\R_I^n] \subset S[\R_I^{n+1}]$ so we can define $S[\widehat{\R_I^\infty}]= \bigcup\limits_{n=1}^{\infty}S[\R_I^n]$. 
 \begin{Def}
   \begin{enumerate}
   \item $\mcS[\R_I^\infty]= \{f \in C_c^\infty[\R_I^\infty]~~: \sup|x^\beta \partial^\alpha f | < \infty \} $ for all multi-indices $\alpha, \beta.$
   \item $\mcS^{*}[\R_I^\infty] $ is the set of sequentially continuous functionals on the space $\mcS[\R_I^\infty].$
    \end{enumerate}
   \end{Def}
    \begin{Def}
  For $ s \in \R,$ Bessel potential of order $s $ to be the sequentially continuous bijective linear operator $\mcP^s: \mcS[\R_I^\infty] \to \mcS[\R_I^\infty]$ by $$\mcP^s f(x)= \int_{\R_I^\infty}( 1 +|\xi|^2)^{\frac{s}{2}}\widehat{f}(\xi) e^{i 2\pi \xi.x}d \xi$$ for $ x \in \R_I^\infty$. That is $$\mcF_{x \to \xi}\{\mcP^s f(x)\}=( 1 +|\xi|^2)^{\frac{s}{2}}\widehat{f}(\xi).$$
  \end{Def}
  \begin{Def}
  For $ s \in \R,$ the Sobolev space through Bessel potential is $${W_{\mcB}^{s,p}}[\R_I^\infty]= \{ f \in \mcS^{*}[\R_I^\infty]:~\mcP^s f \in L^2[\R_I^\infty] \}.$$
  \end{Def}
   As $\mcP^s $ is one kind of differential operator of order $s.$\\
  We can find easily  $\mcP^{s+t}= \mcP^s.\mcP^t,~(\mcP^s)^{-1}= \mcP^{-s},~\mcP^0=$ Identity operator. It is found that $<\mcP^s f, g>=<f, \mcP^s g>_{K{S^2}} $ and $<\mcP^s f, g> = < f, \mcP^s g>_{K{S^2}}$ for all $f, g \in \mcS[\R_I^\infty],$ giving a natural extension of Bessel potential to a linear operator $\mcP^s: \mcS^{*} \to \mcS^{*}$ by $<\mcP^s f, \varphi>= < f, \mcP^s \varphi> $ for all $ \varphi \in \mcS[\R_I^\infty]$.\\
  As $ W{S_{\mcB}^{s,p}}[\R_I^n] \subset W{S_{\mcB}^{s,p}}[\R_I^{n+1}]$ Thus we can find $W{S_{\mcB}^{s,p}}[\widehat{\R_I^\infty}]= \bigcup\limits_{n=1}^{\infty} W{S_{\mcB}^{s,p}}[\R_I^n]$. We say that for $1 \leq p < \infty,$ a measurable function $ f \in W{S_{\mcB}^{s,p}}[\R_I^\infty] $ if there exists a Cauchy-sequence $\{f_n\} \subset W{S_{\mcB}^{s,p}}[\widehat{\R_I^\infty}] $ with $f_n \in W{S_{\mcB}^{s,p}}[\R_I^n]$ and $$\lim\limits_{n \to \infty} \D^\alpha f_n(x) = \D^\alpha f(x),~\lambda_\infty-a.e.$$ Using same approach of construction of $W{S^{k,p}}[\R_I^\infty]$  we build up $W{S_{\mcB}^{s,p}}[\R_I^\infty].$ \\
   We define for $ s \in \R,$ $$W{S_{\mcB}^{s,p}}[\R_I^\infty]= \{ f \in \mcS^{*}[\R_I^\infty]:~~\mcP^s f \in K{S^2}[\R_I^\infty] \}.$$
   \begin{thm}
   Let $h \in \mcS^{*}[\R_I^\infty],$ then $h \in A$ there exists $ g \in A $ such that $$< h, \varphi>= < g, \varphi>_{K{S^2}[\R_I^\infty]}  \mbox{~for~all~}\varphi \in \mcS[\R_I^\infty].$$
   \end{thm}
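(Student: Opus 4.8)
The plan is to lift the $\R_I^n$ representation theorem proved above to the infinite-dimensional setting, exploiting the Cauchy-sequence construction by which every object on $\R_I^\infty$ arises as a limit of objects on the finite boxes $\R_I^n$, together with the continuous dense embedding $L^2[\R_I^\infty] \subset K{S^2}[\R_I^\infty]$ recorded in Theorem \ref{th21}. In other words, the representation on $\R_I^\infty$ should reduce to the $L^2$-representation of \cite{WM} composed with the inclusion $L^2 \hookrightarrow K{S^2}$, exactly as in the finite-dimensional case.

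First I would invoke the representation result of \cite{WM} on $\R_I^\infty$: for $h \in \mcS^{*}[\R_I^\infty]$, membership $h \in A$ yields a $g \in A \subset L^2[\R_I^\infty]$ with $< h, \varphi> = < g, \varphi>_{L^2[\R_I^\infty]}$ for all $\varphi \in \mcS[\R_I^\infty]$. This is the infinite-dimensional analogue of the $L^2$-representation used at the outset of the $\R_I^n$ proof.

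Second, I would show that this same $g$ already lies in $K{S^2}[\R_I^\infty]$ and represents $h$ in the $K{S^2}$ pairing. The key estimate, for $f \in L^2[\R_I^\infty]$, is
\begin{align*}
||f||_{K{S^2}[\R_I^\infty]} &= \left[ \sum_{r=1}^{\infty}\tau_r\left|\int_{\R_I^\infty} \zeta_r(x)f(x) d \lambda_\infty(x)\right|^2\right]^{\frac{1}{2}} \\
&\leq \left[ \sum_{r=1}^{\infty}\tau_r \int_{\R_I^\infty} \zeta_r(x)|f(x)|^2 d \lambda_\infty(x)\right]^{\frac{1}{2}} \leq ||f||_{L^2[\R_I^\infty]},
\end{align*}
where one uses $\sum_{r}\tau_r = 1$, the Cauchy--Schwarz inequality, and $0 \leq \zeta_r \leq 1$, each $\zeta_r$ being a characteristic function of a cube $\mathcal{B}_k$. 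Because the integral $\int_{\R_I^\infty}$ is defined (Definition \ref{Def17}) as the limit of the finite-dimensional integrals $\int_{\R_I^n}$, the estimate holds at each finite stage and passes to the limit, so $f \in K{S^2}[\R_I^\infty]$ and $L^2[\R_I^\infty] \hookrightarrow K{S^2}[\R_I^\infty]$ continuously, consistent with Theorem \ref{th21}.

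I expect the main obstacle to be the transition from the $L^2$ pairing to the $K{S^2}$ pairing, since a priori $< g, \varphi>_{L^2}$ and $< g, \varphi>_{K{S^2}}$ are distinct bilinear forms. One must argue, exactly as in the $\R_I^n$ case, that the dense continuous embedding of $L^2[\R_I^\infty]$ into $K{S^2}[\R_I^\infty]$ guarantees that the functional $h$, once represented in $L^2$, retains a representative $g \in A$ in the $K{S^2}$ inner product for every test function $\varphi \in \mcS[\R_I^\infty]$. Once this identification is secured, the conclusion $< h, \varphi> = < g, \varphi>_{K{S^2}[\R_I^\infty]}$ for all $\varphi \in \mcS[\R_I^\infty]$ follows immediately.
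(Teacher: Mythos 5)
Your proposal follows essentially the same route as the paper's own proof: invoke the $L^2[\R_I^\infty]$ representation $< h, \varphi> = < g, \varphi>_{L^2[\R_I^\infty]}$, establish the estimate $||f||_{K{S^2}[\R_I^\infty]} \leq ||f||_{L^2[\R_I^\infty]}$ via Cauchy--Schwarz, $\sum_r \tau_r = 1$ and $0 \leq \zeta_r \leq 1$, and then pass to the $K{S^2}$ pairing. The one obstacle you flag---that $< g, \varphi>_{L^2}$ and $< g, \varphi>_{K{S^2}}$ are distinct bilinear forms, so the transition needs justification---is in fact glossed over in the paper's proof as well (it simply asserts ``Hence we can conclude''), so your treatment is, if anything, more candid about where the argument is thin.
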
 
   \begin{proof}
    For  $h \in \mcS^{*}[\R_I^\infty],$ then $h \in A$ there exists $ g \in A $ such that $$< h, \varphi>= < g, \varphi>_{L^2[\R_I^\infty]} \mbox{~for~all~}\varphi \in \mcS[\R_I^\infty].$$
    Let $ f \in L^2[\R_I^\infty].$ Then
    \begin{align*}
    ||f||_{K{S^2}[\R_I^\infty]} &= \left[ \sum_{r=1}^{\infty}\tau_r\left|\int_{\R_I^\infty} \zeta_r(x)f(x) d \lambda_\infty(x)\right|^2\right]^{\frac{1}{2}}\\& \leq \left[ \sum_{r=1}^{\infty} \tau_r \int_{\R_I^\infty} \zeta_r(x)|f(x)|^2 d \lambda_\infty(x)\right]^{\frac{1}{2}}\\&\leq \sup\left[\int_{\R_I^\infty} \zeta_r(x)|f(x)|^2 d \lambda_\infty(x)\right]^{\frac{1}{2}}\\&\leq ||f||_{L^2}.
    \end{align*}
    Therefore $ f \in K{S^2}[\R_I^\infty]$. Hence we can conclude $$< h, \varphi>= < g, \varphi>_{K{S^2}[\R_I^\infty]}  \mbox{~~for~all~}\varphi \in \mcS[\R_I^\infty].$$
   \end{proof}
  The space   $W{S_{\mcB}^{s,p}}[\R_I^\infty]$ equipped with the inner product $$<f,g>_{W{S_{\mcB}^{s,p}}[\R_I^\infty]}=< \mcP^s f, \mcP^s g>_{K{S^2}[\R_I^\infty]}$$ and the norm induced by $$< f,f>_{W{S_{\mcB}^{s,p}}[\R_I^\infty]}= ||\mcP^s f||_{K{S^2}[\R_I^\infty]}.$$ Bessel Potential $\mcP^s:W{S_{\mcB}^{s,p}}[\R_I^\infty] \to K{S^2}[\R_I^\infty] $ is a unitary isomorphism and in particular $\mcP^0 f= f,$ so $W{S_{\mcB}^{0,p}}[\R_I^\infty] = K{S^2}[\R_I^\infty].$
    \begin{rem}
    $W{S_{\mcB}^{s,p}}[\R_I^\infty]$ is separable Hilbert space.
    \end{rem}
    \begin{thm}\label{th412}
    $W{S_{\mcB}^{s,p}}[\R_I^\infty] $ contains $W_{\mcB}^{s,p}[\R_I^\infty]$ as continuous dense embedding.
    \end{thm}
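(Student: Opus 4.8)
The plan is to prove the statement in two stages: first the continuity of the inclusion by a single norm estimate, exactly as in the finite-dimensional version of this theorem, and then density by lifting that finite-dimensional result through the union--closure construction used throughout the paper (as in Theorems \ref{th21} and \ref{th311}).

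First I would handle continuity. For $f \in W_{\mcB}^{s,p}[\R_I^\infty]$ we have $\mcP^s f \in L^2[\R_I^\infty]$, and applying the inequality $\|g\|_{K{S^2}[\R_I^\infty]}\le \|g\|_{L^2[\R_I^\infty]}$ established in the theorem immediately preceding (with $g=\mcP^s f$) gives $\|f\|_{W{S_{\mcB}^{s,p}}[\R_I^\infty]}=\|\mcP^s f\|_{K{S^2}[\R_I^\infty]}\le \|\mcP^s f\|_{L^2[\R_I^\infty]}=\|f\|_{W_{\mcB}^{s,p}[\R_I^\infty]}$. Since $L^2[\R_I^\infty]\subset K{S^2}[\R_I^\infty]$, the condition $\mcP^s f\in L^2$ already forces $\mcP^s f\in K{S^2}$, so $W_{\mcB}^{s,p}[\R_I^\infty]\subset W{S_{\mcB}^{s,p}}[\R_I^\infty]$ as sets, and the displayed estimate shows the inclusion map is bounded, hence continuous. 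This mirrors the $\R_I^n$ argument verbatim, with $\R_I^n$ replaced by $\R_I^\infty$ and the finite-dimensional norm inequality replaced by its $\R_I^\infty$ counterpart.

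For density I would use the finite-dimensional statement that $W_{\mcB}^{s,p}[\R_I^n]\subset W{S_{\mcB}^{s,p}}[\R_I^n]$ is a dense embedding for each $n$. Passing to the increasing unions $W_{\mcB}^{s,p}[\widehat{\R_I^\infty}]=\bigcup_{n=1}^{\infty} W_{\mcB}^{s,p}[\R_I^n]$ and $W{S_{\mcB}^{s,p}}[\widehat{\R_I^\infty}]=\bigcup_{n=1}^{\infty} W{S_{\mcB}^{s,p}}[\R_I^n]$, the former is dense in the latter. By construction $W{S_{\mcB}^{s,p}}[\R_I^\infty]$ is the closure of $W{S_{\mcB}^{s,p}}[\widehat{\R_I^\infty}]$, so $W_{\mcB}^{s,p}[\widehat{\R_I^\infty}]$ is dense in $W{S_{\mcB}^{s,p}}[\R_I^\infty]$; since this dense set is contained in $W_{\mcB}^{s,p}[\R_I^\infty]$, the latter is dense in $W{S_{\mcB}^{s,p}}[\R_I^\infty]$ as well. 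Together with the continuity above, this gives the asserted continuous dense embedding.

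The step I expect to be the main obstacle is making the union--closure argument genuinely rigorous rather than merely formal. One must check that the inclusions $W_{\mcB}^{s,p}[\R_I^n]\subset W_{\mcB}^{s,p}[\R_I^{n+1}]$ and $W{S_{\mcB}^{s,p}}[\R_I^n]\subset W{S_{\mcB}^{s,p}}[\R_I^{n+1}]$ are norm-compatible along the nested family and that the Bessel operator $\mcP^s$ commutes with these embeddings, so that the two closures sit inside one another in a way that preserves density. Since $\mcP^s$ is a unitary isomorphism onto $K{S^2}$ (respectively $L^2$) in every dimension and $L^2[\R_I^\infty]\subset K{S^2}[\R_I^\infty]$ continuously, these compatibility conditions hold, and density transfers from the dense unions to their closures exactly as in the proofs of Theorems \ref{th21} and \ref{th311}.
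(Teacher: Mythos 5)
Your proof is correct and follows essentially the same route as the paper: the density part is exactly the paper's union--closure argument, lifting the finite-dimensional dense embedding $W_{\mcB}^{s,p}[\R_I^n]\subset W{S_{\mcB}^{s,p}}[\R_I^n]$ through $\bigcup_{n=1}^{\infty}W_{\mcB}^{s,p}[\R_I^n]$ into the closure $W{S_{\mcB}^{s,p}}[\R_I^\infty]$. Your explicit continuity estimate $\|f\|_{W{S_{\mcB}^{s,p}}[\R_I^\infty]}=\|\mcP^s f\|_{K{S^2}[\R_I^\infty]}\le\|\mcP^s f\|_{L^2[\R_I^\infty]}=\|f\|_{W_{\mcB}^{s,p}[\R_I^\infty]}$ is a small supplement the paper leaves implicit (it writes the analogous estimate only in the $\R_I^n$ case), and it makes the "continuous" part of the claim more complete than the paper's own proof.
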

    \begin{proof}
     Since $W{S_{\mcB}^{s,p}}[\R_I^n] $ contains $W_{\mcB}^{s,p}[\R_I^n] $ as continuous dense embedding. However $W{S_{\mcB}^{s,p}}[\R_I^\infty] $ is the closure of $\bigcup\limits_{n=1}^{\infty}W{S_{\mcB}^{s,p}}[\R_I^\infty].$ It follows $W{S_{\mcB}^{s,p}}[\R_I^\infty]$ contains $\bigcup\limits_{n=1}^{\infty} W_{\mcB}^{s,p}[\R_I^n]$ which is dense in $W_{\mcB}^{s,p}[\R_I^\infty]$ as it is the closure.
    Hence the result.
    \end{proof}
\begin{thm}
\begin{enumerate}
\item $\D[\R_I^\infty] $ is dense in $W{S_{\mcB}^{s,p}}[\R_I^\infty].$
\item $\mcS[\R_I^\infty]$ is dense in $W{S_{\mcB}^{s,p}}[\R_I^\infty]$
\item If $s \leq t$ then $W{S_{\mcB}^{t,p}}[\R_I^\infty] \subset W{S_{\mcB}^{s,p}}[\R_I^\infty]$ and $||f||_{W{S_{\mcB}^{s,p}}} \leq ||f||_{W{S_{\mcB}^{t,p}}}.$
\end{enumerate}
\end{thm}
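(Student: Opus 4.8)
The plan is to obtain all three assertions by the same exhaustion-and-closure mechanism that proved Theorem \ref{th412}, lifting each finite-dimensional statement (already available as the $\R_I^n$ version of the present theorem, established above) to $\R_I^\infty$ through the identity $W{S_{\mcB}^{s,p}}[\R_I^\infty]=\overline{\bigcup_{n=1}^{\infty}W{S_{\mcB}^{s,p}}[\R_I^n]}$. Throughout I would use that $\mcP^s$ intertwines the Bessel norm with the $K{S^2}$ norm, so that $\|f\|_{W{S_{\mcB}^{s,p}}[\R_I^\infty]}=\|\mcP^s f\|_{K{S^2}[\R_I^\infty]}$, together with the continuous dense embedding $W_{\mcB}^{s,p}[\R_I^\infty]\hookrightarrow W{S_{\mcB}^{s,p}}[\R_I^\infty]$ supplied by Theorem \ref{th412}.

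For parts (1) and (2) I would argue by transitivity of density. Since $\D[\R_I^\infty]\subseteq\mcS[\R_I^\infty]\subseteq W{S_{\mcB}^{s,p}}[\R_I^\infty]$, it suffices to prove density of the smaller space $\D[\R_I^\infty]$, after which (2) is immediate. Density of $\D[\R_I^n]$ in $W{S_{\mcB}^{s,p}}[\R_I^n]$ is exactly the finite-dimensional case; taking unions, $\D[\widehat{\R_I^\infty}]=\bigcup_n\D[\R_I^n]$ is dense in $\bigcup_n W{S_{\mcB}^{s,p}}[\R_I^n]$ and hence in its closure $W{S_{\mcB}^{s,p}}[\R_I^\infty]$. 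Equivalently, one may combine the classical density of $\D[\R_I^\infty]$ and $\mcS[\R_I^\infty]$ in $W_{\mcB}^{s,p}[\R_I^\infty]$ with the dense continuous embedding of Theorem \ref{th412}: the composition of a dense inclusion with a continuous dense embedding is again dense, which settles both statements.

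For part (3) I would record the inclusion and inequality at finite level and transport them to $\R_I^\infty$. At the level of $\R_I^n$ the inequality rests on writing $\mcP^s=\mcP^{s-t}\mcP^t$ and noting that, since $s\le t$ and $1+|\xi|^2\ge 1$, the multiplier satisfies $(1+|\xi|^2)^{(s-t)/2}\le 1$, so that $\|\mcP^s f\|_{K{S^2}}\le\|\mcP^t f\|_{K{S^2}}$, i.e. $\|f\|_{W{S_{\mcB}^{s,p}}}\le\|f\|_{W{S_{\mcB}^{t,p}}}$, and in particular $W{S_{\mcB}^{t,p}}[\R_I^n]\subseteq W{S_{\mcB}^{s,p}}[\R_I^n]$. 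Taking unions gives $W{S_{\mcB}^{t,p}}[\widehat{\R_I^\infty}]\subseteq W{S_{\mcB}^{s,p}}[\widehat{\R_I^\infty}]$, and passing to closures yields the inclusion on $\R_I^\infty$. For the norm bound, given $f\in W{S_{\mcB}^{t,p}}[\R_I^\infty]$ I would choose the defining Cauchy sequence $f_n\in W{S_{\mcB}^{t,p}}[\R_I^n]$; the finite-level bound $\|f_n-f_m\|_{W{S_{\mcB}^{s,p}}}\le\|f_n-f_m\|_{W{S_{\mcB}^{t,p}}}$ shows $\{f_n\}$ is also Cauchy in $W{S_{\mcB}^{s,p}}[\R_I^\infty]$, its limit there coincides with $f$ by uniqueness of almost-everywhere limits, and letting $n\to\infty$ in $\|f_n\|_{W{S_{\mcB}^{s,p}}}\le\|f_n\|_{W{S_{\mcB}^{t,p}}}$ delivers the claim.

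The main obstacle I anticipate is not the soft functional-analytic bookkeeping but the finite-level inequality itself. The $K{S^2}$ norm is $\sum_r\tau_r\bigl|\int\zeta_r(\cdot)\,d\lambda_\infty\bigr|^2$, so the comparison $\|\mcP^s f\|_{K{S^2}}\le\|\mcP^t f\|_{K{S^2}}$ really requires a per-functional estimate of the form $\bigl|\int\zeta_r\,\mcP^{s-t}g\bigr|\le\bigl|\int\zeta_r\,g\bigr|$, and this does \emph{not} follow merely from the fact that $\mcP^{s-t}$ is an $L^2$-contraction via Plancherel. The safe route is therefore to confine the Fourier-multiplier estimate to each $\R_I^n$, where $\|\cdot\|_{K{S^2}}\le\|\cdot\|_{L^2}$ and Plancherel are available and where the special structure of the $\zeta_r$ as characteristic functions of cubes can be exploited, and then to rely solely on the closure construction for the extension, so that no genuinely infinite-dimensional Fourier analysis on $\R_I^\infty$ is invoked.
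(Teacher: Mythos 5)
Your overall mechanism is exactly the paper's: the paper's entire proof of this theorem is the single line ``using the similar approach of the proof of Theorem \ref{th412}'', i.e.\ take the finite-dimensional statement on each $\R_I^n$, pass to $\bigcup_n$, and then to the closure $W{S_{\mcB}^{s,p}}[\R_I^\infty]$. Your treatment of (1) and (2) by transitivity of density (classical density of $\D$ and $\mcS$, composed with the continuous dense embedding of Theorem \ref{th412}) reproduces verbatim the paper's own proof of the finite-dimensional version, and your Cauchy-sequence bookkeeping for lifting (3) is in fact more careful than anything written in the paper. So for (1) and (2) you are correct and aligned with the paper.

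The problem is part (3), and here your own diagnosis is right but your proposed repair does not work. The finite-level inequality $\|\mcP^s f\|_{K{S^2}[\R_I^n]}\le\|\mcP^t f\|_{K{S^2}[\R_I^n]}$ is the entire content of (3), and it is not obtainable by the route you call ``safe'': on $\R_I^n$ the available estimates give
\begin{equation*}
\|\mcP^s f\|_{K{S^2}[\R_I^n]}\;\le\;\|\mcP^{s-t}(\mcP^t f)\|_{L^2[\R_I^n]}\;\le\;\|\mcP^t f\|_{L^2[\R_I^n]},
\end{equation*}
which bounds the left side by the \emph{wrong} (strictly stronger) norm; since $\|\cdot\|_{K{S^2}}\le\|\cdot\|_{L^2}$ goes only one way, this can never be converted into a bound by $\|\mcP^t f\|_{K{S^2}}$. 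Nor does ``exploiting the structure of the $\zeta_r$'' obviously rescue it: writing $F_r(\mcP^{s-t}g)=\int(\mcP^{s-t}\zeta_r)\,g\,d\lambda_\infty$ replaces the cube $\mcB_r$ by a smeared bump, and for oscillatory or sharply localized $g$ (e.g.\ $g=\phi'$ with $\phi$ a narrow bump placed near a cube endpoint) the functionals of $\mcP^{s-t}g$ can dominate the weighted functionals of $g$, so no per-functional or summed comparison of the required form holds in general. You should be aware that the paper is in no better shape: its finite-dimensional part (3) is ``proved'' by the phrase ``similar arguments like part (1)'', i.e.\ by analogy with a density argument that cannot produce a norm inequality. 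So your attempt correctly isolates the real obstruction, matches the paper everywhere the paper actually has an argument, but — as a self-contained proof — part (3) remains a genuine gap, in your write-up and in the paper alike.
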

\begin{proof}
 Using the similar approach of the proof of  Theorem \ref{th412}, then we get the results.
\end{proof}
\begin{rem}
 In the space $W{S_{\mcB}^{s,p}}[\R_I^\infty] $ also we get similar type of results like Theorem \ref{th46} and Theorem \ref{th49}.
\end{rem}
\section{Application of $W{S^{k,p}}[\R_I^n]$ }
In this section we will find sufficient condition for the solvability of the divergence equation $\nabla.F= f,$ for $f$ is an element of the subspace $K{S^p}[\R_I^n]$ and $m \in \N$, in the HK-Sobolev  space $W{S^{k,p}}[\R_I^n] $,  with the help of Fourier transformation.\\
Recalling $L^1[\R_I^n] \subset K{S^p}[\R_I^n]$ and the second dual of $\{L^1[\R_I^n]\}^{**} = \mathfrak{M}[\R_I^n] \subset K{S^p}[\R_I^n],$ where $\mathfrak{M}[\R_I^n]$ is the space of bounded finitely additive set functions defined on the Borel sets $\mcB[\R_I^n].$\\ Let us define the Fourier transformation on $K{S^p}[\R_I^n] $ by
 $$\mathfrak{f}(f) = \widehat{f}(y)= \int_{\R_I^n} exp\{-2 \pi i < x, y>\} f(x)d \lambda_\infty(x),$$ where $ x \in K{S^p}, y \in K{S^q}$ and $< x, y> $ is the pairing between $K{S^p} $ and $K{S^q}.$\\
It is well known that Schwartz space $\mcS[\R_I^n] $ of test functions is included in $K{S^p}[\R_I^n].$ The restriction of $\mathfrak{f}$ to $\mcS[\R_I^n] $ has an extension by duality to the space $\mcS^{'}[\R_I^n]$ of tempered distribution on $\R_I^n, $ which is a linear operator called Fourier transform and denoted by $\mathit{f} $ of $\mathfrak{f} $ to $\widehat{\mathfrak{f}}.$
\begin{prop}
	\begin{enumerate}
		\item $\mcS^{.}[\R_I^n]$ is contained in $K{S^p}[\R_I^n],~1 \leq p \leq \infty $ and all bounded random measure on $\R_I^n.$
		\item For $ 1 \leq p \leq 2,`\mathfrak{f}$ applies $K{S^p}[\R_I^n] $ into $K{S^q}[\R_I^n] $ and there exists a positive real number $M_p$ such that $||\widehat{\mathfrak{f}}||_p \leq M_p||\mathfrak{f}||_p,~\mathfrak{f} \in K{S^p}[\R_I^n].$
		\item $\mathfrak{f}$ applies $K{S^2}[\R_I^n] $ onto itself and $||\widehat{\mathfrak{f}}||_2= ||\mathfrak{f}||_2,~~\mathfrak{f} \in K{S^2}[\R_I^n].$
	\end{enumerate}
\end{prop}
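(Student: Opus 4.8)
The plan is to reduce every assertion to the corresponding classical statement on $L^p[\R_I^n]$ — available by the transfer principle recorded in the remark following the $L^p$-theory — and then to push it across the continuous dense embeddings $L^q[\R_I^n]\hookrightarrow K{S^p}[\R_I^n]$ and the duality $K{S^p}[\R_I^n]^{*}=K{S^q}[\R_I^n]$ of Theorem \ref{th25}. For part (1), every $\varphi\in\mcS[\R_I^n]$ decays rapidly, so $|\varphi|^p$ is integrable and $\varphi\in L^p[\R_I^n]$ for each $p$; since $L^p[\R_I^n]$ sits continuously and densely in $K{S^p}[\R_I^n]$, we get $\mcS[\R_I^n]\ci K{S^p}[\R_I^n]$ for $1\le p\le\infty$. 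The statement for bounded random measures is read off from the identification $\{L^1[\R_I^n]\}^{**}=\mfM[\R_I^n]\ci K{S^p}[\R_I^n]$ recalled at the start of the application section: such a measure acts on $K{S^q}$ test functions and hence already lives in $K{S^p}[\R_I^n]$.

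For parts (2) and (3) the essential device is the multiplication (Parseval) formula, valid on $L^1\cap L^2$ and therefore on each generating cube indicator $\zeta_r=\chi_{\mcB_k}$:
$$\int_{\R_I^n}\zeta_r(y)\,\widehat f(y)\,d\la_\infty(y)=\int_{\R_I^n}\widehat{\zeta_r}(x)\,f(x)\,d\la_\infty(x).$$
Thus each coordinate functional defining the $K{S^q}$-norm of $\widehat f$ is the pairing of $f$ against the \emph{fixed} transformed test function $\widehat{\zeta_r}$. I would first establish the inequalities on the dense subspace $L^p[\R_I^n]$, where the classical Hausdorff--Young bound $\|\widehat f\|_{L^q}\le C_p\|f\|_{L^p}$ for $1\le p\le 2$ and the Plancherel identity $\|\widehat f\|_{L^2}=\|f\|_{L^2}$ hold by transfer, and then extend to all of $K{S^p}[\R_I^n]$ by continuity, using density of $L^p$ and boundedness of $\mathfrak f$. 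Surjectivity in part (3) would follow from surjectivity of the Fourier transform on $L^2[\R_I^n]$ together with density of $L^2$ in $K{S^2}[\R_I^n]$ and the open mapping theorem.

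The delicate point — and the step I expect to be the \textbf{main obstacle} — is upgrading $L$-norm control to genuine $K{S}$-norm control, i.e. replacing the cheap estimate $\|\widehat f\|_{K{S^q}}\le C_q\|\widehat f\|_{L^q}\le C_q C_p\|f\|_{L^p}$, which only sees the $L^p$ norm of $f$, by a bound of the form $\|\widehat f\|_{K{S^q}}\le M_p\|f\|_{K{S^p}}$ phrased through the weaker $K{S^p}$ norm. By the Parseval formula this amounts to comparing the $\ell^{q}$-functional $r\mapsto\tau_r^{1/q}\langle\widehat{\zeta_r},f\rangle$ with the $\ell^{p}$-functional $r\mapsto\tau_r^{1/p}\langle\zeta_r,f\rangle$ that defines $\|f\|_{K{S^p}}$; since the Fourier transform does not preserve the generating family (the $\widehat{\zeta_r}$ are sinc-type kernels, not cube indicators), this comparison must be carried out quantitatively rather than invoked. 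For part (3) the cleanest route is to argue that $\{\widehat{\zeta_r}\}$ is itself an admissible system of duality functionals — the Fourier transform being unitary for the $L^2$ inner products underlying the $K{S^2}$ construction — so that the quadratic form $\sum_{r}\tau_r|\langle\widehat{\zeta_r},f\rangle|^2$ reproduces $\|f\|_{K{S^2}}^2$; making that identification rigorous is where the real work lies.
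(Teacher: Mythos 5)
The first thing to say is that the paper offers \emph{no proof at all} of this proposition --- it is stated as recalled background (in the spirit of the Gill--Zachary $K{S^p}$ theory) and the text moves straight on to Radon measures --- so there is no paper argument to compare yours against; your proposal must stand on its own. Part (1) of your proposal does stand: rapid decay gives $\mcS[\R_I^n]\ci L^p[\R_I^n]$ for every $p$, and the continuous dense embedding $L^p[\R_I^n]\ci K{S^p}[\R_I^n]$, together with the identification $\{L^1[\R_I^n]\}^{**}=\mfM[\R_I^n]\ci K{S^p}[\R_I^n]$ recalled just before the proposition, settles both halves of that claim.

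Parts (2) and (3), however, are not proved in your text; they are only planned, and you say so yourself (``this comparison must be carried out quantitatively rather than invoked'', ``making that identification rigorous is where the real work lies''). The obstacle you flag is indeed fatal to the extension-by-continuity scheme: Hausdorff--Young gives $\|\widehat f\|_{K{S^q}}\le C\|\widehat f\|_{L^q}\le C'\|f\|_{L^p}$ on the dense subspace $L^p[\R_I^n]$, but since $\|\cdot\|_{K{S^p}}$ is strictly weaker than $\|\cdot\|_{L^p}$, this is not a bound of the form $M_p\|f\|_{K{S^p}}$, and without $K{S^p}$-to-$K{S^q}$ boundedness on a dense subspace there is no continuous extension to all of $K{S^p}[\R_I^n]$. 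Worse, your proposed repair of (3) is circular: by the very Parseval identity you invoke, $\sum_{r}\tau_r|\langle\widehat{\zeta_r},f\rangle|^2=\sum_{r}\tau_r|\langle\zeta_r,\widehat f\rangle|^2=\|\widehat f\|_{K{S^2}}^2$, so the assertion that the family $\{\widehat{\zeta_r}\}$ ``reproduces $\|f\|_{K{S^2}}^2$'' is word-for-word the isometry $\|\widehat f\|_{K{S^2}}=\|f\|_{K{S^2}}$ you are trying to prove, not a lemma from which it follows. Nor does $L^2$-unitarity of $\mathfrak{f}$ transfer automatically: the $K{S^2}$ inner product is not the $L^2$ inner product but is built from the fixed countable family of cube indicators $\{\zeta_r\}$ and weights $\tau_r$, and $\mathfrak{f}$ carries this family to sinc-type kernels, which is a genuinely different generating system. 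So what you have is a correct proof of (1), a correct diagnosis of where the difficulty in (2)--(3) lies, and no argument that overcomes it; the substantive claims of the proposition remain unestablished in your write-up.
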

Let $\mu_\infty$ be a fixed bounded Radon measure on $\R_I^n.$ For any real number $t>0 $ and any complex function $f $ on $\R_I^n$ which is  continuous  and with compact support, we set 
\begin{align*}
\mu_{\infty_t(f)}&= \int_{\R_I^n} f(x)d \mu_{\infty_t(x)}\\&= \int_{\R_I^n} f(tx)d \mu_\infty(x).
\end{align*}
\begin{prop}
	Suppose $1 \leq p \leq \infty.$ For any real number $t>0 $ and any element $f $ of $K{S^p}[\R_I^n]$ the function $M_{\mu_\infty}^{t}(f)$ is defined by 
	\begin{align*}
	M_{\mu_\infty}^{t} f(x)&= \mu_{\infty_t} * f(x)\\&= \int_{\R_I^n} f(x-ty) d \mu_\infty(y)
	\end{align*}
	represents an element of $K{S^p}[\R_I^n]$ such that $||M_{\mu_\infty}^{t} f|| \leq C|\mu_\infty|[\R_I^n] ||f||,$ where $C$ is a real number not depending on $(f,t).$ \\
	If $\int_{\R_I^n} d \mu_\infty(x)=1 $ and $ f \in K{S^p}[\R_I^n]$ with $ 1 \leq p \leq \infty$ then for any real number $t>0,~M_{\mu_\infty}^{t} f(x) $ as a mean of $f $ on the subset $x-t\mbox{supp}(\mu_\infty) $ of $\R_I^n,$ where $\mbox{supp}(\mu_\infty) $ is the support of $\mu_\infty.$ Let us assume there exists a HK-integrable function $\Phi$ on $\R_I^n$ such that $\int_{\R_I^n} \Phi(x) dx =1 $ and $d \mu_\infty(x)= \Phi(x) dx ,$ we obtain for any element $f $ of $K{S^p}[\R_I^n]$ and for almost every element $ x $ of $\R_I^n$ as
	\begin{align*}
	\Phi_t * f(x) &= \int_{\R_I^n} f(x-y) \Phi_t(y) dy\\&= \int_{\R_I^n} f(x-ty) \Phi(y) dy \\&= M_{\mu_\infty}^{t} f(x).
	\end{align*}
\end{prop}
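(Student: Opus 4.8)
The plan is to prove the norm estimate first, since membership of $M_{\mu_\infty}^t f$ in $K{S^p}[\R_I^n]$ then follows from density and completeness. Recall that the $K{S^p}$-norm is read off from the fixed functionals $F_r(g)=\int_{\R_I^n}\zeta_r(x)g(x)\,d\lambda_\infty(x)$, so the first step is to evaluate $F_r(M_{\mu_\infty}^t f)$. Since each $\zeta_r$ is the characteristic function of a cube $\mathcal{B}_r$ with sides parallel to the axes, the iterated integral $\int\int \zeta_r(x)f(x-ty)\,d\mu_\infty(y)\,d\lambda_\infty(x)$ is well defined for HK-integrable $f$, and a Fubini--Tonelli interchange gives $F_r(M_{\mu_\infty}^t f)=\int_{\R_I^n} G_r(y)\,d\mu_\infty(y)$, where $G_r(y)=\int_{\R_I^n}\zeta_r(z+ty)f(z)\,d\lambda_\infty(z)$ after the substitution $z=x-ty$, which is legitimate by the translational invariance of $\lambda_\infty$. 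Note that $|G_r(y)|=\big|\int_{\mathcal{B}_r-ty}f\,d\lambda_\infty\big|$, the integral of $f$ over a translated cube.

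For the estimate with $1\le p<\infty$, normalise $|\mu_\infty|$ to the probability measure $d\nu=d|\mu_\infty|/|\mu_\infty|[\R_I^n]$ and apply Jensen's inequality to the convex map $u\mapsto u^p$, obtaining $|F_r(M_{\mu_\infty}^t f)|^p\le |\mu_\infty|[\R_I^n]^{p-1}\int_{\R_I^n}|G_r(y)|^p\,d|\mu_\infty|(y)$. Multiplying by $\tau_r$, summing over $r$, and swapping the sum with the $|\mu_\infty|$-integral by Tonelli (all terms nonnegative) yields $||M_{\mu_\infty}^t f||_{K{S^p}}^p\le |\mu_\infty|[\R_I^n]^{p-1}\int_{\R_I^n}\big(\sum_r \tau_r|G_r(y)|^p\big)\,d|\mu_\infty|(y)$. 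The inner sum is exactly $||T_{ty}f||_{K{S^p}}^p$, where $T_a f=f(\cdot-a)$ denotes translation, so the whole estimate collapses onto a single quantity: the operator norm of translations on $K{S^p}[\R_I^n]$. The case $p=\infty$ is identical with the weighted sum replaced by $\sup_r$ and Jensen by the triangle inequality, again reducing to the same translation bound.

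The main obstacle is therefore to show that translations act uniformly boundedly on $K{S^p}$, i.e. $||T_a f||_{K{S^p}}\le C\,||f||_{K{S^p}}$ with $C$ independent of the shift $a$ and of $f$; granting this, the displays above give $||M_{\mu_\infty}^t f||_{K{S^p}}\le C\,|\mu_\infty|[\R_I^n]\,||f||_{K{S^p}}$ with $C$ independent of $(f,t)$, as claimed. I would establish the translation bound by working on the dense subspace $L^p[\R_I^n]\subset K{S^p}[\R_I^n]$, where $T_a$ is a genuine $L^p$-isometry, and transporting it through the continuous dense embedding $L^p[\R_I^n]\hookrightarrow K{S^p}[\R_I^n]$; the embedding constant comes from $|F_r(g)|\le|\mathcal{B}_r|^{1/q}||g||_{L^p}$ (with $q$ the conjugate exponent) together with the uniformly bounded edge lengths $e_l=\tfrac{1}{2^{l-1}\sqrt n}$. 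The delicate point that keeps $C$ uniform in $a$ is the richness of the cube family $\{\mathcal{B}_k\}$---cubes of every dyadic edge centred at every rational point---which allows each translated cube $\mathcal{B}_r-a$ to be covered by boundedly many members of the family, absorbing the weights $\tau_r$ into the constant.

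Finally, membership follows by density and completeness: for $f\in K{S^p}[\R_I^n]$ choose $f_j\in L^p[\R_I^n]$ with $f_j\to f$, apply the estimate to $f_j-f_k$ to see that $(M_{\mu_\infty}^t f_j)$ is Cauchy, and identify its limit with $M_{\mu_\infty}^t f$ since $K{S^p}[\R_I^n]$ is complete (being the completion of $L^p[\R_I^n]$). For the concluding identity, if $d\mu_\infty(x)=\Phi(x)\,dx$ with $\int_{\R_I^n}\Phi=1$, the change of variables $z=ty$ (so $dy=t^{-n}\,dz$) turns $M_{\mu_\infty}^t f(x)=\int f(x-ty)\Phi(y)\,dy$ into $\int f(x-z)\,t^{-n}\Phi(z/t)\,dz=(\Phi_t*f)(x)$ with $\Phi_t(z)=t^{-n}\Phi(z/t)$, valid for almost every $x$ by the $L^1$- and $K{S^p}$-convolution theory already developed; the interpretation of $M_{\mu_\infty}^t f(x)$ as a weighted mean of $f$ over $x-t\,\mathrm{supp}(\mu_\infty)$ is then immediate from $\int d\mu_\infty=1$.
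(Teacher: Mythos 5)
Your reduction of the proposition to a single lemma is clean and correct as far as it goes: the Fubini--Tonelli interchange, the Jensen step with the normalised measure, and the identification $\sum_r \tau_r |G_r(y)|^p = \|T_{ty} f\|_{K{S^p}}^p$ are all fine, so everything hinges on your claim that translations act uniformly boundedly on $K{S^p}[\R_I^n]$. (For reference, the paper states this proposition with no proof at all, so your proposal has to stand entirely on its own.) That key lemma is exactly where the proposal breaks down, in two ways. First, your proposed proof of it is circular: transporting the $L^p$-isometry through the embedding $L^p[\R_I^n] \hookrightarrow K{S^p}[\R_I^n]$ only yields $\|T_a f\|_{K{S^p}} \le c\,\|T_a f\|_{L^p} = c\,\|f\|_{L^p}$, and there is no reverse inequality $\|f\|_{L^p} \lesssim \|f\|_{K{S^p}}$ --- the embedding constant runs in one direction only, so you never get a bound in terms of $\|f\|_{K{S^p}}$. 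The covering remark does not repair this: covering $\mathcal{B}_r - a$ by family cubes controls nothing for signed $f$, and the weights cannot be ``absorbed into the constant,'' because the family cubes lying near a far-away translate carry arbitrarily small weights.

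Second, and fatally, the lemma is false for $1 \le p < \infty$. Take $g = \chi_{[0,1]^n}$ and $f_R(x) = g(x - Re_1)$, $R \in \N$. With $S_R = \{r : \mathcal{B}_r \cap \mathrm{supp}(f_R) \ne \es\}$ one has $\|f_R\|_{K{S^p}}^p \le \sum_{r \in S_R} \tau_r \to 0$ as $R \to \infty$, since each fixed cube $\mathcal{B}_r$ eventually misses $\mathrm{supp}(f_R)$ and $\sum_r \tau_r < \infty$ (dominated convergence over the index set). Yet $T_{-Re_1} f_R = g$ has a fixed positive $K{S^p}$-norm. Hence $\sup_a \|T_a\|_{K{S^p} \to K{S^p}} = \infty$, and no constant $C$ independent of the shift exists. (Only $p = \infty$ survives: there the norm is a supremum over cubes with rational centres, which by continuity of the indefinite integral equals the supremum over all cubes of dyadic edge, so translation is an isometry.) Note, moreover, that feeding $\mu_\infty = \delta_{e_1}$ --- a legitimate bounded Radon measure, for which $M_{\mu_\infty}^t f = T_{te_1}f$ --- into the statement shows the uniform-in-$t$ estimate of the proposition itself cannot hold for arbitrary $\mu_\infty$ when $p < \infty$. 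Any viable argument must therefore exploit extra structure, such as absolute continuity $d\mu_\infty = \Phi\, d\lambda$, where the smearing $\Phi_t * f$ genuinely tames the $K{S^p}$-norm for large $t$, rather than route through a uniform translation bound.
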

\begin{thm}
	Assume $\int_{\R_I^n} d \mu_\infty(x)=1,~ 1 \leq p \leq \infty$ and $ f \in W{S^{1,p}}[\R_I^n]$ that is $f $ and all its partial derivatives $ \frac{\partial f}{\partial x_i} ( 1 \leq j \leq m)$ are element of $K{S^p}[\R_I^n]$ then $$||f-M_{\mu_\infty}^{t} f|| \leq C[|\mu_\infty|[\R_I^n]]^{\frac{1}{q^{,}}}|||\nabla f|||\left( \sum_{r=1}^{\infty} \tau_r \int_{\R_I^n} \zeta_r(x)|y|^q d |\mu_\infty|(y)\right)^{\frac{1}{q}}t,~t \in (0, \infty),$$ where $|\nabla f|= \left( \sum\limits_{j=1}^{d}|\frac{\partial f}{\partial x_j}|^2\right)^{\frac{1}{2}}$ and $C$ is a positive real number not depending on $(\mu_\infty, f, t).$
\end{thm}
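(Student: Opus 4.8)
The plan is to adapt the classical mollification-error estimate to the $K{S^p}$ setting, representing $f - M_{\mu_\infty}^{t} f$ through the fundamental theorem of calculus. First I would use the normalization $\int_{\R_I^n} d\mu_\infty(y) = 1$, which lets me write $f(x) = \int_{\R_I^n} f(x)\, d\mu_\infty(y)$ and hence
\[
f(x) - M_{\mu_\infty}^{t} f(x) = \int_{\R_I^n} \big[\, f(x) - f(x - t y) \,\big]\, d\mu_\infty(y).
\]
Working first with $f$ smooth (the general case of $f \in W{S^{1,p}}[\R_I^n]$ following from the density of $W^{1,p}[\R_I^n]$ in $W{S^{1,p}}[\R_I^n]$ and the completeness proved earlier), I would apply the fundamental theorem of calculus along the segment joining $x$ to $x - ty$, using $\frac{d}{ds} f(x-sty) = -t\, y\cdot\nabla f(x-sty)$, to obtain $f(x) - f(x-ty) = t \int_0^1 y \cdot \nabla f(x - s t y)\, ds$. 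Substituting gives
\[
f(x) - M_{\mu_\infty}^{t} f(x) = t \int_{\R_I^n} \int_0^1 y \cdot \nabla f(x - s t y)\, ds\, d\mu_\infty(y).
\]

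Next I would take the $K{S^p}[\R_I^n]$ norm. Since $\|\cdot\|_{K{S^p}}$ is a genuine norm, the integral form of Minkowski's inequality applies, and together with the pointwise bound $|y \cdot \nabla f| \le |y|\,|\nabla f|$ it pulls the norm inside the $s$- and $y$-integrals:
\[
\| f - M_{\mu_\infty}^{t} f \|_{K{S^p}} \le t \int_{\R_I^n} \int_0^1 |y|\, \big\| \,|\nabla f(\cdot - s t y)| \,\big\|_{K{S^p}}\, ds\, d|\mu_\infty|(y).
\]
A uniform translation bound $\big\| |\nabla f(\cdot - sty)| \big\|_{K{S^p}} \le C\,\big\| |\nabla f| \big\|_{K{S^p}}$ then lets $\int_0^1 ds = 1$ drop out, and H\"{o}lder's inequality with respect to $d|\mu_\infty|$ and conjugate exponents $q, q'$ (where $\frac{1}{q} + \frac{1}{q'} = 1$) separates the total-mass factor $[\,|\mu_\infty|[\R_I^n]\,]^{1/q'}$ from the $q$-th moment of $|y|$. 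Combined with the linear factor $t$, this yields the asserted estimate.

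The delicate point, and the main obstacle, is that unlike the $L^p$ norm the $K{S^p}$ norm is \emph{not} exactly translation invariant: it is built from pairings $\int_{\R_I^n} \zeta_r(x) f(x)\, d\lambda_\infty(x)$ against the fixed dense family of cube-indicators $\{\zeta_r\}$, which a shift by $sty$ does not preserve. This is exactly why the moment in the statement appears in the $K{S}$-weighted form $\big( \sum_{r=1}^{\infty} \tau_r \int_{\R_I^n} \zeta_r(x) |y|^q\, d|\mu_\infty|(y) \big)^{1/q}$ rather than as a bare moment. I would therefore prefer to run the estimate coordinatewise, bounding each $\int_{\R_I^n}\zeta_r(x)\,[f - M_{\mu_\infty}^{t} f](x)\, d\lambda_\infty(x)$ and interchanging the order of integration by Fubini — legitimate because $|\mu_\infty|$ is finite and $\nabla f \in K{S^p}[\R_I^n]$ — so that the weights $\zeta_r$ and $\tau_r$ are inherited directly by the moment term, the constant $C$ absorbing the residual discrepancy of the shift. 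The remaining ingredients, namely the passage from smooth $f$ to general $f \in W{S^{1,p}}[\R_I^n]$ and the justification of each Fubini interchange, are routine given the density, completeness, and $L^p \hookrightarrow K{S^p}$ domination results already established.
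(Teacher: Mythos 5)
First, context you could not have known: the paper contains \emph{no} proof of this theorem. It is stated bare, sandwiched between other unproved statements imported from the amalgam-space paper of Coulibaly--Fofana \cite{SI}, and the odd moment factor $\sum_{r=1}^{\infty}\tau_r\int_{\R_I^n}\zeta_r(x)|y|^q\,d|\mu_\infty|(y)$ --- in which $x$ is a free variable, so the expression is nothing but $\bigl(\sum_r\tau_r\zeta_r(x)\bigr)\int_{\R_I^n}|y|^q\,d|\mu_\infty|(y)$ --- is a symptom of that unredigested transcription. So your attempt must stand on its own. Its skeleton is the right classical route and surely the intended one: use $\int d\mu_\infty=1$ to write $f-M^t_{\mu_\infty}f$ as an average of differences $f(x)-f(x-ty)$, the fundamental theorem of calculus along the segment, Minkowski's integral inequality for the $KS^p$ norm, and H\"older in $y$ with exponents $q,q'$ to split the mass from the $q$-th moment.

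The gap is the step you flag yourself and then do not close: the uniform bound $\|\,|\nabla f(\cdot-sty)|\,\|_{KS^p}\le C\,\|\,|\nabla f|\,\|_{KS^p}$ with $C$ independent of $s,t,y$. In $L^p$ this is an identity; in $KS^p$ it is an unproved claim, because the norm is a weighted $\ell^p$ sum of pairings with a \emph{fixed} countable family of cubes $\{\zeta_r\}$, and a shift turns each pairing into $\int\zeta_r(x+sty)\,g(x)\,d\lambda_\infty(x)$, a pairing with a cube that is generally not in the family. The obvious domination of a shifted-cube pairing costs $\sup_k\bigl|\int\zeta_k g\bigr|$, and for $p<\infty$ this is \emph{not} controlled by $C\|g\|_{KS^p}$ (one only gets $\bigl|\int\zeta_k g\bigr|\le\tau_k^{-1/p}\|g\|_{KS^p}$, and $\tau_k\to0$). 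Your fallback --- run the estimate coordinatewise and let ``the constant $C$ absorb the residual discrepancy of the shift'' --- repairs nothing: after Fubini and the fundamental theorem of calculus, the $r$-th coordinate contains $\int\zeta_r(x)\,y\cdot\nabla f(x-sty)\,d\lambda_\infty(x)=\int\zeta_r(x+sty)\,y\cdot\nabla f(x)\,d\lambda_\infty(x)$, i.e.\ exactly the same shifted-cube pairing, so the unproved claim reappears verbatim; moreover no Fubini/H\"older bookkeeping actually produces the statement's $\zeta_r$-weighted moment, so that part of your reading is wishful. To complete the argument you would need to first prove a translation (or averaging) estimate on $KS^p$ with a constant uniform in the shift --- a quantitative version of the paper's mollification lemma in Section 2, which the paper also asserts without real proof --- or else settle for the weaker inequality with $\|\,|\nabla f|\,\|_{L^p}$ on the right-hand side, obtained from the classical $L^p$ estimate followed by the embedding $L^p\hookrightarrow KS^p$; that version, however, no longer covers all $f\in WS^{1,p}[\R_I^n]$.
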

Using Theorem of Titchmarsh (see \cite{SI})  we can mention the following theorem.
\begin{thm} 
	If the divergence equation $\nabla . F= f~~(\mcE_f),$ where $(\mcE_f)$ has a solution $F= (\mcF_j)_{ i \leq j \leq d} $ in $KS^p[\R_I^n]$ and $f \in KS^p[\R_I^n]$ and there exists a positive real number $F$ such that the equation $(\mcE_f)$ has a solution in $WS^{k,p}[\R_I^n].$
\end{thm}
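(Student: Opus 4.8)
The plan is to pass to the Fourier side and exhibit an explicit solution as a Fourier multiplier applied to $f$, and then to verify membership in $W{S^{k,p}}[\R_I^n]$ by checking that all weak derivatives up to order $k$ remain in $K{S^p}[\R_I^n]$. First I would apply the Fourier transform $\mathfrak{f}$ to the equation $\nabla\cdot F=f$. By the preceding Proposition, $\mathfrak{f}$ carries $K{S^p}[\R_I^n]$ into $K{S^q}[\R_I^n]$ for $1\leq p\leq 2$ (and is an isometry on $K{S^2}[\R_I^n]$), so both $\widehat{f}$ and the transforms of the components of the given $K{S^p}$-solution are well-defined. Since each $\D_j$ corresponds to multiplication by $2\pi i\,\xi_j$ on the Fourier side, the differential equation $(\mcE_f)$ is converted into the purely algebraic identity
$$2\pi i\sum_{j=1}^{n}\xi_j\,\widehat{\mcF_j}(\xi)=\widehat{f}(\xi).$$

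Next I would construct a canonical solution via the Fourier multiplier
$$\widehat{\mcF_j}(\xi)=\frac{\xi_j}{2\pi i\,|\xi|^2}\,\widehat{f}(\xi),\qquad 1\leq j\leq n,$$
and set $\mcF_j=\mathfrak{f}^{-1}(\widehat{\mcF_j})$; a direct substitution shows $\sum_{j}2\pi i\,\xi_j\,\widehat{\mcF_j}=\widehat{f}$, so $(\mcF_j)$ solves $(\mcE_f)$. To place this solution in $W{S^{k,p}}[\R_I^n]$, I would compute the Fourier symbol of each weak derivative: for a multi-index $\alpha$ with $|\alpha|\leq k$,
$$\widehat{\D^\alpha\mcF_j}(\xi)=(2\pi i\,\xi)^\alpha\,\frac{\xi_j}{2\pi i\,|\xi|^2}\,\widehat{f}(\xi).$$
Comparing with the Bessel-potential symbol $(1+|\xi|^2)^{s/2}$ used earlier, I would factor this as a bounded multiplier times $(1+|\xi|^2)^{k/2}\widehat{f}$ and then invoke the boundedness estimate $||\widehat{\mathfrak{f}}||_p\leq M_p||\mathfrak{f}||_p$ of the Proposition together with the Titchmarsh theorem \cite{SI} to conclude $\D^\alpha\mcF_j\in K{S^p}[\R_I^n]$ for every $|\alpha|\leq k$. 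The positive constant asserted in the statement is then precisely the product of these operator bounds, and finiteness of $||(\mcF_j)||_{W{S^{k,p}}[\R_I^n]}$ follows at once.

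The main obstacle will be the low-frequency singularity of the multiplier $\xi_j/|\xi|^2$ at the origin: the factor $1/|\xi|^2$ fails the needed multiplier estimates near $\xi=0$ unless $\widehat{f}$ vanishes to sufficient order there. The hypothesis that $(\mcE_f)$ already admits a $K{S^p}$-solution is exactly what rescues this, since it forces the compatibility condition $\widehat{f}(0)=0$, equivalently $\int_{\R_I^n}f\,d\lambda_\infty=0$, which renders each $\widehat{\mcF_j}$ regular at the origin. The remaining delicate point is that the $K{S^p}[\R_I^n]$ norm is not a genuine $L^p$ norm but the weighted sequence of pairings against the characteristic functions $\zeta_r$; hence the Fourier-multiplier bounds must be transferred to this non-classical setting. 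This is where the Hausdorff--Young-type inequality of the Proposition and the Titchmarsh theorem do the heavy lifting, allowing the classical multiplier argument on $\R_I^n$ to be read off against the test functionals $\zeta_r$ and summed with the weights $\tau_r$.
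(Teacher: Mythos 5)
Your proposal cannot be matched against an internal argument, because the paper does not actually prove this theorem: its entire proof is the sentence ``For proof of the result, readers can see \cite[Proposition 3.3]{SI}'', i.e.\ everything is outsourced to an amalgam-space result of Coulibaly--Fofana. Judged on its own, your Fourier-multiplier argument contains a genuine gap at its central step. You claim $\D^\alpha\mcF_j\in K{S^p}[\R_I^n]$ for every $|\alpha|\leq k$ by factoring the symbol $(2\pi i\xi)^\alpha\,\xi_j/(2\pi i|\xi|^2)$ as a bounded multiplier times $(1+|\xi|^2)^{k/2}\widehat f(\xi)$. But $(1+|\xi|^2)^{k/2}\widehat f$ is precisely $\widehat{\mcP^k f}$, so this step needs $\mcP^k f$ (equivalently, Bessel--Sobolev regularity of $f$ of order $k$) to lie in a space where the Hausdorff--Young bound applies, whereas the hypothesis supplies only $f\in K{S^p}[\R_I^n]$. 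Solving the divergence equation gains at most one derivative; for $k\geq 2$ your argument silently assumes the regularity of $f$ that the conclusion is supposed to produce, so it is circular and the estimate on $\|(\mcF_j)\|_{W{S^{k,p}}[\R_I^n]}$ does not follow.

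There are further unjustified steps. First, the low-frequency fix: the existence of a $K{S^p}$ solution does not force $\widehat f(0)=0$, because for a general element of $K{S^p}[\R_I^n]$ (a space containing non-absolutely integrable functions and finitely additive set functions) $\widehat f$ need not be a function that is continuous, or even defined pointwise, at the origin; and even granting $\widehat f(0)=0$, vanishing at a single point does not tame the homogeneous degree $-1$ singularity of $\xi_j/|\xi|^2$ in any norm sense. Second, you set $\mcF_j=\mathfrak{f}^{-1}(\widehat{\mcF_j})$, but the paper's Proposition provides invertibility of $\mathfrak{f}$ only on $K{S^2}[\R_I^n]$; for $1\leq p<2$ the map $\mathfrak{f}:K{S^p}\to K{S^q}$ is not stated to be surjective, so the inverse transform is not available. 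Third, the transfer of classical multiplier bounds to the $K{S^p}$ norm --- a weighted sum of pairings against the characteristic functions $\zeta_r$ --- via ``the Titchmarsh theorem'' is asserted rather than carried out; neither the paper nor \cite{SI} supplies a Mikhlin/H\"ormander-type multiplier theorem in these spaces, and this is exactly the technical content such a proof would have to create.
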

\begin{proof}
For proof of the result,  readers can see  \cite[Proposition 3.3]{SI}.
	\end{proof}
\section{Open Problem}
The Rellich-Kondrachov theorem gives us for $\Omega \subset \R_I^n$ is a bounded open set then $$W{S^{1,p}}[\R_I^n] \to W{S^{1, p-1}}[\R_I^n] $$ is compact. This will  help us to study the PDE in our space.\\
We are leaving this paper with an open problem: \\
Is the weak solution of PDE in Sobolev space also the strong solution in HK-Sobolev  Space?
\section{Conclusion} We conclude this paper with weakly differentiable in $L^p[\R_I^n]$ is strongly differentiable in $K{S^p}[\R_I^n]$ and this true when we replace $\R_I^\infty $ by $\R_I^n.$ 
 Weakly differentiable in $ W^{k,p}[\R_I^n]$ is strongly differentiable in $W{S^{k,p}}[\R_I^n]$  and  weakly convergence of $W^{k,p}[\R_I^n]$ and $W^{k,p}[\R_I^\infty]$ are strongly convergence in $W{S^{k,p}}[\R_I^n]$ and $W{S^{k,p}}[\R_I^\infty].$
  \section{Declaration}
\noindent  {\bf Funding:} Not Applicable, the research is not supported by any funding agency.\\
 {\bf Conflict of Interest/Competing interests:} The authors declare that there is no conflicts of interest.\\
 {\bf Availability of data and material:} The article does not contain any data for
 analysis.\\
 {\bf Code Availability:} Not Applicable.\\
 {\bf Author's Contributions:} All the authors have equal contribution for the preparation of the article.

\section*{Acknowledgment:} The authors would like to thank Prof. Tepper L. Gill for his valuable suggestions that improve the presentation of the paper. 
\bibliographystyle{amsalpha}

 \end{document}